\newtheorem{theorem}{Theorem}
\newtheorem{lemma}{Lemma}
\newtheorem{definition}{Definition}
\newtheorem{proposition}{Proposition}
\newtheorem{corollary}{Corollary}
\newtheorem{remark}{Remark}
\newcommand{\boxto}{\ensuremath{%
		\mathrel{\Box\kern-1.5pt\raise1pt\hbox{$\mathord{\rightarrow}$}}}}
\newcommand{\diamondto}{\ensuremath{%
		\mathrel{\Diamond\kern-1.5pt\raise1pt\hbox{$\mathord{\rightarrow}$}}}}
\newcommand{\boxTo}{\ensuremath{%
		\mathrel{\Box\kern-1.5pt\raise1pt\hbox{$\mathord{\Rightarrow}$}}}}
\newcommand{\diamondTo}{\ensuremath{%
		\mathrel{\Diamond\kern-1.5pt\raise1pt\hbox{$\mathord{\Rightarrow}$}}}}
\newcommand*\vDdash{%
	\mathrel{%
		\ooalign{$\vdash$\cr$\vDash$}%
	}%
}
\def\@clipped@vdash{%
	\raise .6ex\hbox{\clipbox{0pt .6ex 0pt .6ex}{$\vdash$}}%
}
\begin{document}
{\selectlanguage{english}
\binoppenalty = 10000 %
\relpenalty   = 10000 %

\pagestyle{headings} \makeatletter
\renewcommand{\@evenhead}{\raisebox{0pt}[\headheight][0pt]{\vbox{\hbox to\textwidth{\thepage\hfill \strut {\small Grigory K. Olkhovikov}}\hrule}}}
\renewcommand{\@oddhead}{\raisebox{0pt}[\headheight][0pt]{\vbox{\hbox to\textwidth{{A basic Nelsonian conditional logic}\hfill \strut\thepage}\hrule}}}
\makeatother

\title{A basic system of paraconsistent Nelsonian logic of conditionals}
\author{Grigory K. Olkhovikov\\ Department of Philosophy I\\ Ruhr University Bochum\\
email: grigory.olkhovikov@\{rub.de, gmail.com\}}
\date{}
\maketitle
\begin{quote}
{\bf Abstract.} We define a Kripke semantics for a conditional logic based on the propositional logic $\mathsf{N4}$, the paraconsistent variant  of Nelson's logic of strong negation; we axiomatize the minimal system induced by this semantics. The resulting logic, which we call $\mathsf{N4CK}$, shows strong connections both with the basic intuitionistic logic of conditionals $\mathsf{IntCK}$ introduced earlier in \cite{olkhovikov} and with the $\mathsf{N4}$-based modal logic $\mathsf{FSK}^d$ introduced in \cite{odintsovwansing} as one of the possible counterparts to the classical modal system $\mathsf{K}$. We map these connections by looking into the embeddings which obtain between the aforementioned systems.
\end{quote}
\begin{quote}{\bf Keywords.} conditional logic, strong negation, paraconsistent logic, strong completeness, modal logic, constructive logic 
\end{quote}

\section{Introduction}
The present paper is devoted to a study of a new system of conditional logic (called $\mathsf{N4CK}$) which conservatively extends the logic $\mathsf{N4}$, the paraconsistent variant of Nelson's logic of strong negation.\footnote{The only difference between $\mathsf{N4}$ and $\mathsf{N3}$, the original version of Nelson's logic of strong negation, is that in $\mathsf{N4}$ the extensions and anti-extensions of propositional letters are no longer required to be disjoint.}  In our opinion, $\mathsf{N4CK}$ can be correctly viewed as the minimal `normal' $\mathsf{N4}$-based system of conditional logic; in this capacity, $\mathsf{N4CK}$ is the right counterpart to such systems as the classical conditional logic $\mathsf{CK}$ and the intuitionistic conditional logic $\mathsf{IntCK}$, introduced in \cite{chellas} and \cite{olkhovikov}, respectively.

The rest of the paper is organized as follows. Section \ref{S:Prel} introduces the notational preliminaries, including the set of notations for different logical systems to be mentioned in the paper. This is followed by Section \ref{S:basis} where we introduce the logic $\mathsf{N4}$, which forms the purely propositional basis of $\mathsf{N4CK}$ together with its closest relatives like the intuitionistic logic $\mathsf{IL}$ and the classical logic $\mathsf{CL}$. We briefly survey the properties of $\mathsf{N4}$, which serves both to make the paper more self-contained and to prepare the reader for the study of $\mathsf{N4CK}$ as many of the properties stated in Section \ref{S:Prel} for $\mathsf{N4}$ will also be demonstrated for $\mathsf{N4CK}$ in the later sections. After that, Section   \ref{S:system} introduces $\mathsf{N4CK}$, the main subject of the paper. More precisely, the system is introduced in Section \ref{sub:lands} by means of its intended Kripke semantics, whereas in Section \ref{sub:axiomatization} we axiomatize the logic. Section \ref{S:other} explores the relation of $\mathsf{N4CK}$ to several other logics; it is sub-divided into Section \ref{sub:conditional} and Section \ref{sub:modal}, where in the former we show that (1) $\mathsf{N4CK}$ yields $\mathsf{CK}$ as soon as its propositional basis is extended from $\mathsf{N4}$ to $\mathsf{CL}$, and (2) that $\mathsf{N4CK}$ is embeddable into the positive fragment of $\mathsf{IntCK}$; the embedding in question extends the well-known embedding of $\mathsf{N4}$ into the positive fragment of  $\mathsf{IL}$. In Section \ref{sub:modal} we argue that the modal counterpart of $\mathsf{N4CK}$ is provided by the $\mathsf{N4}$-based modal logic $\mathsf{FSK}^d$ introduced in \cite{odintsovwansing} and compare the relation between the two logics with the relation that obtains between $\mathsf{IntCK}$ and its modal companion $\mathsf{IK}$.

Finally, in Section \ref{S:conclusion}, we briefly discuss the results of the previous sections, and, after drawing some conclusions, describe several avenues for continuing the research lines presented in the paper. The paper also has several appendices where the reader can find the more technical parts of our reasoning which we include for the sake of completeness.

\section{Preliminaries}\label{S:Prel}
We use this section to fix some notations to be used throughout the paper.

We will use IH as the abbreviation for Induction Hypothesis in the inductive proofs; $\alpha:=\beta$ means that we define $\alpha$ as $\beta$. We will use the usual notations for sets and functions. As for the sets, we will write $X\Subset Y$, iff $X\subseteq Y$ and $X$ is finite. We will understand the natural numbers as the finite von Neumann ordinals; we denote their set by $\omega$. We will extensively use ordered couples of sets which we will also call \textit{bi-sets}. The usual set-theoretic relations and operations on bi-sets will be understood componentwise, so that, e.g. $(X,Y)\subseteq(Z,W)$ means that $X \subseteq Z$ and $Y \subseteq Y$ and similarly in other cases.

Relations will be understood as sets of ordered tuples where the length of the tuple defines the \textit{arity} of the relation. Given binary relations $R \subseteq X \times Y$ and $S\subseteq Y\times Z$, we set $R\circ S:= \{(a,c)\mid\text{ for some }b \in Y,\,(a, b)\in R\,\&\,(b,c)\in S\}$, and $R^{-1}:=\{(b,a)\mid (a,b)\in R\}$.

Functions will be understood as relations with special properties; we will write $f:X \to Y$ to denote a function $f \subseteq X\times Y$ such that its left projection is all of $X$. If  $f: X \to Y$ and $Z\subseteq X$ then we will denote the image of $Z$ under $f$ by $f(Z)$. In view of our previous convention for relations, for any given two functions $f:X\to Y$ and $g:Y\to Z$, we will denote the function $x\mapsto g(f(x))$ by $f\circ g$, even though, in the existing literature, this function is often denoted by $g\circ f$ instead. 

In this paper we will compare several conditional and modal logics to one another, therefore, we will need a notation for languages allowing us to easily switch between different variants of such logics. In general, if $\Omega$ is a set of propositional letters and $f^{i_1}_1,\ldots, f^{i_n}_n$ are connectives, where $n, i_1,\ldots, i_n \in \omega$ and $i_1,\ldots, i_n$ indicate the arities, we will denote by $\mathbb{L}(\Omega, f^{i_1}_1,\ldots, f^{i_n}_n)$ the language that is the smallest set of formulas extending $\Omega$ and closed under the applications of connectives in $\{f^{i_1}_1,\ldots, f^{i_n}_n\}$.

However, within this paper we will confine ourselves to a rather narrow range of instantiations of this general construction. First of all, let $Var:=\{p_i\mid i\in \omega\}$ and $Var':= \{q_i\mid i\in \omega\}$ be disjoint. In this paper, we will not consider any instantiations of $\Omega$ outside the set $\{Var, Var\cup Var'\}$. Furthermore, the main logic that we will consider in this paper, the basic Nelsonian logic of conditionals $\mathsf{N4CK}$, will be seen to conservatively extend the logic $\mathsf{N4}$, the paraconsistent variant of Nelson's logic of strong negation. Therefore, we will mostly consider languages of the form $\mathbb{L}(\Omega, f^{i_1}_1,\ldots, f^{i_n}_n)$ where $\{f^{i_1}_1,\ldots, f^{i_n}_n\}\supseteq \{\wedge^2, \vee^2, \sim^1, \to^2\}$; our basic language will be $\mathcal{L}:= \mathbb{L}(Var, \wedge^2, \vee^2, \sim^1, \to^2)$. We will want to extend $\mathcal{L}$ with some connectives from the set $\{\boxto^2, \diamondto^2, \Box^1, \Diamond^1\}$ putting these additional connectives into the subscript and omitting their arities, so that, for example, $\mathcal{L}_{(\Box,\Diamond)}$ will mean the language $ \mathbb{L}(Var, \wedge^2, \vee^2, \sim^1, \to^2, \Box^1, \Diamond^1)$.

We will sometimes want to omit $\sim^1$ from the set of connectives thus getting the \textit{positive} version of the language, or extend the set of propositional letters from $Var$ to $Var \cup Var'$ thus getting the \textit{extended} version of the language, or both. The first two operations will be expressed by appending the superscripts $+$ and $e$, respectively to the language notation so that, e.g. $\mathcal{L}^{e+}_{\boxto}$ will mean $\mathbb{L}(Var\cup Var', \wedge^2, \vee^2, \to^2, \boxto^2)$. Thus the minimal language that we will consider in this paper is going to be $\mathcal{L}^+$ and every other language mentioned below will extend it. The elements of languages will be called their \textit{formulas}.

Although one and the same logic can often be formulated over different languages, in this paper we will abstract away from such subtleties, and will simply treat a logic as a set $\mathsf{L} \subseteq \mathcal{P}(Lang)\times\mathcal{P}(Lang)$, for some language $Lang$, where $(\Gamma, \Delta)\in \mathsf{L}$ iff $\Delta$ $\mathsf{L}$-follows from $\Gamma$ (we will also denote this by $\Gamma\models_\mathsf{L}\Delta$). We will say that $(\Gamma, \Delta)$ is $\mathsf{L}$-satisfiable iff $(\Gamma, \Delta)\notin \mathsf{L}$. Given a $\phi \in Lang$, $\phi$ is $\mathsf{L}$-valid (we will also write $\phi \in \mathsf{L}$) iff $(\emptyset, \{\phi\}) \in \mathsf{L}$ and $\phi$ is $\mathsf{L}$-satisfiable iff $(\{\phi\},\emptyset) \notin \mathsf{L}$.

Since every language considered in this paper contains $\vee$, we can define the Disjunction Property (DP) for an arbitrary logic already at this introductory stage. We will say that a logic $\mathsf{L}\subseteq \mathcal{P}(Lang)\times\mathcal{P}(Lang)$ has DP iff for all $\phi, \psi \in Lang$, we have $\phi \vee \psi \in \mathsf{L}$ iff at least one of $\phi, \psi$ is in $\mathsf{L}$. In case $Lang$ also includes $\sim$, we can also speak about the Constructible Falsity Property (CFP). A logic $\mathsf{L}\subseteq \mathcal{P}(Lang)\times\mathcal{P}(Lang)$ has CFP iff for all $\phi, \psi \in Lang$, we have $\sim(\phi \wedge \psi) \in \mathsf{L}$ iff at least one of $\sim\phi, \sim\psi$ is in $\mathsf{L}$.

All of the logics that we will consider in this paper will have a common property, namely, that their sets of consequences can be seen as induced by  (1) their intended Kripke semantics, and, on the other hand by (2) their complete Hilbert-style axiomatizations. The first of these circumstances allows us to alternatively conceptualize logics as the values returned by an operator $\mathsf{L}$ applied to a tuple of the form $(Lang, Mod, \triangleright)$, where $Lang$ is the language of $\mathsf{L}$, $Mod$ is a class of Kripke models such that every $\mu \in Mod$ has an underlying set denoted by $\lvert\mu\rvert$, and $\triangleright \subseteq PMod\times Lang$ (where we set $PMod := \{(\mu, w)\mid \mu \in Mod,\,w\in \lvert\mu\rvert\}$) is a satisfaction relation between the \textit{pointed models} based on $Mod$ and the formulas of the language. We will understand the logic $\mathsf{L}(Lang, Mod, \triangleright)$ resulting from this application, as the subset of $\mathcal{P}(Lang)\times\mathcal{P}(Lang)$ such that $(\Gamma, \Delta) \in \mathsf{L}(Lang, Mod, \triangleright)$ iff
for no $(\mu, w)\in PMod$ do we have both $(\forall\phi\in \Gamma)(\mu, w\triangleright\phi)$ and $(\forall\psi\in \Delta)(\mu, w\not\triangleright\phi)$, which we will also express by saying that for no $(\mu, w)\in PMod$ do we have $\mu, w\rhd (\Gamma, \Delta)$. As this never leads to a confusion, we will also write $\mu, w\models_\mathsf{L}\phi$ and $\mu, w\models_\mathsf{L} (\Gamma, \Delta)$ meaning $\mu, w\triangleright\phi$ and $\mu, w\rhd (\Gamma, \Delta)$, respectively.

As for the Hilbert-style systems, all of them will be given by a finite number of axiomatic schemas $\alpha_1,\ldots,\alpha_n$ augmented with a finite number of inference rules $\rho_1,\ldots,\rho_m$, so the most general format sufficient for the present paper is $\Sigma(\alpha_1,\ldots,\alpha_n; \rho_1,\ldots,\rho_m)$. Just as with languages, all of the Hilbert-style systems considered in the present paper, happen to extend a certain minimal system which we will denote by $\mathbb{IL}^+$. We have $\mathbb{IL}^+:= \Sigma(\alpha_1-\alpha_8;\eqref{E:mp})$, where:
\begin{align*}
	&\phi \to(\psi\to\phi)\,(\alpha_1),\quad(\phi\to(\psi\to\chi))\to((\phi\to\psi)\to(\phi\to\chi))\,(\alpha_2),\\
	&\quad(\phi\wedge\psi)\to\phi\,(\alpha_3),\quad(\phi\wedge\psi)\to\psi\,(\alpha_4),\quad \phi\to(\psi\to (\phi\wedge\psi))\,(\alpha_5),\\
	&\quad\phi\to(\phi\vee \psi)\,(\alpha_6),\quad\psi\to(\phi\vee \psi)\,(\alpha_7),\quad (\phi\to\chi)\to((\psi \to \chi)\to ((\phi\vee\psi)\to \chi))\,(\alpha_8)
\end{align*}
and:
\begin{align}
	\text{From }\phi, \phi \to \psi&\text{ infer }\psi\label{E:mp}\tag{MP}
\end{align}
It is therefore important for our purposes to be able to refer to Hilbert-style systems as extensions of other systems. If $\mathbb{S} = \Sigma(\alpha_1,\ldots,\alpha_n; \rho_1,\ldots,\rho_m)$, and $\beta_1,\ldots,\beta_k$ are some new axiomatic schemes and $\sigma_1,\ldots,\sigma_r$ are some new rules, then we will write $\mathbb{S}+(\beta_1,\ldots,\beta_k;\sigma_1,\ldots,\sigma_r)$ to denote the system 
$$
\Sigma(\alpha_1,\ldots,\alpha_n,\beta_1,\ldots,\beta_k;\rho_1,\ldots,\rho_m,\sigma_1,\ldots,\sigma_r).
$$
Axiomatic systems can be viewed as operators generating logics when applied to languages. More precisely, if $\mathbb{S} = \Sigma(\alpha_1,\ldots,\alpha_n; \rho_1,\ldots,\rho_m)$ and $Lang$ is a language then $\mathsf{L} = \mathbb{S}[Lang]$ can be described as follows. We say that a $\phi \in Lang$ is \textit{provable} in $\mathbb{S}$ iff there exists a finite sequence $\psi_1,\ldots,\psi_k$ of formulas in $Lang$ such that every formula in this sequence is either a substitution instance of one of $\alpha_1,\ldots,\alpha_n$ or results from an application of one of $\rho_1,\ldots,\rho_m$ to some earlier formulas in the sequence and $\psi_k = \phi$; we will say that $(\Gamma, \Delta) \in \mathbb{S}[Lang]$ iff $(\Gamma, \Delta) \in \mathcal{P}(Lang)\times\mathcal{P}(Lang)$ and there exists a sequence $\chi_1,\ldots,\chi_r$ of formulas in $Lang$ such that every formula in it is either in $\Gamma$, or is provable in $\mathbb{S}$ or results from an application of \eqref{E:mp} to a pair of earlier formulas in the sequence, and, for some $\theta_1,\ldots,\theta_s\in \Delta$ we have $\chi_r = \theta_1\vee\ldots\vee\theta_s$. This definition makes sense in the context of our paper, since every language that we are going to consider contains $\vee$, and every axiomatic system that we are going to consider contains \eqref{E:mp}. We will also express the fact that  $(\Gamma, \Delta) \in \mathbb{S}[Lang]$ by writing $\Gamma\vdash_\mathbb{S}\Delta$.

Note that it follows from this definition that $s > 0$; therefore, $(\Gamma, \Delta) \in \mathbb{S}[Lang]$ implies that $\Delta \neq \emptyset$. This circumstance reveals a subtle gap between a logic $\mathsf{L}$ (over some language $Lang$) that is given by its Kripke semantics and a Hilbert-style system $\mathbb{S}$ that purports to capture $\mathsf{L}$. This gap, however, does not arise for $\mathsf{N4}$, our basic propositional logic, since, as we will see shortly, $\mathsf{N4}$ contains no pair of the form $(\Gamma, \emptyset)$. As for the other logics, we will use the method that works well for both intuitionistic and classical logic: $\mathbb{S}^\iota[Lang]:= \mathbb{S}[Lang]\cup \{(\Gamma, \emptyset)\mid (\Gamma, \{p_1\wedge\sim p_1\})\in \mathbb{S}[Lang])\}$.
  
Finally, we will also speak of the rules \textit{derivable} in a logic generated by an axiomatic system. If $\Gamma\cup \{\phi\}\Subset Lang$, then we will say that $\phi$ is derivable from $\Gamma$ in $\mathbb{S}[Lang]$ (and will write $\Gamma\vDdash_\mathbb{S}\phi$) iff there exists a finite sequence $\psi_1,\ldots,\psi_k$ of formulas in $Lang$ such that every formula in this sequence is either in $\Gamma$, or is provable in $\mathbb{S}$, or results from an application of one of $\rho_1,\ldots,\rho_m$ to some earlier formulas in the sequence, and $\psi_k = \phi$. It is easy to see that, for a $\mathsf{L} = \mathbb{S}[Lang]$ and a $\phi \in Lang$, we will have $\phi \in \mathsf{L}$ iff $\vdash_\mathbb{S}\phi$ iff $\vDdash_\mathbb{S}\phi$.

\section{Propositional basis and its logics}\label{S:basis}
Before we introduce the basic paraconsistent Nelsonian logic of conditionals, we recall some facts about the logic $\mathsf{N4}$, the paraconsistent version of Nelson's propositional logic of strong negation. We start by describing its Kripke semantics:
\begin{definition}\label{D:n4-model}
	A \textit{Nelsonian model} is a structure of the form $\mathcal{M} = (W, \leq, V^+, V^-)$, where $W \neq \emptyset$ is a set of worlds, $\leq$ is a pre-order (i.e., a reflexive and transitive relation) on $W$, and, for a $\star \in \{+, -\}$,  $V^\star:Var\to \mathcal{P}(W)$ is such that
	\begin{equation}\label{Cond:mon}\tag{mon}
	(\forall p \in Var)(\forall w,v \in W)((w\mathrel{\leq}v\,\&\,w \in V^\star(p))\Rightarrow  v \in V^\star(p))	
	\end{equation} 
A Nelsonian model is \textit{extended} iff $V^\star:Var\cup Var'\to \mathcal{P}(W)$ for every $\star \in \{+, -\}$.
\end{definition}
We will denote the class of all Nelsonian models by $Nel$. An (extended) intuitionistic model is any structure obtained from an (extended) Nelsonian model $\mathcal{M} = (W, \leq, V^+, V^-)$ by omitting $V^-$, its last element. A classical model is an intuitionistic model $\mathcal{M} = (W, \leq, V)$ where $W$ is a singleton. We denote the classes of (extended) intuituionistic (resp. classical) models by $Int$ (resp. $Int^e$, $Cl$).

For the logic $\mathsf{N4}$, we consider two satisfaction relations instead of one; we denote these relations by $\models^+_{\mathsf{N4}}$ and $\models^-_{\mathsf{N4}}$, respectively, and interpret them as representing verification and falsifcation of formulas at a given pointed Nelsonian model. They are defined by the following induction on the construction of a formula in $\mathcal{L}$:
\begin{align}
	\mathcal{M}, w&\models_{\mathsf{N4}}^\star p \text{ iff } w \in V^\star(p)\qquad\qquad\qquad\qquad\text{for $p$ atomic, $\star \in \{ +, -\}$}\label{Cl:atom}\tag{atom}\\
	\mathcal{M}, w&\models_{\mathsf{N4}}^+ \psi \wedge \chi \text{ iff } \mathcal{M}, w\models_{\mathsf{N4}}^+ \psi\text{ and }\mathcal{M}, w\models_{\mathsf{N4}}^+ \chi\label{Cl:con+}\tag{$\wedge+$}\\
	\mathcal{M}, w&\models_{\mathsf{N4}}^- \psi \wedge \chi \text{ iff } \mathcal{M}, w\models_{\mathsf{N4}}^- \psi\text{ or }\mathcal{M}, w\models_{\mathsf{N4}}^- \chi\label{Cl:con-}\tag{$\wedge-$}\\
	\mathcal{M}, w&\models_{\mathsf{N4}}^+ \psi \vee \chi \text{ iff } \mathcal{M}, w\models_{\mathsf{N4}}^+ \psi\text{ or }\mathcal{M}, w\models_{\mathsf{N4}}^+ \chi\label{Cl:dis+}\tag{$\vee+$}\\
	\mathcal{M}, w&\models_{\mathsf{N4}}^- \psi \vee \chi\text{ iff } \mathcal{M}, w\models_{\mathsf{N4}}^- \psi\text{ and }\mathcal{M}, w\models_{\mathsf{N4}}^- \chi\label{Cl:dis-}\tag{$\vee-$}\\
	\mathcal{M}, w&\models_{\mathsf{N4}}^+ \sim\psi \text{ iff } \mathcal{M}, w\models_{\mathsf{N4}}^- \psi\label{Cl:neg+}\tag{$\sim+$}\\
	\mathcal{M}, w&\models_{\mathsf{N4}}^- \sim\psi\text{ iff } \mathcal{M}, w\models_{\mathsf{N4}}^+ \psi\label{Cl:neg-}\tag{$\sim-$}\\
	\mathcal{M}, w&\models_{\mathsf{N4}}^+ \psi \to \chi \text{ iff } (\forall v \geq w)(\mathcal{M}, v\models_{\mathsf{N4}}^+ \psi\text{ implies }\mathcal{M}, v\models_{\mathsf{N4}}^+ \chi)\label{Cl:im+}\tag{$\to+$}\\
	\mathcal{M}, w&\models_{\mathsf{N4}}^- \psi \to \chi \text{ iff } \mathcal{M}, w\models_{\mathsf{N4}}^+ \psi\text{ and }\mathcal{M}, w\models_{\mathsf{N4}}^- \chi\label{Cl:im-}\tag{$\to-$}	
\end{align}
Even though $\mathsf{N4}$ has two satisfaction relations, the presence of the strong negation $\sim$ allows to reflect, as it were, the falsifications given by $\models^-_{\mathsf{N4}}$ within the structure of $\models^+_{\mathsf{N4}}$ identifying them with the verifications of negated formulas. Thus the usual definition of $\mathsf{N4}$ in terms of its Kripke semantics prioritizes $\models^+_{\mathsf{N4}}$ and, presented in the terminology introduced in the previous section, looks as follows:
$$
\mathsf{N4}:= \mathsf{L}(\mathcal{L}, Nel, \models^+_{\mathsf{N4}})
$$
Note that every subset of $\mathcal{L}$ is satisfiable in $\mathsf{N4}$; in other words:
\begin{proposition}\label{P:n4-satisfiable}
For every $\Gamma \subseteq \mathcal{L}$, $(\Gamma, \emptyset)\notin \mathsf{N4}$.
\end{proposition}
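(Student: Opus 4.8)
The plan is to unfold the definition of $\mathsf{N4} = \mathsf{L}(\mathcal{L}, Nel, \models^+_{\mathsf{N4}})$ and simply exhibit one pointed Nelsonian model that verifies \emph{every} formula of $\mathcal{L}$ at once. Since in a pair $(\Gamma,\emptyset)$ the right-hand component $\Delta=\emptyset$ imposes no constraint whatsoever, such a model is a single witness showing $(\Gamma,\emptyset)\notin\mathsf{N4}$ uniformly for all $\Gamma\subseteq\mathcal{L}$.

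Concretely, I would take the one-world model $\mathcal{M} = (\{w\}, \{(w,w)\}, V^+, V^-)$ with $V^+(p) = V^-(p) = \{w\}$ for every $p \in Var$. The relation $\{(w,w)\}$ is clearly a pre-order, and condition \eqref{Cond:mon} holds trivially as $w$ is the only world, so $\mathcal{M} \in Nel$. I would then prove, by induction on the construction of $\phi\in\mathcal{L}$, the strengthened claim that $\mathcal{M}, w\models^+_{\mathsf{N4}}\phi$ \emph{and} $\mathcal{M}, w\models^-_{\mathsf{N4}}\phi$. The atomic case is immediate from the choice of $V^+,V^-$. For $\wedge$ and $\vee$, the clauses \eqref{Cl:con+}--\eqref{Cl:dis-} reduce both statements about $\psi\wedge\chi$ and $\psi\vee\chi$ to the corresponding statements about $\psi$ and $\chi$, supplied by the IH (the $\wedge+$ and $\vee-$ cases use both components). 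For $\sim\psi$, clauses \eqref{Cl:neg+} and \eqref{Cl:neg-} turn $\models^+$ and $\models^-$ of $\sim\psi$ into $\models^-$ and $\models^+$ of $\psi$, both given by the IH. For $\psi\to\chi$, the only $v\geq w$ is $w$ itself, so \eqref{Cl:im+} holds because $\mathcal{M}, w\models^+_{\mathsf{N4}}\chi$ by the IH, and \eqref{Cl:im-} holds because $\mathcal{M}, w\models^+_{\mathsf{N4}}\psi$ and $\mathcal{M}, w\models^-_{\mathsf{N4}}\chi$ by the IH. This closes the induction.

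Finally, whatever $\Gamma\subseteq\mathcal{L}$ is, the pointed model $(\mathcal{M}, w)$ satisfies $(\forall\phi\in\Gamma)(\mathcal{M}, w\models^+_{\mathsf{N4}}\phi)$ while the requirement on $\Delta=\emptyset$ is vacuously met; by the definition of $\mathsf{L}(\mathcal{L}, Nel, \models^+_{\mathsf{N4}})$, this gives $(\Gamma,\emptyset)\notin\mathsf{N4}$. There is essentially no obstacle: the only point needing care is to formulate the induction hypothesis so that it covers $\models^-_{\mathsf{N4}}$ as well as $\models^+_{\mathsf{N4}}$, since the verification clauses for $\sim$ and $\to$ are not self-contained and refer back to falsification.
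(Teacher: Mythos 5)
Your proposal is correct and follows essentially the same route as the paper: the same one-world model with $V^+(p)=V^-(p)=\{w\}$ and an induction showing every formula is satisfied there. Your explicit strengthening of the induction hypothesis to cover $\models^-_{\mathsf{N4}}$ alongside $\models^+_{\mathsf{N4}}$ is exactly the detail the paper's sketch leaves implicit.
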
 
\begin{proof}[Proof (a sketch)]
	Consider $\mathcal{M}:= (\{w\}, \{(w,w)\}, \{(p, \{w\}\mid p \in Var)\}, \{(p, \{w\}\mid p \in Var)\})$. An easy induction on the construction of the formula shows that we have $\mathcal{M}, w\models_{\mathsf{N4}}^+ \phi$ for every $\phi \in \mathcal{L}$. Therefore $(\Gamma, \emptyset) \notin \mathsf{N4}$ for every $\Gamma \subseteq \mathcal{L}$.
\end{proof}
$\mathsf{N4}$ is one of relatively well-researched non-classical propositional logics (see, e.g., \cite[Ch. 8]{odintsov} and \cite[Ch. 9.7a ff]{priest}; a very accessible exposition can be also found in \cite{pearce}).

A peculiar feature of Nelson's logic is that the contraposition fails. However, it is possible to strengthen the verification condition for implication so as to allow for contraposition, while leaving the falsification condition untouched; one can then also define a version of equivalence based on this stronger implication. More precisely, consider the following abbreviations (meant to apply to any language $\mathcal{L}'\supseteq \mathcal{L}$):
\begin{itemize}
	\item $\phi \leftrightarrow \psi$ (equivalence) for $(\phi \to \psi)\wedge(\psi\to \phi)$.
	
	\item $\phi \Rightarrow \psi$ (strong implication) for $(\phi \to \psi)\wedge(\sim\psi\to \sim\phi)$.
	
	\item $\phi \Leftrightarrow \psi$ (strong equivalence) for $(\phi \Rightarrow \psi)\wedge(\psi\Rightarrow \phi)$.
\end{itemize} 
It is easy to see that the verification and falsification conditions for $\Rightarrow$ look as follows:
\begin{align*}
	\mathcal{M}, w&\models_{\mathsf{N4}}^+ \psi \Rightarrow \chi \text{ iff } (\forall v \geq w)((\mathcal{M}, v\models_{\mathsf{N4}}^+ \psi\text{ implies }\mathcal{M}, v\models_{\mathsf{N4}}^+ \chi)\text{ and }(\mathcal{M}, v\models_{\mathsf{N4}}^- \chi\text{ implies }\mathcal{M}, v\models_{\mathsf{N4}}^- \psi))\\
	\mathcal{M}, w&\models_{\mathsf{N4}}^- \psi \Rightarrow \chi \text{ iff } \mathcal{M}, w\models_{\mathsf{N4}}^+ \psi\text{ and }\mathcal{M}, w\models_{\mathsf{N4}}^- \chi
\end{align*}
The lack of contraposition for $\to$ is also the reason why the equivalents are in general not substitutable for one another in Nelson's logic. However, strong equivalents are not only mutually substitutable, but also ensure the strong equivalence of the resulting formulas. Thus not only the strong implication, but also the strong equivalence $\Leftrightarrow$ plays an important role in $\mathsf{N4}$. We summarize these facts in the following proposition:
\begin{proposition}\label{P:n4-basics}
Let $p, q \in Var$ and let $\phi, \psi\in \mathcal{L}$. Then the following statements hold:
\begin{enumerate}
	\item $(\phi \Rightarrow \psi)\Rightarrow(\sim\psi\Rightarrow\sim\phi)\in \mathsf{N4}$.
	
	\item However, the same cannot be said about $\to$, since we have $(p \rightarrow q)\rightarrow(\sim q\rightarrow\sim p)\notin \mathsf{N4}$.
	
	\item We have $(\phi \Rightarrow \psi)\to(\phi\rightarrow\psi)\in \mathsf{N4}$, but not vice versa, so that $\Rightarrow$ is stronger than $\to$.
	
	\item If $(\phi\Leftrightarrow\psi) \in \mathsf{N4}$, and $\theta, \theta_\phi, \theta_\psi \in \mathcal{L}$ are such that $\theta_\phi$ and  $\theta_\psi$ are obtained from $\theta$ by replacing a $p \in Var$ with $\phi$ and $\psi$, respectively, then also $(\theta_\phi\Leftrightarrow\theta_\psi) \in \mathsf{N4}$.
	
	\item However, the same cannot be said about $\leftrightarrow$ since we have $\sim(p \to q)\leftrightarrow(p \wedge \sim q) \in \mathsf{N4}$, but $\sim\sim(p \to q)\leftrightarrow \sim(p \wedge \sim q) \notin \mathsf{N4}$.
\end{enumerate}
\end{proposition}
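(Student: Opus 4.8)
The plan is to settle all five items by direct inspection of the Kripke semantics of $\mathsf{N4}$, after isolating two simple facts. First, combining $(\wedge+)$, $(\to+)$, $(\sim+)$ and $(\sim-)$ one checks that a strong implication $A\Rightarrow B$ is $\mathsf{N4}$-valid iff for every pointed Nelsonian model $(\mathcal{M},w)$ one has that $\mathcal{M},w\models^+_{\mathsf{N4}}A$ implies $\mathcal{M},w\models^+_{\mathsf{N4}}B$ and that $\mathcal{M},w\models^-_{\mathsf{N4}}B$ implies $\mathcal{M},w\models^-_{\mathsf{N4}}A$ (the $\leq$-quantifier in $(\to+)$ disappears because these conditions are required at every point anyway, and reflexivity of $\leq$ is used for the converse direction). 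Consequently $A\Leftrightarrow B$ is $\mathsf{N4}$-valid iff $A$ and $B$ are \emph{congruent}, meaning that $\mathcal{M},w\models^+_{\mathsf{N4}}A$ iff $\mathcal{M},w\models^+_{\mathsf{N4}}B$, and $\mathcal{M},w\models^-_{\mathsf{N4}}A$ iff $\mathcal{M},w\models^-_{\mathsf{N4}}B$, at every $(\mathcal{M},w)$; likewise, $A\leftrightarrow B$ is valid iff $A$ and $B$ have the same $\models^+_{\mathsf{N4}}$-value at every point. Second, $(\sim+)$ and $(\sim-)$ let us freely interchange $\models^+_{\mathsf{N4}}$ and $\models^-_{\mathsf{N4}}$ each time a leading $\sim$ is stripped; in particular $\mathcal{M},w\models^+_{\mathsf{N4}}\sim\sim\phi$ iff $\mathcal{M},w\models^+_{\mathsf{N4}}\phi$.

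Given these, items 1, 3 (the positive half) and the first conjunct of item 5 are immediate. For item 1, unfolding both occurrences of $\Rightarrow$ and applying the interchange turns the two conditions required for validity into conditions already recorded by the verification and falsification clauses for $\Rightarrow$; for item 3, $\phi\Rightarrow\psi$ is by definition the conjunction $(\phi\to\psi)\wedge(\sim\psi\to\sim\phi)$, so $(\phi\Rightarrow\psi)\to(\phi\to\psi)$ follows from $(\wedge+)$ (it is also a substitution instance of the schema $\alpha_3$, hence provable already in $\mathbb{IL}^+$); for the first conjunct of item 5, $(\sim+)$, $(\to-)$ and $(\wedge+)$ show that $\sim(p\to q)$ and $p\wedge\sim q$ are verified at exactly the same points of every model, which is precisely $\leftrightarrow$-validity.

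The three negative claims --- item 2, the ``not vice versa'' part of item 3, and the second conjunct of item 5 --- can all be refuted by a single one-point reflexive countermodel. Take $\mathcal{M}=(\{w\},\{(w,w)\},V^+,V^-)$ with $w\notin V^+(p)$, $w\notin V^-(p)$, $w\notin V^+(q)$ and $w\in V^-(q)$ (the remaining letters being, say, everywhere false). Then $p\to q$ holds at $w$ vacuously, whereas $\sim q\to\sim p$ fails at $w$ because $\mathcal{M},w\models^+_{\mathsf{N4}}\sim q$ while $\mathcal{M},w\not\models^+_{\mathsf{N4}}\sim p$; since $p\Rightarrow q$ includes $\sim q\to\sim p$ as a conjunct, this refutes both $(p\to q)\to(\sim q\to\sim p)$ and $(p\to q)\to(p\Rightarrow q)$ at $w$. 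The same model refutes $\sim\sim(p\to q)\to\sim(p\wedge\sim q)$ at $w$: $\mathcal{M},w\models^+_{\mathsf{N4}}\sim\sim(p\to q)$ because $\mathcal{M},w\models^+_{\mathsf{N4}}p\to q$, but $\mathcal{M},w\not\models^+_{\mathsf{N4}}\sim(p\wedge\sim q)$, since $\mathcal{M},w\not\models^-_{\mathsf{N4}}p$ and $\mathcal{M},w\not\models^+_{\mathsf{N4}}q$, so that $\mathcal{M},w\not\models^-_{\mathsf{N4}}p\wedge\sim q$; hence the equivalence fails.

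Item 4 is the one genuinely inductive part. Using the congruence characterization, the hypothesis $\phi\Leftrightarrow\psi\in\mathsf{N4}$ says that $\phi$ and $\psi$ are congruent, and the claim to prove is that $\theta_\phi$ and $\theta_\psi$ are congruent, by induction on the construction of $\theta$. In the base case, if $\theta$ is the letter $p$ then $\theta_\phi=\phi$ and $\theta_\psi=\psi$ are congruent by hypothesis, and if $\theta$ is any other propositional letter then $\theta_\phi=\theta_\psi$ and congruence is trivial. For the inductive step one runs through the eight clauses for $\wedge$, $\vee$, $\sim$, $\to$ (four for $\models^+_{\mathsf{N4}}$ and four for $\models^-_{\mathsf{N4}}$): each expresses the truth value of the compound at a world $w$ in terms of the truth values of its immediate subformulas either at $w$ itself or at the worlds $v\geq w$, so since the induction hypothesis provides congruence of the substituted subformulas \emph{uniformly at all worlds of all models}, each clause carries congruence up to the compound; congruence of $\theta_\phi$ and $\theta_\psi$ then gives $\theta_\phi\Leftrightarrow\theta_\psi\in\mathsf{N4}$. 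I expect the only points needing care to be: stating the congruence property with the quantifier over all worlds built in from the start, so the $\leq$-quantifier in $(\to+)$ is unproblematic; and propagating the $\models^-_{\mathsf{N4}}$-half of congruence through the step, the clause $(\to-)$ in particular mixing $\models^+_{\mathsf{N4}}$ and $\models^-_{\mathsf{N4}}$. This last point is exactly why the replacement property requires $\Leftrightarrow$ rather than $\leftrightarrow$: the failure for $\leftrightarrow$ is precisely what item 5 records, taking $\theta:=\sim r$ for a fresh $r\in Var$, $\phi:=\sim(p\to q)$, $\psi:=p\wedge\sim q$, so that $\phi\leftrightarrow\psi$ is valid but $\theta_\phi\leftrightarrow\theta_\psi$ is not.
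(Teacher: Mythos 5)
Your proposal is correct: the pointwise characterization of $\Rightarrow$/$\Leftrightarrow$-validity, the single one-point countermodel for the three negative claims, and the induction on $\theta$ for the replacement property all check out against the semantics of $\mathsf{N4}$. The paper itself omits the proof as obvious, and your argument is exactly the routine semantic verification the author evidently had in mind, so there is nothing to contrast.
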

We omit the obvious proof.

%One can also easily pass from $\mathsf{N4}$ to other variants of the same logic like $\mathsf{N4}^+$, $\mathsf{N4}^e$, and $\mathsf{N4}^{e+}$ by changing the language to $\mathcal{L}^+$, $\mathcal{L}^e$, and $\mathcal{L}^{e+}$, respectively and amending the scope of the clause \eqref{Cl:atom} to allow for a possible changes in the set of propositional letters. The clauses \eqref{Cl:neg+} and \eqref{Cl:neg-} for the positive languages can be either omitted or retained since their presence will no longer make any difference. The same range of standard variations will be available for any logic $\mathsf{L}$ to be considered in this paper and, having introduced a new logic $\mathsf{L}$ on the basis of its Kripke semantics, we will be assuming $\mathsf{L}^+$, $\mathsf{L}^e$, and $\mathsf{L}^{e+}$ to be well-defined without going into the actual details of their definitions.
In what follows, we will also end up mentioning several other logics based on similar sets of connectives. First of them is intuitionistic logic which we define as $\mathsf{IL}:= \mathsf{L}(\mathcal{L}, Int, \models_{\mathsf{IL}})$, where $\models_{\mathsf{IL}}$ is given by clauses \eqref{Cl:atom}, \eqref{Cl:con+}, \eqref{Cl:dis+}, \eqref{Cl:im+} plus the following alternative clause for negation:
\begin{align}
	\mathcal{M}, w&\models_{\mathsf{IL}} \sim\psi \text{ iff } (\forall v \geq w)(\mathcal{M}, v\not\models_{\mathsf{IL}} \psi)\label{Cl:negi}\tag{$\sim i$}
\end{align}
The second one is the \textit{extended positive intuitionistic logic} given by $\mathsf{IL}^{e+}:= \mathsf{L}(\mathcal{L}^{e+}, Int^e, \models_{\mathsf{IL}})$. The third system is the \textit{classical logic}, which can be defined as $\mathsf{CL}:= \mathsf{L}(\mathcal{L}, Cl, \models_{\mathsf{IL}})$. 

All of the logics introduced thus far can be alternatively defined by means of Hilbert-style systems. More precisely, consider the following axiomatic schemes:
\begin{align}
	\sim\sim\phi &\leftrightarrow \phi\label{E:a0.1}\tag{An1}\\
	\sim(\phi\wedge \psi) 
	&\leftrightarrow (\sim\phi\vee \sim\psi)\label{E:a0.2}\tag{An2}\\
	\sim(\phi\vee \psi) &\leftrightarrow (\sim\phi\wedge \sim\psi)\label{E:a0.3}\tag{An3}\\
	\sim(\phi\to \psi) &\leftrightarrow (\phi\wedge \sim\psi)\label{E:a0.4}\tag{An4}\\
(\phi\to \psi)&\to((\phi\to\sim\psi)\to \sim\phi)\label{E:a0.5}\tag{An5}\\
		\phi&\to(\sim\phi\to\psi)\label{E:a0.6}\tag{An6}\\
	\phi&\vee\sim\phi\label{E:a0.7}\tag{An7}
\end{align} 
We can now introduce the following axiomatic systems:
\begin{align*}
	\mathbb{N}4:= (\mathbb{IL}^+)+(\eqref{E:a0.1}-\eqref{E:a0.4};),\,\mathbb{IL}:= (\mathbb{IL}^+)+(\eqref{E:a0.5},\eqref{E:a0.6};),\,\mathbb{CL}:= (\mathbb{IL})+(\eqref{E:a0.7};).
\end{align*}
The following facts are well-known:
\begin{proposition}\label{P:prop}
	The following equations hold:
	\begin{enumerate}
		\item $\mathsf{N4} = \mathbb{N}4[\mathcal{L}]$.
		
		\item $\mathsf{IL}^{e+} = (\mathbb{IL}^+)[\mathcal{L}^{e+}]$.
		
		\item $\mathsf{IL} = \mathbb{IL}^\iota[\mathcal{L}]$.
		
		\item $\mathsf{CL} = \mathbb{CL}^\iota[\mathcal{L}] = (\mathbb{N}4+(\eqref{E:a0.6},\eqref{E:a0.7};))^\iota[\mathcal{L}] = ((\mathbb{IL}^+)+(\eqref{E:a0.6},\eqref{E:a0.7};))^\iota[\mathcal{L}]$.	
	\end{enumerate}
\end{proposition}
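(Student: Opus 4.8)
The plan is to read all four equations as the standard soundness-and-completeness theorems for, respectively, $\mathsf{N4}$, the positive fragment of intuitionistic logic over the extended propositional vocabulary, full intuitionistic logic, and classical logic, supplemented for item 4 by a purely proof-theoretic check that the three displayed axiomatic systems generate the same logic. I would first dispatch the soundness (left-to-right) inclusions: by the obvious induction on proof length, every axiom of $\mathbb{N}4$, $\mathbb{IL}^+$, $\mathbb{IL}$ and $\mathbb{CL}$ is valid in the pertinent model class under the pertinent satisfaction relation, and \eqref{E:mp} preserves the consequence relation; for the negation axioms \eqref{E:a0.1}--\eqref{E:a0.4} this uses precisely the falsification clauses \eqref{Cl:con-}, \eqref{Cl:dis-}, \eqref{Cl:neg+}, \eqref{Cl:neg-} and \eqref{Cl:im-}. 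The move from $\mathbb{S}[Lang]$ to $\mathbb{S}^\iota[Lang]$ is semantically harmless: no pointed $\mathsf{IL}$- or $\mathsf{CL}$-model verifies $p_1\wedge\sim p_1$, so requiring $(\Gamma,\{p_1\wedge\sim p_1\})$ and requiring $(\Gamma,\emptyset)$ amount to the same thing; for $\mathsf{N4}$ and $\mathsf{IL}^{e+}$ the patch is vacuous by the argument of Proposition \ref{P:n4-satisfiable} (every $\Gamma$ is satisfiable), which is exactly why no $\iota$ appears in items 1 and 2.

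For the completeness (right-to-left) inclusions I would run the usual canonical-model construction over \emph{prime theories}, i.e.\ sets $w$ deductively closed in the relevant $\mathbb{S}$ and enjoying the disjunction property; note that for $\mathsf{N4}$ these need not be consistent, which is what makes the construction go through for a paraconsistent base. The key lemma is a pair-extension (Lindenbaum) statement: if $(\Gamma,\Delta)\notin\mathbb{S}[Lang]$ then there is a prime theory $w$ with $\Gamma\subseteq w$ and $w\cap\Delta=\emptyset$. Granting this, I would order prime theories by inclusion, put $V^+(p):=\{w\mid p\in w\}$ and, for $\mathsf{N4}$, $V^-(p):=\{w\mid\sim p\in w\}$ — monotonicity \eqref{Cond:mon} being immediate from deductive closure — and establish the truth lemma, namely $\mathcal{M},w\models^+\phi$ iff $\phi\in w$ and, for $\mathsf{N4}$, $\mathcal{M},w\models^-\phi$ iff $\sim\phi\in w$, by induction on $\phi$: the $\models^-$ cases reduce to the $\models^+$ cases via \eqref{E:a0.1}--\eqref{E:a0.4}, and the $\to$ case uses primeness just as in the intuitionistic setting. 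This gives items 1 and 2 directly (item 2 in the $\sim$-free extended language over $Int^e$, where $V^-$ and the $\models^-$ clauses simply drop out). For item 3 the canonical model is an ordinary intuitionistic Kripke model, and since $(\Gamma,\{p_1\wedge\sim p_1\})\in\mathbb{IL}[\mathcal{L}]$ iff $\Gamma$ is $\mathbb{IL}$-inconsistent, the $\iota$-closure is precisely what absorbs the pairs $(\Gamma,\emptyset)$; for $\mathsf{CL}$ I would restrict attention to maximal consistent sets, equivalently take singleton-world canonical models, where \eqref{E:a0.7} forces each world to be complete and $\leq$ to be trivial.

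The remaining content of item 4 is that $\mathbb{CL}$, $\mathbb{N}4+(\eqref{E:a0.6},\eqref{E:a0.7};)$ and $(\mathbb{IL}^+)+(\eqref{E:a0.6},\eqref{E:a0.7};)$ prove the same formulas — hence, all sharing \eqref{E:mp}, induce the same logic over $\mathcal{L}$ and then the same $\iota$-closure. The inclusions $(\mathbb{IL}^+)+(\eqref{E:a0.6},\eqref{E:a0.7};)\subseteq\mathbb{N}4+(\eqref{E:a0.6},\eqref{E:a0.7};)$ and $(\mathbb{IL}^+)+(\eqref{E:a0.6},\eqref{E:a0.7};)\subseteq\mathbb{CL}$ are immediate from the definitions of $\mathbb{N}4$ and $\mathbb{CL}$. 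For the converses it suffices to derive, over $(\mathbb{IL}^+)+(\eqref{E:a0.6},\eqref{E:a0.7};)$, the negation axiom \eqref{E:a0.5} — which yields all of $\mathbb{IL}$, hence $\mathbb{CL}$ since \eqref{E:a0.7} is already present — together with the De Morgan and double-negation laws \eqref{E:a0.1}--\eqref{E:a0.4}; these are routine Hilbert-style derivations from $\alpha_1$--$\alpha_8$, \eqref{E:a0.6} and \eqref{E:a0.7}, and can also simply be quoted from the literature on $\mathsf{N4}$ cited after Proposition \ref{P:n4-satisfiable}.

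I expect the only genuinely delicate step to be the pair-extension lemma in its strong form, with $\Gamma$ and $\Delta$ arbitrary (possibly infinite) and the conclusion side handled in the multiple-conclusion style of $\mathbb{S}[Lang]$: one has to push the theory all the way to primeness while keeping \emph{every} finite disjunction drawn from $\Delta$ underivable at every stage, which calls for a careful ordering of the enumerate-and-add construction plus the small compactness observation that no single proof uses infinitely many premises or produces an infinite disjunction. Everything else in the argument is bookkeeping or routine propositional derivation.
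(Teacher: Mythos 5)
Your proposal is correct, but note that the paper itself gives no proof of Proposition \ref{P:prop}: it is stated as a collection of well-known results (with the surrounding references to the literature on $\mathsf{N4}$), so there is no in-paper argument to compare against. Your sketch is the standard one by which these facts are established in that literature — soundness by induction on derivations, completeness via a canonical model of prime (not necessarily consistent) theories obtained from a pair-extension lemma, with the $\iota$-closure absorbing exactly the pairs $(\Gamma,\emptyset)$ in the $\mathsf{IL}$/$\mathsf{CL}$ cases and being unnecessary for $\mathsf{N4}$ and $\mathsf{IL}^{e+}$ — and your treatment of item 4 (deriving \eqref{E:a0.1}--\eqref{E:a0.5} from $\mathbb{IL}^+$ plus \eqref{E:a0.6}, \eqref{E:a0.7} by case analysis on excluded middle, then noting that systems proving the same formulas and sharing \eqref{E:mp} induce the same consequence relation and the same $\iota$-closure) is sound; the only cosmetic slip is that in the truth lemma the $\to$ case rests on the pair-extension lemma rather than on primeness, which is what the $\vee$ case needs.
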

Furthermore, consider the mapping $E:\mathcal{L}\to\mathcal{L}^{e+}$ defined by the following induction on the construction of $\phi \in \mathcal{L}$:
\begin{align*}
	E(p_i)&:= p_i&&i \in \omega\\
	E(\sim p_i)&:= q_i&&i \in \omega\\
	E(\phi\wedge \psi)&:= E(\phi)\wedge E(\psi)\\
	E(\sim(\phi \wedge \psi))&:= E(\sim\phi)\vee E(\sim\psi)\\
	E(\phi\vee \psi)&:= E(\phi)\vee E(\psi)\\
	E(\sim(\phi \vee \psi))&:= E(\sim\phi)\wedge E(\sim\psi)\\
	E(\sim\sim\phi)&:= E(\phi)\\
	E(\phi \to \psi)&:= E(\phi)\to E(\psi)\\
	E(\sim(\phi \to \psi))&:= E(\phi)\wedge E(\sim\psi)
\end{align*}
It is known that $E$ faithfully embeds $\mathsf{N4}$ into $\mathsf{IL}^{e+}$; in other words one can prove:
\begin{proposition}\label{P:prop-embedding}
	For all $\Gamma, \Delta \subseteq \mathcal{L}$, we have $\Gamma\models_{\mathsf{N4}}\Delta$ iff $E(\Gamma)\models_{\mathsf{IL}^{e+}}E(\Delta)$.
\end{proposition}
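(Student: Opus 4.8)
\noindent\textit{Approach.} I would prove the proposition semantically, by setting up a bijective correspondence between Nelsonian models and extended intuitionistic models under which the translation $E$ carries the two satisfaction relations $\models^+_{\mathsf{N4}}$ and $\models^-_{\mathsf{N4}}$ over to the single intuitionistic relation $\models_{\mathsf{IL}}$. This is the semantic form of a standard negative-translation argument; the only work is bookkeeping in an induction.

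\noindent\textit{Step 1: the model correspondence.} Given a Nelsonian model $\mathcal{M} = (W, \leq, V^+, V^-)$, I would define an extended intuitionistic model $\mathcal{M}^\circ := (W, \leq, V)$ by $V(p_i) := V^+(p_i)$ and $V(q_i) := V^-(p_i)$ for all $i \in \omega$. Since $V^+$ and $V^-$ both satisfy \eqref{Cond:mon}, so does $V$, hence $\mathcal{M}^\circ \in Int^e$. Conversely, from $\mathcal{N} = (W, \leq, V) \in Int^e$ I would build $\mathcal{N}^\bullet := (W, \leq, U^+, U^-)$ with $U^+(p_i) := V(p_i)$ and $U^-(p_i) := V(q_i)$; monotonicity is again inherited, so $\mathcal{N}^\bullet \in Nel$. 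One checks at once that $(\mathcal{M}^\circ)^\bullet = \mathcal{M}$ and $(\mathcal{N}^\bullet)^\circ = \mathcal{N}$, so $\mathcal{M} \mapsto \mathcal{M}^\circ$ is an underlying-set-preserving bijection from $Nel$ onto $Int^e$.

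\noindent\textit{Step 2: the truth lemma.} I would then prove, by simultaneous induction on the construction of $\phi \in \mathcal{L}$, that for every $\mathcal{M} = (W, \leq, V^+, V^-) \in Nel$ and every $w \in W$: (a) $\mathcal{M}, w \models^+_{\mathsf{N4}} \phi$ iff $\mathcal{M}^\circ, w \models_{\mathsf{IL}} E(\phi)$; and (b) $\mathcal{M}, w \models^-_{\mathsf{N4}} \phi$ iff $\mathcal{M}^\circ, w \models_{\mathsf{IL}} E(\sim\phi)$. For $\phi = p_i$, (a) holds since $V(p_i) = V^+(p_i)$, and (b) holds since $E(\sim p_i) = q_i$ and $V(q_i) = V^-(p_i)$. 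For $\phi$ of the form $\psi\wedge\chi$, $\psi\vee\chi$, or $\psi\to\chi$, part (a) follows because the $\models^+_{\mathsf{N4}}$-clause matches the intuitionistic clause for that connective, while $E$ commutes with the connective, so the induction hypothesis (a) for the immediate subformulas applies; part (b) for these three cases uses the falsification clauses \eqref{Cl:con-}, \eqref{Cl:dis-}, \eqref{Cl:im-} together with the negation-pushing clauses $E(\sim(\psi\wedge\chi)) = E(\sim\psi)\vee E(\sim\chi)$, $E(\sim(\psi\vee\chi)) = E(\sim\psi)\wedge E(\sim\chi)$, $E(\sim(\psi\to\chi)) = E(\psi)\wedge E(\sim\chi)$ and the induction hypothesis — note the $\to$ case draws on (a) for $\psi$ and on (b) for $\chi$. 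Finally, for $\phi = \sim\psi$: part (a) reduces via clause \eqref{Cl:neg+} to $\mathcal{M}, w \models^-_{\mathsf{N4}} \psi$, equivalent by hypothesis (b) to $\mathcal{M}^\circ, w \models_{\mathsf{IL}} E(\sim\psi) = E(\phi)$; part (b) reduces via clause \eqref{Cl:neg-} to $\mathcal{M}, w \models^+_{\mathsf{N4}} \psi$, equivalent by hypothesis (a) to $\mathcal{M}^\circ, w \models_{\mathsf{IL}} E(\psi)$, and $E(\sim\sim\psi) = E(\psi)$.

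\noindent\textit{Step 3: conclusion, and the main difficulty.} By the definition of $\mathsf{L}(Lang, Mod, \triangleright)$, $\Gamma \not\models_{\mathsf{N4}} \Delta$ iff some pointed Nelsonian model $(\mathcal{M}, w)$ has $\mathcal{M}, w\models^+_{\mathsf{N4}}\gamma$ for all $\gamma\in\Gamma$ and $\mathcal{M}, w\not\models^+_{\mathsf{N4}}\delta$ for all $\delta\in\Delta$; by part (a) of the truth lemma and the fact that $\mathcal{M}\mapsto\mathcal{M}^\circ$ ranges exactly over $Int^e$, this is equivalent to $E(\Gamma) \not\models_{\mathsf{IL}^{e+}} E(\Delta)$, and contraposing gives the claim. (The case $\Delta = \emptyset$ is covered automatically: by Proposition~\ref{P:n4-satisfiable} $\Gamma\not\models_{\mathsf{N4}}\emptyset$, while the one-point Nelsonian model in which every letter is both verified and falsified maps to the one-point intuitionistic model in which every letter is true, witnessing $E(\Gamma)\not\models_{\mathsf{IL}^{e+}}\emptyset$; so both sides fail.) I do not expect any real obstacle; the only point requiring care is the exact shape of clause (b) of the truth lemma, which must be stated with $E(\sim\phi)$ — a $\sim$-free formula of $\mathcal{L}^{e+}$ — rather than $\sim E(\phi)$, and the verification that each recursion clause for $E$ is precisely the mirror image of the corresponding falsification clause and of \eqref{Cl:neg+}, \eqref{Cl:neg-}. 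A purely syntactic proof using Proposition~\ref{P:prop}(1)--(2) is also possible but more laborious, because of the $\leftrightarrow$'s in axioms \eqref{E:a0.1}--\eqref{E:a0.4}.
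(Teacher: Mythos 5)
Your proposal is correct and follows essentially the same route as the paper's own proof: the same variable-renaming correspondence between Nelsonian and extended intuitionistic models, a truth lemma by induction linking $\models^+_{\mathsf{N4}}$ with $\models_{\mathsf{IL}}$ via $E$, and transfer of countermodels in both directions. The only cosmetic difference is packaging: the paper states two separate claims (one per direction of the model transformation) with a single satisfaction relation, whereas you observe the maps are mutually inverse and run a simultaneous induction on verification and falsification, which amounts to the same argument.
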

\begin{proof}[Proof (a sketch)]
We prove the following claims:
	
	\textit{Claim 1}. Let $\mathcal{M} = (W, \leq, V^+, V^-)\in Nel$. Then $\mathcal{M}^i = (W, \leq, V)\in Int^e$  where, for an $i\in \omega$,  we set $V(p_i): = V^+(p_i)$ and $V(q_i) := V^-(p_i)$. Moreover, we have:
	$$
	\mathcal{M}, w \models_{\mathsf{N4}}^+ \phi\text{ iff }\mathcal{M}^i, w \models_{\mathsf{IL}} E(\phi)
	$$
	for every $w \in W$ and every $\phi \in \mathcal{L}$.
	
	The proof of Claim 1 is straightforward; the proof of its main bi-conditional proceeds by induction on the construction of $\phi$ in which we only consider a couple of cases.
	
	Assume $\phi = \psi \to \chi$. Then $\mathcal{M}, w \models_{\mathsf{N4}}^+ \phi$ iff $(\forall v \geq v)(\mathcal{M}, v \models_{\mathsf{N4}}^+ \psi\text{ implies }\mathcal{M}, v \models_{\mathsf{N4}}^+ \chi)$ iff, by IH, $(\forall v \geq v)(\mathcal{M}^i, v \models_{\mathsf{IL}} E(\psi)\text{ implies }\mathcal{M}^i, v \models_{\mathsf{IL}} E(\chi))$ iff $\mathcal{M}^i, w \models_{\mathsf{IL}} E(\phi)$.
	
	On the other hand, $\mathcal{M}, w \models_{\mathsf{N4}}^+ \sim\phi$ iff $\mathcal{M}, w \models_{\mathsf{N4}}^+ \psi\text{ and }\mathcal{M}, v \models_{\mathsf{N4}}^- \chi$ iff $\mathcal{M}, w \models^+ \psi\text{ and }\mathcal{M}, v \models_{\mathsf{N4}}^+ \sim\chi$, iff, by IH, $\mathcal{M}^i, v \models_{\mathsf{IL}} E(\psi)\text{ and }\mathcal{M}^i, v \models_{\mathsf{IL}} E(\sim\chi)$ iff $\mathcal{M}^i, w \models_{\mathsf{IL}} E(\sim\phi)$.  
	
	\textit{Claim 2}. Let $\mathcal{M} = (W, \leq, V)\in Int^e$. Then $\mathcal{M}^{n4} = (W, \leq, V^+, V^-)\in Nel$,  where, for an $i \in \omega$, we set $V^+(p_i): = V(p_i)$ and $V^-(p_i) := V(q_i)$ is an $\mathsf{N4}$-model, and, moreover, we have:
	$$
	\mathcal{M}^{n4}, w \models_{\mathsf{N4}}^+ \phi\text{ iff }\mathcal{M}, w \models_{\mathsf{IL}} E(\phi)
	$$
	for every $w \in W$ and every $\phi \in \mathcal{L}$.
	
	The proof is similar to the proof of Claim 1.
	
	Now, if $\mathcal{M}, w \models_{\mathsf{N4}} (\Gamma, \Delta)$, then, by Claim 1, $\mathcal{M}^i, w\models_{\mathsf{IL}}(E(\Gamma), E(\Delta))$. Conversely, if $\mathcal{M}, w\models_{\mathsf{IL}}(E(\Gamma),E(\Delta))$, then, by Claim 2, $\mathcal{M}^{n4}, w\models_{\mathsf{N4}}(\Gamma, \Delta)$.   
\end{proof}
We end this section with a summary of the known facts about DP and CFP for the introduced logics:
\begin{proposition}\label{P:dp-basics}
	The following statements are true:
	\begin{enumerate}
		\item DP is satisfied by $\mathsf{N4}$, $\mathsf{IL}$ and $\mathsf{IL}^+$, and failed by $\mathsf{CL}$.
		
		\item CFP is satisfied by  $\mathsf{N4}$ and failed by $\mathsf{IL}$ and $\mathsf{CL}$.		
	\end{enumerate}
\end{proposition}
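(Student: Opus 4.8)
The plan is to separate the ``easy'' halves of both properties, which follow from monotonicity of the satisfaction relations together with the clauses for $\vee$, from the ``hard'' halves, which for the constructive logics I would establish by a standard model-gluing construction, and from the failures, which I would witness by explicit one-point countermodels.

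For DP over $\mathsf{N4}$, $\mathsf{IL}$ and the positive intuitionistic logic: the implication from ``$\phi \in \mathsf{L}$ or $\psi \in \mathsf{L}$'' to ``$\phi \vee \psi \in \mathsf{L}$'' is immediate from \eqref{Cl:dis+}. For the converse I argue contrapositively. Assuming $\phi \notin \mathsf{L}$ and $\psi \notin \mathsf{L}$, I fix pointed countermodels $\mathcal{M}_1, w_1$ and $\mathcal{M}_2, w_2$, replacing one by an isomorphic copy if necessary so that $W_1 \cap W_2 = \emptyset$, and glue them below a fresh root: set $W := W_1 \cup W_2 \cup \{r\}$, let $\leq$ be $\leq_1 \cup \leq_2$ together with all pairs $(r, x)$ for $x \in W$, and lift the valuation(s) by $V^\star(p) := V^\star_1(p) \cup V^\star_2(p)$ (for $\star \in \{+,-\}$ in the Nelsonian case, and likewise for the single valuation in the intuitionistic case), so that no propositional letter holds at $r$, in $\mathsf{N4}$ neither positively nor negatively. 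One checks that $\leq$ is a pre-order and that \eqref{Cond:mon} holds at $r$ vacuously, so the resulting structure lies in the relevant model class. Since $\leq$ has no edge leading upward out of $W_i$, the submodel of $\mathcal{M}$ generated by any $x \in W_i$ is exactly $\mathcal{M}_i$ restricted to the $\leq_i$-cone above $x$, and a routine induction on formulas gives that satisfaction at the points of $W_i$ is unchanged; in particular $\mathcal{M}, w_1 \not\models \phi$ and $\mathcal{M}, w_2 \not\models \psi$. Using persistence of satisfaction along $\leq$ (for $\mathsf{N4}$, the joint persistence of $\models^+_{\mathsf{N4}}$ and $\models^-_{\mathsf{N4}}$, which follows by the usual induction from \eqref{Cond:mon}) together with $r \leq w_1$ and $r \leq w_2$, we obtain $\mathcal{M}, r \not\models \phi$ and $\mathcal{M}, r \not\models \psi$, hence $\mathcal{M}, r \not\models \phi \vee \psi$ and $\phi \vee \psi \notin \mathsf{L}$. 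For the failure of DP in $\mathsf{CL}$ I take $p_1 \vee \sim p_1$: on any singleton model $\models_{\mathsf{IL}}$-negation is Boolean by \eqref{Cl:negi}, so $p_1 \vee \sim p_1 \in \mathsf{CL}$, while the singleton models with $w \in V(p_1)$ and with $w \notin V(p_1)$ refute $\sim p_1$ and $p_1$ respectively.

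For CFP over $\mathsf{N4}$ I would note that clauses \eqref{Cl:neg+}, \eqref{Cl:con-} and \eqref{Cl:dis+} give $\mathcal{M}, w \models^+_{\mathsf{N4}} \sim(\phi \wedge \psi)$ iff $\mathcal{M}, w \models^+_{\mathsf{N4}} \sim\phi \vee \sim\psi$ at every pointed Nelsonian model; hence $\sim(\phi \wedge \psi) \in \mathsf{N4}$ iff $\sim\phi \vee \sim\psi \in \mathsf{N4}$, and CFP for $\mathsf{N4}$ follows by applying DP for $\mathsf{N4}$ to $\sim\phi$ and $\sim\psi$. For the failure of CFP in $\mathsf{IL}$ and $\mathsf{CL}$ I again use $p_1$ and $\sim p_1$: the formula $\sim(p_1 \wedge \sim p_1)$ is $\mathsf{IL}$-valid — no $v \geq w$ can satisfy both $p_1$ and $\sim p_1$, by reflexivity of $\leq$ — hence also $\mathsf{CL}$-valid, whereas $\sim p_1$ is refuted by the one-point model with $p_1$ true and $\sim\sim p_1$ by the one-point model with $p_1$ false.

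The only step requiring genuine care is the gluing construction: one must check that the glued structure really is a model of the right kind and, above all, run the invariance-of-satisfaction induction correctly, which in the $\mathsf{N4}$ case means tracking both $\models^+_{\mathsf{N4}}$ and $\models^-_{\mathsf{N4}}$ (and hence proving persistence for both) rather than just the verification relation; everything else is bookkeeping with one-point models.
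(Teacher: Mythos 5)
Your proof is correct; note that the paper states Proposition \ref{P:dp-basics} without proof, as a summary of known facts, so there is no in-paper argument to compare against. Your root-adjunction (gluing) construction together with persistence of satisfaction, and the reduction of CFP to DP via the falsification clauses, is the standard argument and is exactly the strategy the paper itself deploys when proving the analogous Proposition \ref{P:dp} for $\mathsf{N4CK}$, so your route matches the paper's methodology.
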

 
\section{$\mathsf{N4CK}$, a basic paraconsistent Nelsonian logic of conditionals}\label{S:system}
\subsection{Language and semantics}\label{sub:lands}
We are going to define $\mathsf{N4CK}$, a basic paraconsistent Nelsonian logic of conditionals over the language $\mathcal{L}_{\boxto}$; the might-conditional $\diamondto$ will not be considered an elementary connective in $\mathsf{N4CK}$ but will be introduced as an abbreviation instead:
\begin{itemize}
	\item $\phi\diamondto\psi$ (might-conditional) is an abbreviation for $\sim(\phi\boxto\sim\psi)$.
\end{itemize}
The formulas of $\mathcal{L}_{\boxto}$ are interpreted by the following models\footnote{The models defined are the so-called \textit{Chellas models}, introduced in \cite{chellas}. An eqiuvalent (at least as far as expressivity of $\mathcal{L}_{\boxto}$ goes) semantics could have been given by \textit{Segerberg models}, introduced in \cite{segerberg}, which use a designated family of pairs of subsets of $W$ in place of $\mathcal{P}(W)\times\mathcal{P}(W)$ as in Definition \ref{D:model}. Yet another alternative is to use families of formula-indexed binary relations $\{R_\phi\mid\phi\in\mathcal{L}\}$. We have chosen Chellas models since their definition looks short and simple; yet the Segerberg models can be perhaps ascribed a deeper foundational meaning.}:

\begin{definition}\label{D:model}
	A Nelsonian conditional model is a structure of the form $\mathcal{M} = (W, \leq, R, V^+, V^-)$, $(W, \leq, V^+, V^-)\in Nel$ and $R \subseteq W \times (\mathcal{P}(W)\times \mathcal{P}(W))\times W$. Thus, for all $X,Y \subseteq W$, $R$ induces a binary relation $R_{(X,Y)}$ on $W$ such that, for all $w,v \in W$, $R_{(X,Y)}(w,v)$ iff $R(w, (X,Y), v)$. Finally, the following conditions must be satisfied for all $X, Y \subseteq W$:
	\begin{align}
		(\leq^{-1}\circ R_{(X,Y)}) &\subseteq (R_{(X,Y)}\circ\leq^{-1})\label{Cond:1}\tag{c1}\\
		(R_{(X,Y)}\circ\leq) &\subseteq (\leq\circ R_{(X,Y)})\label{Cond:2}\tag{c2}
	\end{align}	
\end{definition}
Conditions \eqref{Cond:1} and \eqref{Cond:2} can be reformulated as requirements to complete the dotted parts of each of the following diagrams once the respective straight-line part is given:
\begin{center}
\begin{diagram}
	w'& \rDotsto_{R_{(X,Y)}} & v'         & & w'& \rDotsto_{R_{(X,Y)}} & v'\\
	\uTo_\leq &     & \uDotsto_\leq & & \uDotsto_\leq & &  \uTo_\leq\\
	w & \rTo_{R_{(X,Y)}} & v              & & w        &  \rTo_{R_{(X,Y)}} & v
\end{diagram}	
\end{center}
Our standard notation for models is $\mathcal{M} = (W, \leq, R, V^+, V^-)$. Any model decorations are assumed to be inherited by their components, so that, for example $\mathcal{M}_n$ always stands for  $(W_n, \leq_n, R_n, V^+_n, V^-_n)$. The class of all conditional Nelsonian models will be denoted by $CNel$. Just as in the case of $\mathsf{N4}$, we define two satisfaction relations for $\mathsf{N4CK}$, denoted by $\models^+$ and $\models^-$. Again, we associate them with the notions of \textit{verification} and \textit{falsification}, respectively, and define them by induction on the construction of $\phi \in \mathcal{L}_{\boxto}$ by means of all the clauses that we have given earlier for $\models_\mathsf{N4}^+$ and $\models_\mathsf{N4}^-$, respectively, plus the following clauses for $\boxto$:
\begin{align*}
	\mathcal{M}, w&\models^+ \psi \boxto \chi \text{ iff } (\forall v \geq w)(\forall u \in W)(R_{\|\psi\|_\mathcal{M}}(v, u) \text{ implies }\mathcal{M}, u\models^+ \chi)\\
	\mathcal{M}, w&\models^- \psi \boxto \chi \text{ iff } (\exists u \in W)(R_{\|\psi\|_\mathcal{M}}(w, u)\text{ and }\mathcal{M}, u\models^- \chi)	
\end{align*}
where we assume, for any given $\phi \in \mathcal{L}_{\boxto}$, that:
$$
\|\phi\|_\mathcal{M}: = (\|\phi\|^+_\mathcal{M}, \|\phi\|^-_\mathcal{M}) = (\{w \in W\mid \mathcal{M}, w\models^+\phi\}, \{w \in W\mid \mathcal{M}, w\models^-\phi\}).
$$
It is instructive to compute the inductive clauses for $\diamondto$ as well:
\begin{align*}
	\mathcal{M}, w&\models^+ \psi \diamondto \chi \Leftrightarrow (\exists u \in W)(R_{\|\psi\|_\mathcal{M}}(w, u)\text{ and }\mathcal{M}, u\models^+ \chi)\\
	\mathcal{M}, w&\models^- \psi \diamondto \chi \text{ iff } (\forall v \geq w)(\forall u \in W)(R_{\|\psi\|_\mathcal{M}}(v, u) \text{ implies }\mathcal{M}, u\models^- \chi)	
\end{align*}
We now define $\mathsf{N4CK}:= \mathsf{L}(\mathcal{L}_{\boxto}, CNel, \models^+)$. Throughout this section, we will write $\Gamma\models\Delta$ and $\mathcal{M}, w \models (\Gamma, \Delta)$, meaning $\Gamma\models_{\mathsf{N4CK}}\Delta$ and $\mathcal{M}, w \models_{\mathsf{N4CK}} (\Gamma, \Delta)$, respectively.
\begin{remark}\label{R:1}
	In conditional logic, it is customary to generate the binary accessibility relations using truth-sets of the formulas. In the case of $\mathsf{N4CK}$, following this idea would entail defining $R$ to be a subset of $W \times \mathcal{P}(W)\times W$ instead of $W \times (\mathcal{P}(W)\times \mathcal{P}(W))\times W$ and setting $\|\phi\|_\mathcal{M}:= \{w \in W\mid \mathcal{M}, w\models^+\phi\}$; let us denote the resulting logic by $\mathsf{N4CK}'$ Note that $\mathsf{N4CK}'$ can hardly be called a \textit{minimal} conditional expansion of $\mathsf{N4}$, since we have $(\phi\boxto\chi)\Leftrightarrow(\phi\boxto\chi) \in \mathsf{N4CK}'$ whenever $\phi\leftrightarrow\psi\in \mathsf{N4CK}'$. By Proposition \ref{P:n4-basics}.5, we know that in $\mathsf{N4}$ the substitution of simple equivalents generally fails to guarantee even the simple equivalence of the resulting substitution instances, let alone their strong equivalence. Imposing this very strong substitution rule for the antecedents of $\boxto$ would certainly stand in need of a separate argument.
	
	On the other hand, we will see in the next subsection that, in $\mathsf{N4CK}$, the result of a substitution in a conditional context is provably strongly equivalent to the original formula only in case the substitution itself is a substitution of strong equivalents and that, therefore, the substitution format for $\boxto$ in $\mathsf{N4CK}$ is set in accordance with the pattern given in Proposition \ref{P:n4-basics}.4.\footnote{To the best of our knowledge, the idea to use the bi-set-indexed accessibility relations in the semantics of $\mathsf{FDE}$-based conditional logics was first expressed in \cite[Remark 1]{wu}; later a similar idea was used by the authors of \cite{ds} in setting up the neighborhood semantics for some $\mathsf{FDE}$-based modal logics. The logic $\mathsf{FDE}$ of \textit{first-degree entailment} is a well-studied non-classical logic introduced in \cite{anderson}; it can be obtained by omitting $\to$ from $\mathsf{N4}$.}
\end{remark}
Given an $\mathcal{M}\in CNel$ and an $X \subseteq W$, we say that $X$ is \textit{upward-closed in $\mathcal{M}$} iff $(\forall w \in X)(\forall v \geq w)(v \in X)$. Definition \ref{D:model} clearly implies that, for every $\mathcal{M}\in CNel$ and for every $p \in Var$, both elements of $\|p\|_\mathcal{M} = (V^+(p), V^-(p))$ are upward-closed in $\mathcal{M}$. The latter observation can be lifted to the level of arbitrary formulas:
\begin{lemma}\label{L:chellas-monotonicity}
Let $\mathcal{M}\in CNel$, and let $\phi \in \mathcal{L}_{\boxto}$. Then both elements of $\|\phi\|_\mathcal{M}$ are upward-closed in $\mathcal{M}$. 
\end{lemma}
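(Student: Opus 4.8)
The plan is to proceed by induction on the construction of $\phi \in \mathcal{L}_{\boxto}$, proving simultaneously that $\|\phi\|^+_\mathcal{M}$ and $\|\phi\|^-_\mathcal{M}$ are both upward-closed in $\mathcal{M}$. The statement for atomic $\phi = p$ is exactly the observation recorded just before the lemma, namely that $V^+(p)$ and $V^-(p)$ are upward-closed by condition \eqref{Cond:mon} in Definition \ref{D:n4-model}, which is inherited by Nelsonian conditional models via Definition \ref{D:model}. For the propositional connectives $\wedge$, $\vee$, $\sim$, the verification and falsification clauses are the ones already used for $\mathsf{N4}$, so the induction steps are the standard ones: for $\wedge$ and $\vee$ one uses that finite intersections and unions of upward-closed sets are upward-closed (applied to $\|\psi\|^\star$ and $\|\chi\|^\star$ in the appropriate combination dictated by clauses \eqref{Cl:con+}–\eqref{Cl:dis-}); for $\sim$ one simply notes that $\|\sim\psi\|^+_\mathcal{M} = \|\psi\|^-_\mathcal{M}$ and $\|\sim\psi\|^-_\mathcal{M} = \|\psi\|^+_\mathcal{M}$ by clauses \eqref{Cl:neg+}, \eqref{Cl:neg-}, so both sets are upward-closed directly by the IH. The implication case $\phi = \psi \to \chi$ is likewise routine: verification of $\psi \to \chi$ is stated with an explicit universal quantifier over $\leq$-successors, which makes $\|\psi\to\chi\|^+_\mathcal{M}$ upward-closed by transitivity of $\leq$; and falsification, $\mathcal{M}, w\models^- \psi\to\chi$ iff $\mathcal{M},w\models^+\psi$ and $\mathcal{M},w\models^-\chi$, makes $\|\psi\to\chi\|^-_\mathcal{M}$ the intersection of two sets upward-closed by IH.

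The only genuinely new step is $\phi = \psi \boxto \chi$, and this is where I expect the main work to lie. For verification, $\mathcal{M}, w\models^+ \psi\boxto\chi$ iff for all $v \geq w$ and all $u$ with $R_{\|\psi\|_\mathcal{M}}(v,u)$ we have $\mathcal{M}, u\models^+\chi$; since this condition is stated with an explicit leading $\forall v \geq w$, if it holds at $w$ and $w' \geq w$ then by transitivity of $\leq$ every $\leq$-successor of $w'$ is a $\leq$-successor of $w$, so the condition holds at $w'$ too — hence $\|\psi\boxto\chi\|^+_\mathcal{M}$ is upward-closed, and no model conditions are needed here. The interesting case is falsification: $\mathcal{M}, w\models^- \psi\boxto\chi$ iff there exists $u$ with $R_{\|\psi\|_\mathcal{M}}(w,u)$ and $\mathcal{M}, u\models^-\chi$. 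Suppose this holds at $w$ and let $w' \geq w$; I must produce a witness $u'$ with $R_{\|\psi\|_\mathcal{M}}(w', u')$ and $\mathcal{M}, u'\models^-\chi$. Here is exactly where condition \eqref{Cond:2}, i.e. $(R_{(X,Y)}\circ\leq) \subseteq (\leq\circ R_{(X,Y)})$, comes in: from $R_{\|\psi\|_\mathcal{M}}(w,u)$ and $w \leq w'$ we have $(w, w') \in R_{\|\psi\|_\mathcal{M}}\circ\leq$ — wait, more carefully, I need to set this up so that the pair $(w', \cdot)$ lands in the image. The right reading: I have $w \mathrel{R_{\|\psi\|}} u$ and $w \leq w'$; condition \eqref{Cond:2} as drawn in the second diagram says that given the straight-line $w \mathrel{R} v$ (here $v := u$) together with $w \leq w'$, one can find $v'$ with $w' \mathrel{R} v'$ and $u = v \leq v'$. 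So set $u' := v'$; then $R_{\|\psi\|_\mathcal{M}}(w', u')$ holds, and since $u \leq u'$ and $\mathcal{M}, u\models^-\chi$, the IH (that $\|\chi\|^-_\mathcal{M}$ is upward-closed) gives $\mathcal{M}, u'\models^-\chi$. This completes the $\boxto$ case.

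One subtlety worth flagging: in invoking condition \eqref{Cond:2} one must apply it to the relation $R_{(X,Y)}$ with $(X,Y) = \|\psi\|_\mathcal{M}$, which is a legitimate instance since Definition \ref{D:model} imposes \eqref{Cond:1} and \eqref{Cond:2} for \emph{all} $X, Y \subseteq W$; and the index $\|\psi\|_\mathcal{M}$ is held fixed throughout — upward-closedness of $\|\psi\|_\mathcal{M}$ (which the IH also gives) is not actually needed for this particular step, though it is reassuring that the indexing set is itself well-behaved. For completeness one could also record, as a corollary of the same argument read through the $\diamondto$ clauses, that $\|\psi\diamondto\chi\|^\pm_\mathcal{M}$ are upward-closed — but since $\diamondto$ is only an abbreviation, this is already subsumed under the $\sim$ and $\boxto$ cases. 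I do not anticipate any obstacle beyond correctly tracking which of \eqref{Cond:1}, \eqref{Cond:2} is needed for which polarity; the role of \eqref{Cond:1} will surface in a later lemma (presumably the analogue of persistence under $\leq$ in the other direction, or in the bisimulation/filtration arguments), not here.
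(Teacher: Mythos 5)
Your overall plan---a simultaneous induction on $\phi$ for the two components of $\|\phi\|_\mathcal{M}$, with the propositional cases routine, the $\models^+$-clause for $\boxto$ persistent simply because of its built-in quantifier $\forall v\geq w$ and transitivity of $\leq$, and the only substantive work in the falsification case for $\boxto$---is exactly the induction the paper intends (its proof is omitted as ``easy''). But in that one substantive step you appeal to the wrong frame condition. From $R_{\|\psi\|_\mathcal{M}}(w,u)$, $\mathcal{M},u\models^-\chi$ and $w\leq w'$ you need a $u'$ with $R_{\|\psi\|_\mathcal{M}}(w',u')$ and $u\leq u'$. With the paper's composition convention, the pair $(w',u)$ lies in $\leq^{-1}\circ R_{\|\psi\|_\mathcal{M}}$, so the completion you want is precisely what \eqref{Cond:1} delivers: $(w',u)\in R_{\|\psi\|_\mathcal{M}}\circ\leq^{-1}$, i.e.\ there is $u'\geq u$ with $R_{\|\psi\|_\mathcal{M}}(w',u')$, after which the IH for $\|\chi\|^-_\mathcal{M}$ finishes the case. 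Condition \eqref{Cond:2}, which you cite, says something different: from $w\mathrel{R_{(X,Y)}}u\leq u'$ it produces a $w''\geq w$ with $w''\mathrel{R_{(X,Y)}}u'$---it completes the square when the given $\leq$-step sits on the successor side, and it yields no witness at all in your configuration (your ``reading of the second diagram'' is in fact a reading of the first one). As written, the justification of the key step fails.

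The repair is immediate---replace the appeal to \eqref{Cond:2} by \eqref{Cond:1}---but note that this also falsifies your closing remark that \eqref{Cond:1} ``will surface in a later lemma, not here'': \eqref{Cond:1} is exactly the condition this lemma lives on (equivalently, it is what makes the verification clause of the defined $\diamondto$ persistent), whereas \eqref{Cond:2} is the condition that is not needed for monotonicity and earns its keep elsewhere (e.g.\ in the soundness argument for \eqref{E:a3}). Everything else in your write-up---the atomic case via \eqref{Cond:mon}, the propositional steps, the observation that no frame condition is needed for $\|\psi\boxto\chi\|^+_\mathcal{M}$, and the remark that $\diamondto$ is subsumed under the $\sim$ and $\boxto$ cases---is correct.
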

We omit the easy proof by induction on the construction of $\phi \in \mathcal{L}_{\boxto}$. It is easy to see, next, that Proposition \ref{P:n4-satisfiable} can be extended to $\mathsf{N4CK}$:
\begin{proposition}\label{P:satisfiable}
For every $\Gamma_{\boxto} \subseteq \mathcal{L}$, $(\Gamma, \emptyset)\notin \mathsf{N4CK}$.	
\end{proposition}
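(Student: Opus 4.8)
The plan is to mimic the proof sketch of Proposition \ref{P:n4-satisfiable}, extending the single-world model used there to a conditional Nelsonian model in which every formula of $\mathcal{L}_{\boxto}$ is verified at the unique world. So I would take $W := \{w\}$, let $\leq$ and $R$ be the maximal relations available (i.e.\ $\leq := \{(w,w)\}$ and $R := \{w\}\times(\mathcal{P}(W)\times\mathcal{P}(W))\times\{w\}$, so that $R_{(X,Y)}(w,w)$ holds for all $X,Y\subseteq W$), and set $V^+(p) := V^-(p) := \{w\}$ for all $p\in Var$. I would first check that this structure is indeed in $CNel$: condition \eqref{Cond:mon} is trivial since there is only one world, and conditions \eqref{Cond:1} and \eqref{Cond:2} hold because both $\leq^{-1}\circ R_{(X,Y)}$, $R_{(X,Y)}\circ\leq^{-1}$, $R_{(X,Y)}\circ\leq$ and $\leq\circ R_{(X,Y)}$ all equal $\{(w,w)\}$.

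Next I would prove by induction on the construction of $\phi\in\mathcal{L}_{\boxto}$ that $\mathcal{M}, w\models^+\phi$ for every $\phi$. The atomic case is immediate from the choice of $V^+$. For the propositional connectives the argument is exactly as in Proposition \ref{P:n4-satisfiable}; the only subtlety is that the induction must be run simultaneously for $\models^+$ and $\models^-$, so the actual statement to prove is that for every $\phi$ we have both $\mathcal{M}, w\models^+\phi$ and $\mathcal{M}, w\models^-\phi$. With that strengthened IH, each clause is routine: e.g.\ $\models^+\psi\wedge\chi$ and $\models^-\psi\wedge\chi$ follow from the IH for $\psi$ and $\chi$ via \eqref{Cl:con+} and \eqref{Cl:con-}, the negation clauses \eqref{Cl:neg+}, \eqref{Cl:neg-} just swap $\models^+$ and $\models^-$, and for implication, \eqref{Cl:im+} holds because $v\geq w$ forces $v=w$ and the IH gives $\mathcal{M}, w\models^+\chi$, while \eqref{Cl:im-} follows from the IH for $\psi$ and $\chi$.

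The one genuinely new case is $\boxto$. For $\mathcal{M}, w\models^+\psi\boxto\chi$ I would use the verification clause: the only $v\geq w$ is $w$ itself and the only $u$ with $R_{\|\psi\|_\mathcal{M}}(w,u)$ is $w$, and by the IH $\mathcal{M}, w\models^+\chi$, so the clause is satisfied. For $\mathcal{M}, w\models^-\psi\boxto\chi$ I would use the falsification clause: we need some $u$ with $R_{\|\psi\|_\mathcal{M}}(w,u)$ and $\mathcal{M}, u\models^-\chi$, and $u:=w$ works since $R$ is the maximal relation and the IH gives $\mathcal{M}, w\models^-\chi$. Here the choice of $R$ as the full relation on $W$ is exactly what is needed; note also that since $\|\psi\|_\mathcal{M}$ is some bi-set of subsets of the singleton $W$, there is really nothing to compute about it. Once the induction is complete, it follows that $\mathcal{M}, w\not\models(\Gamma,\emptyset)$ is false — more precisely, $\mathcal{M}, w\models(\Gamma,\emptyset)$ holds vacuously on the $\Delta$-side while all of $\Gamma$ is verified — hence $(\Gamma,\emptyset)\notin\mathsf{N4CK}$ for every $\Gamma\subseteq\mathcal{L}_{\boxto}$ (the statement is phrased for $\Gamma\subseteq\mathcal{L}$, but the argument covers the larger language as well).

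I do not expect a real obstacle here; the only thing to be careful about is running the induction for $\models^+$ and $\models^-$ jointly rather than separately, and recording that the degenerate one-world frame with the maximal $R$ satisfies \eqref{Cond:1} and \eqref{Cond:2}. Everything else is a direct transcription of the sketch given for Proposition \ref{P:n4-satisfiable}.
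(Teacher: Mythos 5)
Your proof is correct and takes essentially the same route as the paper: a one-world model with $V^+(p)=V^-(p)=\{w\}$ in which a simultaneous induction shows every formula of $\mathcal{L}_{\boxto}$ is both verified and falsified at $w$, so that $(\Gamma,\emptyset)$ is satisfied there. The only (immaterial) difference is the choice of $R$: the paper adds just the single triple $(w,(\{w\},\{w\}),w)$, which suffices because the joint induction gives $\|\psi\|_{\mathcal{M}}=(\{w\},\{w\})$ for every $\psi$, whereas you take the maximal relation indexed by all bi-sets; both choices satisfy \eqref{Cond:1} and \eqref{Cond:2} and make the $\boxto$ clauses go through.
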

\begin{proof}[Proof (a sketch)]
	We consider model $\mathcal{M}$ as defined in the proof of Proposition \ref{P:n4-satisfiable} and extend it with the relation $R:= \{(w, (\{w\}, \{w\}), w)\}$. The rest of the argument is as in the proof of Proposition \ref{P:n4-satisfiable}.
\end{proof}
We close this subsection by extending the statements made in Proposition \ref{P:dp-basics} about $\mathsf{N4}$ to $\mathsf{N4CK}$:
\begin{proposition}\label{P:dp}
$\mathsf{N4CK}$ satisfies both DP and CFP.	
\end{proposition}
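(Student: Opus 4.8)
The plan is to prove both properties by the standard ``rooted model gluing'' construction, with the monotonicity lemma (Lemma~\ref{L:chellas-monotonicity}) supplying the essential step. For each of DP and CFP the implication from a disjunct to the whole formula is immediate from the semantic clauses: if $\mathcal{M},w\models^+\phi$ at every pointed model then $\mathcal{M},w\models^+\phi\vee\psi$ at every pointed model by \eqref{Cl:dis+}, and since $\mathcal{M},w\models^+\sim\phi$ iff $\mathcal{M},w\models^-\phi$ by \eqref{Cl:neg+}, if $\mathcal{M},w\models^+\sim\phi$ everywhere then $\mathcal{M},w\models^+\sim(\phi\wedge\psi)$ everywhere by \eqref{Cl:con-}; likewise starting from $\psi$. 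So the content is in the two converse directions, and both will use one and the same construction.

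Argue contrapositively. For DP, assume $\phi\notin\mathsf{N4CK}$ and $\psi\notin\mathsf{N4CK}$ and fix disjoint pointed models $(\mathcal{M}_1,w_1)$ and $(\mathcal{M}_2,w_2)$ with $\mathcal{M}_1,w_1\not\models^+\phi$ and $\mathcal{M}_2,w_2\not\models^+\psi$; for CFP, assume $\sim\phi\notin\mathsf{N4CK}$ and $\sim\psi\notin\mathsf{N4CK}$, so by \eqref{Cl:neg+} there are disjoint pointed models with $\mathcal{M}_1,w_1\not\models^-\phi$ and $\mathcal{M}_2,w_2\not\models^-\psi$. In either case pick a fresh world $r$ and let $\mathcal{M}=(W,\leq,R,V^+,V^-)$ be given by $W:=W_1\cup W_2\cup\{r\}$, $\leq\,:=\,\leq_1\cup\leq_2\cup(\{r\}\times W)$, $V^\star(p):=V^\star_1(p)\cup V^\star_2(p)$ for atoms $p$ and $\star\in\{+,-\}$ (so $r$ neither verifies nor falsifies any atom), and $R$ the set of all triples $(v,(X,Y),u)$ with $v,u\in W_i$ and $(v,(X\cap W_i,Y\cap W_i),u)\in R_i$ for some $i\in\{1,2\}$ --- so $r$ starts no $R$-arrow. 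Then $\mathcal{M}\in CNel$: monotonicity of $V^\pm$ is clear since $r$ lies in no $V^\star(p)$ and $\leq$ restricts to $\leq_i$ on $W_i$; and because $r$ has no $R$-successor, every instance of \eqref{Cond:1} or \eqref{Cond:2} is confined to a single $W_i$, on which $\leq$ and $R$ agree with $\leq_i$ and $R_i$, so these conditions are inherited from $\mathcal{M}_1,\mathcal{M}_2$.

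The heart of the argument is the \emph{agreement lemma}: for $i\in\{1,2\}$, $x\in W_i$ and $\chi\in\mathcal{L}_{\boxto}$ one has $\mathcal{M},x\models^+\chi$ iff $\mathcal{M}_i,x\models^+\chi$, and similarly for $\models^-$, by a single simultaneous induction on $\chi$. The Boolean, $\sim$- and $\to$-cases are routine, the last using that no world outside $W_i$ is $\leq$-above $x$; the $\boxto$-case additionally uses the induction hypothesis for the antecedent to see that the two components of $\|\psi\|_\mathcal{M}$ meet $W_i$ in precisely the components of $\|\psi\|_{\mathcal{M}_i}$, whence $R_{\|\psi\|_\mathcal{M}}$ restricted to arrows out of $W_i$ coincides with $R_{i,\|\psi\|_{\mathcal{M}_i}}$, so that both clauses for $\boxto$ transfer between $\mathcal{M}$ and $\mathcal{M}_i$ at $x$. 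With the lemma in hand: in the DP case $\mathcal{M},w_1\not\models^+\phi$ and $\mathcal{M},w_2\not\models^+\psi$, and since $r\leq w_1$, $r\leq w_2$ and $\|\phi\|^+_\mathcal{M},\|\psi\|^+_\mathcal{M}$ are upward-closed (Lemma~\ref{L:chellas-monotonicity}), we get $\mathcal{M},r\not\models^+\phi$ and $\mathcal{M},r\not\models^+\psi$, hence $\mathcal{M},r\not\models^+\phi\vee\psi$ and $\phi\vee\psi\notin\mathsf{N4CK}$. In the CFP case the same reasoning with $\models^-$ in place of $\models^+$ (the components $\|\phi\|^-_\mathcal{M},\|\psi\|^-_\mathcal{M}$ are upward-closed too) gives $\mathcal{M},r\not\models^-\phi$ and $\mathcal{M},r\not\models^-\psi$, hence $\mathcal{M},r\not\models^-\phi\wedge\psi$ by \eqref{Cl:con-}, i.e.\ $\mathcal{M},r\not\models^+\sim(\phi\wedge\psi)$ and $\sim(\phi\wedge\psi)\notin\mathsf{N4CK}$.

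I expect the one genuinely delicate spot to be the $\boxto$-case of the agreement lemma, together with verifying \eqref{Cond:1}--\eqref{Cond:2} for the glued model: the bi-set-indexed accessibility relation has to be spliced so that on each summand it depends only on the $W_i$-parts of its index, and giving the new root $r$ no outgoing $R$-arrows is exactly what makes both of these go through cleanly.
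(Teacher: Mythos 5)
Your proposal is correct and follows essentially the same route as the paper's own proof: a fresh root placed $\leq$-below the disjoint union of two countermodels, with the bi-set-indexed relation $R$ spliced componentwise, an agreement claim proved by induction, and Lemma~\ref{L:chellas-monotonicity} used to transfer the failures down to the root (the paper treats CFP with the brief remark that the argument is similar, which you simply spell out).
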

\begin{proof}
	We argue for the satisfaction of DP first. The right-to-left direction is trivial. As for the other direction, assume, towards contradiction, that $\phi_1\vee\phi_2 \in \mathsf{N4CK}$, but both  $\phi_1 \notin \mathsf{N4CK}$ and $\phi_2 \notin \mathsf{N4CK}$. Then we can choose pointed models $(\mathcal{M}_1,w_1)$ and $(\mathcal{M}_2,w_2)$ such that $\mathcal{M}_i,w_i\not\models_c\phi_i$ for all $i \in \{1,2\}$; we may assume, wlog, that $W_1\cap W_2 = \emptyset$. 
	We then choose an element $w$ outside $W_1\cup W_2$ and define the following pointed model $(\mathcal{M},w)$ for which we set:
	\begin{align*}
		W &:= \{w\}\cup W_1\cup W_2\\
		\leq &:= \{(w,v)\mid v \in W\} \cup \leq_1 \cup \leq_2\\
		R &:= \{(v,(X, Y), u)\mid v,u \in W_1,\,(v,(X\cap W_1, Y\cap W_1), u)\in R_1\}\cup\\
		&\qquad\qquad\qquad\qquad\cup \{(v,(X,Y),u)\mid v,u \in W_2,\,(v,(X\cap W_2, Y\cap W_2), u)\in R_2\}\\
		V^\star(p) &:= V^\star_1(p)\cup V^\star_2(p)\qquad\qquad\qquad \text{for }p \in Var,\,\star\in\{+,-\}
	\end{align*}
We show that $\mathcal{M}\in CNel$. The only non-trivial part is the satisfaction of conditions \eqref{Cond:1} and \eqref{Cond:2} from Definition \ref{D:model}.

As for \eqref{Cond:1}, assume that some $v',v,u \in W$ and $X, Y\subseteq W$ are such that $v'\geq v\mathrel{R_{(X,Y)}}u$. Then, by defintion of $R$, we must have either $v,u \in W_1$ or $v,u\in W_2$. Assume, wlog, that $v,u \in W_1$. Then we must have, first, that $v\mathrel{(R_1)_{(X\cap W_1, Y\cap W_1)}}u$, and, second, that $v'\geq_1 v$ so that also $v'\in W_1$. But then, since $\mathcal{M}_1$ satisfies  \eqref{Cond:1}, there must be a $u'\in W_1$ such that $v'\mathrel{(R_1)_{(X\cap W_1, Y\cap W_1)}}u'\geq_1u$ whence clearly also  $v'\mathrel{R_{(X,Y)}}u'\geq u$, so that  \eqref{Cond:1} is shown to hold for $\mathcal{M}$. We argue similarly for  \eqref{Cond:2}.

The following claim can be shown by induction on the construction of $\phi \in\mathcal{L}_{\boxto}$:

\textit{Claim}. For every $\star \in \{+, -\}$, every $i\in \{1,2\}$, every $v \in W_i$, and every $\phi \in \mathcal{L}_{\boxto}$, we have $\mathcal{M}, v\models^\star\phi$ iff $\mathcal{M}_i, v\models^\star\phi$. 

It follows now that $\mathcal{M}, w_i\not\models\phi_i$ for all $i \in \{1,2\}$, and, since we have $w\leq w_1, w_2$, Lemma \ref{L:chellas-monotonicity} implies that  $\mathcal{M}, w\not\models\phi_i$ for all $i \in \{1,2\}$, or, equivalently, that $\mathcal{M}, w\not\models_c\phi_1\vee\phi_2$, contrary to our assumption. The obtained contradiction shows that $\mathsf{N4CK}$ must have DP.

The argument for CFP is similar.
\end{proof}

\subsection{Axiomatization}\label{sub:axiomatization}
In this subsection, we obtain a sound and (strongly) complete axiomatization of $\mathsf{N4CK}$. We consider the Hilbert-style axiomatic system $\mathbb{N}4\mathbb{CK}$, for which we set $\mathbb{N}4\mathbb{CK}:= \mathbb{N}4+(\eqref{E:a1}-\eqref{E:a4};\eqref{E:RAbox},\eqref{E:RCbox1},\eqref{E:RCbox2})$, where we assume that:
\begin{align}
	((\phi \boxto \psi)\wedge(\phi \boxto \chi))&\Leftrightarrow(\phi \boxto (\psi \wedge \chi))\label{E:a1}\tag{A1}\\
(\sim(\phi\boxto\psi)\wedge (\phi \boxto \chi))&\to\sim(\phi \boxto(\psi\vee\sim\chi))\label{E:a2}\tag{A2}\\
	((\phi \diamondto \psi)\to(\phi \boxto \chi))&\to(\phi \boxto (\psi \to \chi))\label{E:a3}\tag{A3}\\
	\phi\boxto (\psi&\to\psi)\label{E:a4}\tag{A4}\\
	\text{From }\phi\Leftrightarrow\psi &\text{ infer } (\phi\boxto\chi)\Leftrightarrow(\psi\boxto\chi)\label{E:RAbox}\tag{RA$\Box$}\\
	\text{From }\phi\leftrightarrow\psi &\text{ infer } (\chi\boxto\phi)\leftrightarrow(\chi\boxto\psi)\label{E:RCbox1}\tag{RC$\Box$1}\\
	\text{From }\sim\phi\leftrightarrow\sim\psi &\text{ infer } \sim(\chi\boxto\phi)\leftrightarrow\sim(\chi\boxto\psi)\label{E:RCbox2}\tag{RC$\Box$2}	
\end{align}
Within this subsection, in addition to assuming all the notions introduced in Section \ref{S:Prel}, we will also write $\vdash$ and $\vDdash$ to mean $\vdash_{\mathbb{N}4\mathbb{CK}}$ and $\vDdash_{\mathbb{N}4\mathbb{CK}}$, respectively, to avoid the clutter.

Before we go on to prove the soundness and completeness of $\mathbb{N}4\mathbb{CK}$ relative to $\mathsf{N4CK}$, we would like to quickly address the relations between $\mathbb{N}4\mathbb{CK}$ and $\mathsf{N4}$: 
\begin{lemma}\label{L:n4}
	The following statements hold:
	\begin{enumerate}
		\item If $\Gamma, \Delta \subseteq\mathcal{L}$ are such that $\Gamma\models_{\mathsf{N4}}\Delta$, and  $\Gamma', \Delta' \subseteq \mathcal{L}_{\boxto}$ are obtained from $\Gamma, \Delta$ by a simultaneous substitution of $\mathcal{L}_{\boxto}$-formulas for variables, then $\Gamma'\vdash\Delta'$. Moreover, Deduction Theorem holds for $\mathbb{N}4\mathbb{CK}$ in that for all $\Gamma \cup \{\phi,\psi\}\subseteq \mathcal{L}_{\boxto}$ we have $\Gamma \vdash \phi\to\psi$ iff $\Gamma, \phi\vdash \psi$.
		
		\item If $\phi \in \mathcal{L}$, then $\vdash \phi$ iff $\phi\in \mathsf{N4}$.
\end{enumerate}
\end{lemma}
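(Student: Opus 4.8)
The plan is to prove the two parts of Lemma \ref{L:n4} separately, with part 1 doing most of the work and part 2 following quickly from it together with the soundness direction (which, strictly speaking, is established later, but which we may invoke as it is part of the announced programme; alternatively part 2 can be stated as depending on the completeness/soundness results of this subsection).

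For part 1, first I would observe that $\mathbb{N}4\mathbb{CK}$ contains $\mathbb{N}4$ as a sub-system by construction, so every axiom scheme and rule of $\mathbb{N}4$ is available in $\mathbb{N}4\mathbb{CK}$. Since provability in a Hilbert system is closed under arbitrary substitution of formulas for propositional variables (a proof in $\mathbb{N}4$ of a formula $\chi$ from premises $\Gamma$ transforms, under a substitution $s:Var\to\mathcal{L}_{\boxto}$, into a proof in $\mathbb{N}4\mathbb{CK}$ of $s(\chi)$ from $s(\Gamma)$, because substitution commutes with the schematic axioms and with \eqref{E:mp}), the hypothesis $\Gamma\models_{\mathsf{N4}}\Delta$ — which by Proposition \ref{P:prop}.1 is the same as $\Gamma\vdash_{\mathbb{N}4}\Delta$, i.e. the derivability of some disjunction $\theta_1\vee\cdots\vee\theta_s$ with $\theta_j\in\Delta$ from $\Gamma$ in $\mathbb{N}4$ — immediately yields the corresponding derivation in $\mathbb{N}4\mathbb{CK}$, hence $\Gamma'\vdash\Delta'$. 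For the Deduction Theorem, the standard Hilbert-style argument goes through verbatim because $\mathbb{N}4\mathbb{CK}$ contains $\alpha_1,\alpha_2$ and has \eqref{E:mp} as its only $\vDdash$-rule other than the conditional rules; the one point needing a remark is that the extra rules \eqref{E:RAbox}, \eqref{E:RCbox1}, \eqref{E:RCbox2} are rules about \emph{theorems} (they take provable premises to provable conclusions), so they never interact with the undischarged assumption $\phi$ in a $\vDdash$-derivation, and the usual induction on the length of the derivation of $\psi$ from $\Gamma\cup\{\phi\}$ is unaffected. I would spell this out as the single genuinely careful point in the proof of part 1.

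For part 2, the left-to-right direction ($\vdash\phi$ implies $\phi\in\mathsf{N4}$, for $\phi\in\mathcal{L}$) is where soundness enters: every axiom of $\mathbb{N}4\mathbb{CK}$ is valid in $CNel$ under $\models^+$ and every rule preserves this validity (this is exactly the soundness half of the completeness theorem proved later in the subsection), so $\vdash\phi$ gives $\phi\in\mathsf{N4CK}$; and since a Nelsonian model is just the $\boxto$-free reduct of a conditional Nelsonian model (every $\mathcal{M}\in Nel$ extends to some $\mathcal{M}'\in CNel$, e.g. by adding any $R$ satisfying \eqref{Cond:1},\eqref{Cond:2}, such as the empty relation, and on $\mathcal{L}$-formulas $\models^+$ agrees with $\models^+_{\mathsf{N4}}$), validity in $CNel$ restricted to $\mathcal{L}$ is just $\mathsf{N4}$-validity, so $\phi\in\mathsf{N4}$. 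The right-to-left direction is the special case of part 1 with the identity substitution and $\Gamma=\emptyset$, $\Delta=\{\phi\}$: $\phi\in\mathsf{N4}$ gives $\vdash_{\mathbb{N}4}\phi$ by Proposition \ref{P:prop}.1, hence $\vdash\phi$.

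The main obstacle is really just bookkeeping rather than a deep difficulty: one must be careful that the conditional rules \eqref{E:RAbox}--\eqref{E:RCbox2} do not break the Deduction Theorem, and one must be honest about the fact that part 2's harder direction presupposes the soundness of $\mathbb{N}4\mathbb{CK}$, which is formally established only afterwards; I would handle this either by forward-referencing the soundness lemma or by noting that the conservativity claim in part 2 is in any case a corollary of the soundness-and-completeness theorem of this subsection. No new technical machinery is needed beyond what is already in Sections \ref{S:Prel} and \ref{S:basis}.
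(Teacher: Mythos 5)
Your proposal is correct, but for the non-trivial direction of Part 2 it takes a genuinely different route from the paper. The paper's argument is purely syntactic and self-contained: given a proof of $\phi\in\mathcal{L}$ in $\mathbb{N}4\mathbb{CK}$, replace every subformula of the form $\chi\boxto\theta$ by $p\to p$ for a fixed $p\in Var$; under this replacement every instance of \eqref{E:a1}--\eqref{E:a4} and every application of \eqref{E:RAbox}, \eqref{E:RCbox1}, \eqref{E:RCbox2} collapses to something provable in $\mathbb{N}4$, while \eqref{E:mp}-steps and $\mathbb{N}4$-axioms are untouched, so the transformed sequence is (extendable to) an $\mathbb{N}4$-proof of $\phi$, and Proposition \ref{P:prop}.1 finishes the job. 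You instead go through semantics: soundness of $\mathbb{N}4\mathbb{CK}$ with respect to $CNel$ (Lemma \ref{L:soundness}, proved afterwards) plus the observation that any $\mathcal{M}\in Nel$ expands to a member of $CNel$ by adjoining, say, the empty $R$, on which $\models^+$ agrees with $\models^+_{\mathsf{N4}}$ for $\boxto$-free formulas. This is a valid argument and not circular (the soundness proof nowhere uses Lemma \ref{L:n4}), and you flag the forward reference honestly; what the paper's route buys is that the lemma stays a purely proof-theoretic conservativity fact, independent of the model theory, and is available without any reordering of the text, whereas your route buys a conceptually transparent "reduct/expansion" picture at the cost of importing soundness. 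Your treatment of Part 1 and of the Deduction Theorem matches what the paper dismisses as trivial; the only nitpick is that the Deduction Theorem concerns $\vdash$, for which by definition only \eqref{E:mp} may be applied to premise-dependent formulas, so the remark about \eqref{E:RAbox}--\eqref{E:RCbox2} not interacting with the open assumption is automatic rather than something requiring an argument about $\vDdash$-derivations.
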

\begin{proof}[Proof (a sketch)]
	Part 1 is trivial. As for Part 2, its ($\Leftarrow$)-part is also trivial, and its ($\Rightarrow$)-part follows from the observation that, given a proof of $\phi \in \mathcal{L}_n$ in $\mathbb{N}4\mathbb{CK}$ we can turn it into a proof in $\mathbb{N}4$ by replacing all its subformulas of the form $\chi\boxto\theta$ with  $p\to p$ for some fixed $p \in Var$ and apply Proposition \ref{P:prop}.1. 
\end{proof}
Turning now to the relations between $\mathbb{N}4\mathbb{CK}$ and $\mathsf{N4CK}$, we observe, first, that  $\mathbb{N}4\mathbb{CK}$ only allows us to deduce theorems of $\mathsf{N4CK}$:
\begin{lemma}\label{L:soundness}
	For every $\phi\in\mathcal{L}_{\boxto}$, if $\vdash\phi$, then $\phi\in\mathsf{N4CK}$.
\end{lemma}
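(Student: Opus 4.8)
The plan is to prove soundness in the standard way: show that every axiom of $\mathbb{N}4\mathbb{CK}$ is $\mathsf{N4CK}$-valid and that every inference rule preserves $\mathsf{N4CK}$-validity, then conclude by induction on the length of a derivation. Since $\mathbb{N}4\mathbb{CK} = \mathbb{N}4 + (\eqref{E:a1}-\eqref{E:a4};\eqref{E:RAbox},\eqref{E:RCbox1},\eqref{E:RCbox2})$, and since the clauses defining $\models^+$ and $\models^-$ on $CNel$ restricted to $\mathcal{L}$-connectives coincide with the $\mathsf{N4}$ clauses, Proposition \ref{P:prop}.1 together with Lemma \ref{L:n4}.1 shows that the $\mathbb{N}4$ part (axioms $\alpha_1$--$\alpha_8$, \eqref{E:a0.1}--\eqref{E:a0.4}, and \eqref{E:mp}) is already sound; I would state this and move on to the genuinely new axioms and rules. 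Here it is convenient to first record the semantic characterizations of the defined connectives $\leftrightarrow$, $\Rightarrow$, $\Leftrightarrow$ already computed in Section \ref{S:basis}: $\mathcal{M},w\models^+\phi\Leftrightarrow\psi$ iff for all $v\geq w$ both $\phi,\psi$ have the same $\models^+$-status and the same $\models^-$-status at $v$; likewise for $\leftrightarrow$ but only concerning $\models^+$.

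The core of the argument is then a case-by-case verification. For \eqref{E:a1}, I unfold the $\boxto$-clauses: $\mathcal{M},w\models^+ (\phi\boxto\psi)\wedge(\phi\boxto\chi)$ says every $R_{\|\phi\|}$-successor of every $v\geq w$ verifies both $\psi$ and $\chi$, which by $(\wedge+)$ is exactly verification of $\phi\boxto(\psi\wedge\chi)$; and on the $\models^-$ side, $\mathcal{M},w\models^-(\phi\boxto\psi)\wedge(\phi\boxto\chi)$ iff some $R_{\|\phi\|}$-successor $u$ of $w$ falsifies $\psi$ or $\chi$, matching $(\wedge-)$ read through the $\models^-$-clause for $\boxto$ — so both sides of the $\Leftrightarrow$ have equal status at every point, giving validity. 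Axiom \eqref{E:a4} is immediate since $\psi\to\psi$ is verified everywhere, so every $R$-successor verifies it. For \eqref{E:a2} and \eqref{E:a3} I would argue directly from the unfolded clauses, using that $\|\phi\|$ is the same bi-set in antecedent position on both sides (so the accessibility relation $R_{\|\phi\|}$ does not change) and reasoning about a single witnessing or universally quantified successor; \eqref{E:a3} additionally needs the computed $\diamondto$-clause $\mathcal{M},w\models^+\phi\diamondto\psi$ iff some $R_{\|\phi\|}$-successor of $w$ verifies $\psi$. For the rules: \eqref{E:RAbox} needs that if $\phi\Leftrightarrow\psi$ is valid then $\|\phi\|_\mathcal{M}=\|\psi\|_\mathcal{M}$ in every $\mathcal{M}\in CNel$ — this follows because strong equivalence forces both the $\models^+$-set and the $\models^-$-set to coincide at every world — and hence $R_{\|\phi\|}=R_{\|\psi\|}$, which makes $(\phi\boxto\chi)\Leftrightarrow(\psi\boxto\chi)$ valid; \eqref{E:RCbox1} and \eqref{E:RCbox2} follow from monotonicity-free pointwise reasoning on the consequent, using that $\phi\leftrightarrow\psi$ valid gives equal $\models^+$-sets (for \eqref{E:RCbox1}) and $\sim\phi\leftrightarrow\sim\psi$ valid gives equal $\models^-$-sets (for \eqref{E:RCbox2}), which are exactly the sets consulted by the $\models^+$- and $\models^-$-clauses for $\boxto$ respectively.

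The main obstacle, modest as it is, will be \eqref{E:a2}: one must keep careful track of which quantifier (the existential in $\models^-(\phi\boxto\psi)$ versus the universal in $\models^+(\phi\boxto\chi)$) ranges over which successor, and verify that the witness for falsifying $\phi\boxto(\psi\vee\sim\chi)$ can be taken to be the witness $u$ falsifying $\phi\boxto\psi$ — for that $u$ one has $\mathcal{M},u\models^-\psi$, and since $u$ is in particular an $R_{\|\phi\|}$-successor of $w$ (hence of $w$ itself, using reflexivity of $\leq$ to absorb the $v\geq w$), the hypothesis $\mathcal{M},w\models^+\phi\boxto\chi$ yields $\mathcal{M},u\models^+\chi$, i.e. $\mathcal{M},u\not\models^-\sim\chi$ is not what is wanted — rather $\mathcal{M},u\models^+\chi$ gives $\mathcal{M},u\models^-\sim\chi$ by $(\sim-)$, so $\mathcal{M},u\models^-\psi\vee\sim\chi$ by $(\vee-)$, witnessing $\mathcal{M},w\models^-\phi\boxto(\psi\vee\sim\chi)$. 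Once all axioms and rules are handled, a routine induction on derivation length, noting that $\mathsf{N4CK}$-validity is closed under the derivable-rule machinery (\eqref{E:mp} and the three $\boxto$-rules all preserve membership in $\mathsf{N4CK}$), completes the proof that $\vdash\phi$ implies $\phi\in\mathsf{N4CK}$.
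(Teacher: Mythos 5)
Your overall strategy --- validity of every axiom, preservation of validity by every rule, induction on the length of a derivation --- is exactly the paper's ``usual method'', and your case analyses for \eqref{E:a1}, \eqref{E:a2}, \eqref{E:a4} and the three rules \eqref{E:RAbox}, \eqref{E:RCbox1}, \eqref{E:RCbox2} are correct. Two points, however, are genuinely under-justified. First, the propositional part: Proposition \ref{P:prop}.1 and Lemma \ref{L:n4}.1 are facts about $\mathsf{N4}$-consequence and about \emph{derivability} in $\mathbb{N}4\mathbb{CK}$, so they cannot by themselves yield validity over $CNel$ of the $\mathbb{N}4$ axiom schemes once their schematic letters are instantiated by $\boxto$-formulas. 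What is actually needed is the heredity Lemma \ref{L:chellas-monotonicity} (whose proof rests on \eqref{Cond:1}): for instance $\alpha_1$ instantiated with $\sim(\chi\boxto\theta)$ is valid only because falsification of $\boxto$-formulas is upward-closed; the clean way to finish is to treat the substituted formulas as valuations of fresh variables, which is legitimate precisely because their truth/falsity sets are upward-closed, and then appeal to the coincidence of the propositional clauses.

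Second, and more importantly, \eqref{E:a3} cannot be handled by the kind of direct unfolding you describe: it is \emph{not} valid on structures lacking \eqref{Cond:2} (a three-point countermodel with $R_{\|\phi\|}(v,u)$, $u\leq u'$, $\psi$ verified only at $u'$ and $\chi$ nowhere refutes it), so any verification must invoke that condition. Concretely: given $v'\geq v$, $R_{\|\phi\|}(v',u)$ and $u'\geq u$ with $u'\models^+\psi$, one uses \eqref{Cond:2} to produce $v''\geq v'$ with $R_{\|\phi\|}(v'',u')$; then $v''\models^+\phi\diamondto\psi$, the antecedent gives $v''\models^+\phi\boxto\chi$, and hence $u'\models^+\chi$. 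Your sketch never mentions the interaction conditions \eqref{Cond:1}, \eqref{Cond:2} at all, yet they are exactly where this soundness proof is not routine; by contrast the case you single out as the main obstacle, \eqref{E:a2}, needs neither of them (only reflexivity of $\leq$), as your own argument shows.
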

The proof proceeds by the usual method, i.e. we show that all the axioms are valid and that the rules of $\mathbb{N}4\mathbb{CK}$ preserve the validity. We are now going to show the converse of Lemma \ref{L:soundness}, and we start our work by proving some theorems and derived rules in $\mathbb{N}4\mathbb{CK}$, which we collect in the following lemma: 
\begin{lemma}\label{L:theorems}
	Let $\phi, \psi, \chi \in \mathcal{L}_{\boxto}$. The following theorems and derived rules can be deduced in $\mathbb{N}4\mathbb{CK}$:
	\begin{align}
		\phi \Leftrightarrow \psi &\vDdash (\chi \boxto \phi) \Leftrightarrow (\chi \boxto \psi)\label{E:RCbox}\tag{RC$\Box$}\\
		\phi &\vDdash (\psi\boxto \phi)\label{E:Rnec}\tag{Nec}\\
		(\phi \to \psi) &\vDdash (\chi\boxto\phi)\to(\chi\boxto\psi)\label{E:Rmbox}\tag{RM$\Box$}\\
		(\sim\phi \to \sim\psi) &\vDdash \sim(\chi\boxto\phi)\to\sim(\chi\boxto\psi)\label{E:Rnbox}\tag{RM$\sim\Box$}\\
		(\phi \to \psi) &\vDdash (\chi\diamondto\phi)\to(\chi\diamondto\psi)\label{E:Rmdiam}\tag{RM$\Diamond$}\\
		(\phi\boxto(\psi \to \chi))&\to((\phi\boxto\chi)\to(\phi\boxto\chi))\label{E:T1}\tag{T1}\\
		(\phi\boxto(\sim\psi \to \sim\chi))&\to(\sim(\phi\boxto\psi)\to\sim(\phi\boxto\chi))\label{E:T2}\tag{T2}\\
		((\phi\diamondto(\psi \to \sim\chi))&\wedge(\phi\boxto\psi))\to\sim(\phi\boxto\chi))\label{E:T3}\tag{T3}\\
		(\phi\diamondto(\psi \vee \chi))&\to((\phi\diamondto\psi)\vee(\phi\diamondto\chi))\label{E:T4}\tag{T4}\\
		(\phi\diamondto (\psi\to \chi))&\to((\phi\boxto\psi)\to(\phi\diamondto\psi))\label{E:T5}\tag{T5}\\
		\sim(\phi\boxto\psi)&\leftrightarrow(\phi\diamondto\sim\psi)\label{E:T6}\tag{T6}
	\end{align}
\end{lemma}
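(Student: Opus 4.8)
The plan is to prove the items in the order listed, so that each is available to the later ones, and to rely throughout on Lemma~\ref{L:n4}.1: it lets me (a) quote any $\mathsf{N4}$-valid propositional formula or derived propositional rule --- in particular the De Morgan laws \eqref{E:a0.1}--\eqref{E:a0.4}, the propositional interderivability of $\phi\Leftrightarrow\psi$ with $(\phi\leftrightarrow\psi)\wedge(\sim\phi\leftrightarrow\sim\psi)$, and the usual currying and conjunction manipulations --- and (b) invoke the Deduction Theorem to reduce each implicational theorem \eqref{E:T1}--\eqref{E:T5} to a derivation of its consequent from its antecedents. The genuinely new resources are then just the axioms \eqref{E:a1}, \eqref{E:a2}, \eqref{E:a4} and the primitive rules \eqref{E:RAbox}, \eqref{E:RCbox1}, \eqref{E:RCbox2}; I do not expect \eqref{E:a3} to be needed for this particular lemma.

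I would dispatch the derived rules first. For \eqref{E:RCbox}, I extract from $\phi\Leftrightarrow\psi$ both $\phi\leftrightarrow\psi$ and $\sim\phi\leftrightarrow\sim\psi$, feed them to \eqref{E:RCbox1} and \eqref{E:RCbox2} respectively, and reassemble the four resulting implications into $(\chi\boxto\phi)\Leftrightarrow(\chi\boxto\psi)$. For \eqref{E:Rnec}, the hypothesis $\phi$ yields $\phi\leftrightarrow(\chi\to\chi)$, so \eqref{E:RCbox1} turns a suitable instance of \eqref{E:a4} into $\psi\boxto\phi$. For \eqref{E:Rmbox}, from $\phi\to\psi$ one gets $\phi\leftrightarrow(\phi\wedge\psi)$, whence \eqref{E:RCbox1} gives $(\chi\boxto\phi)\leftrightarrow(\chi\boxto(\phi\wedge\psi))$, and composing with the forward half of \eqref{E:a1} and with $(\phi\wedge\psi)\to\psi$ yields $(\chi\boxto\phi)\to(\chi\boxto\psi)$. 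Rule \eqref{E:Rnbox} follows the same template on the falsified side: from $\sim\phi\to\sim\psi$ one derives $\sim(\phi\wedge\psi)\leftrightarrow\sim\psi$ using \eqref{E:a0.2}, feeds it to \eqref{E:RCbox2}, and uses the negated part of \eqref{E:a1} (again via \eqref{E:a0.2}) to pass between $\sim(\chi\boxto(\phi\wedge\psi))$ and $\sim(\chi\boxto\phi)\vee\sim(\chi\boxto\psi)$. Finally \eqref{E:Rmdiam} reduces to \eqref{E:Rnbox}: unfold $\diamondto$ by its abbreviation, note that $\phi\to\psi$ gives $\sim\sim\phi\to\sim\sim\psi$ by \eqref{E:a0.1}, and apply \eqref{E:Rnbox}.

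Among the theorems, \eqref{E:T6} is immediate: $\sim\psi\leftrightarrow\sim\sim\sim\psi$ is an instance of \eqref{E:a0.1}, so \eqref{E:RCbox2} gives $\sim(\phi\boxto\psi)\leftrightarrow\sim(\phi\boxto\sim\sim\psi)$, and the right-hand side is $\phi\diamondto\sim\psi$ by definition. \eqref{E:T1} is the $\boxto$-analogue of axiom $K$: from $\phi\boxto(\psi\to\chi)$ and $\phi\boxto\psi$, the forward half of \eqref{E:a1} gives $\phi\boxto((\psi\to\chi)\wedge\psi)$, whence $\phi\boxto\chi$ by \eqref{E:Rmbox}. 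For \eqref{E:T4}, the laws \eqref{E:a0.1} and \eqref{E:a0.2} let \eqref{E:RCbox2} rewrite its antecedent as $\sim(\phi\boxto(\sim\psi\wedge\sim\chi))$, and then \eqref{E:a1} together with \eqref{E:a0.2} split the conjunctive consequent down to $(\phi\diamondto\psi)\vee(\phi\diamondto\chi)$. The cluster \eqref{E:T2}, \eqref{E:T3}, \eqref{E:T5} I would route through one auxiliary: the substitution $\psi\mapsto\sim\sigma$, $\chi\mapsto\rho$ in \eqref{E:a2} gives $(\phi\diamondto\sigma)\wedge(\phi\boxto\rho)\to\sim(\phi\boxto(\sim\sigma\vee\sim\rho))$, and since $\sim(\sim\sigma\vee\sim\rho)\leftrightarrow\sim\sim(\rho\wedge\sigma)$ is $\mathsf{N4}$-provable (by \eqref{E:a0.3} and \eqref{E:a0.1}), rule \eqref{E:RCbox2} converts the consequent into $\phi\diamondto(\rho\wedge\sigma)$; composing with \eqref{E:Rmdiam} then yields both $(\phi\diamondto\sigma)\wedge(\phi\boxto\rho)\to\phi\diamondto\rho$ and, on taking $\rho:=\rho_0\to\sigma_0$, a ``$\diamondto$ modus ponens'' $(\phi\boxto(\rho_0\to\sigma_0))\wedge(\phi\diamondto\rho_0)\to\phi\diamondto\sigma_0$. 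Instantiating the first of these with $\sigma:=\psi\to\chi$, $\rho:=\psi$ and currying gives \eqref{E:T5}; instantiating it with $\sigma:=\psi\to\sim\chi$, $\rho:=\psi$, passing once more through \eqref{E:Rmdiam} via $\psi\wedge(\psi\to\sim\chi)\to\sim\chi$, and rewriting $\phi\diamondto\sim\chi$ as $\sim(\phi\boxto\chi)$ by \eqref{E:T6} gives \eqref{E:T3}; and instantiating the ``$\diamondto$ modus ponens'' with $\rho_0:=\sim\psi$, $\sigma_0:=\sim\chi$ and applying \eqref{E:T6} to $\phi\diamondto\sim\psi$ and $\phi\diamondto\sim\chi$ gives \eqref{E:T2}.

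I expect the main difficulty to be bookkeeping with strong negation rather than anything conceptual. Because $\mathsf{N4}$ is paraconsistent, tempting shortcuts --- rewriting $\sim(\psi\to\chi)$ as $\sim\psi\vee\chi$, or extracting $\sim\chi$ from $\sim\psi\wedge\psi$ --- are unavailable, so one must keep careful track of which substitution rule licenses each rewrite inside a conditional context: \eqref{E:RAbox} needs a \emph{strong} equivalence of the antecedents, \eqref{E:RCbox1} a plain equivalence of the consequents, and \eqref{E:RCbox2} a plain equivalence of their strong negations. Consequently the De Morgan laws \eqref{E:a0.1}--\eqref{E:a0.4} have to be deployed in exactly the right order, so that an intermediate consequent is brought to the target form (for instance $\sim\sim(\rho\wedge\sigma)$, not the classically-equivalent $\sim\psi\vee\chi$) before \eqref{E:RCbox2} is applied. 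Once this discipline is maintained, every step sketched above is routine.
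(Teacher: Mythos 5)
Your derivations are correct, and for most items --- \eqref{E:RCbox}, \eqref{E:Rnec}, \eqref{E:Rmbox}, \eqref{E:Rnbox}, \eqref{E:Rmdiam}, \eqref{E:T1}, \eqref{E:T4}, \eqref{E:T6} --- they coincide essentially step for step with the paper's Appendix~\ref{A:1}, including your observation that \eqref{E:a3} is never needed (in \eqref{E:Rmbox} the half of \eqref{E:a1} you actually use is the one taking $\chi\boxto(\phi\wedge\psi)$ to the conjunction, a purely verbal slip). The genuine difference is how you factor the cluster \eqref{E:T2}, \eqref{E:T3}, \eqref{E:T5}: the paper proves \eqref{E:T2} and \eqref{E:T3} each directly from a tailored instance of \eqref{E:a2} followed by \eqref{E:Rnbox}, and then obtains \eqref{E:T5} from \eqref{E:T2} via \eqref{E:Rmbox}, whereas you extract from \eqref{E:a2} (via \eqref{E:RCbox2} and \eqref{E:Rmdiam}) one reusable principle $(\phi\diamondto\sigma)\wedge(\phi\boxto\rho)\to(\phi\diamondto(\rho\wedge\sigma))$ together with a ``$\diamondto$ modus ponens'', and read all three off it with the help of \eqref{E:T6}. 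Both routes use exactly the same primitive resources; yours is somewhat more modular and reusable, the paper's is shorter item by item. One point to flag: \eqref{E:T1} and \eqref{E:T5} as printed contain apparent typos (the consequent $(\phi\boxto\chi)\to(\phi\boxto\chi)$, resp.\ the final $\phi\diamondto\psi$); the appendix in fact derives the $K$-style forms $((\phi\boxto\psi)\wedge(\phi\boxto(\psi\to\chi)))\to(\phi\boxto\chi)$ and $(\phi\boxto\psi)\to((\phi\diamondto(\psi\to\chi))\to(\phi\diamondto\chi))$, and the latter is what Lemma~\ref{L:embed} later needs. You prove the intended \eqref{E:T1} but only the literal \eqref{E:T5}; this is not a gap in your argument, and your auxiliary yields the intended version at once --- instantiate it with $\sigma:=\psi\to\chi$, $\rho:=\psi$ and pass through \eqref{E:Rmdiam} using $(\psi\wedge(\psi\to\chi))\to\chi$ rather than $(\psi\wedge(\psi\to\chi))\to\psi$.
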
 
The sketch of its proof is relegated to Appendix \ref{A:1}.

A bi-set $(\Gamma, \Delta)\in \mathcal{P}(\mathcal{L}_{\boxto})\times\mathcal{P}(\mathcal{L}_{\boxto})$ is called \textit{consistent} iff  $\Gamma \not\vdash \Delta$ and \textit{complete} iff $\Gamma\cup\Delta = \mathcal{L}_{\boxto}$; it is called \textit{maximal} iff it is both complete and consistent. Note that, in view of Lemma \ref{L:n4}.1, this definition allows for the following equivalent form:
\begin{lemma}\label{L:alt-consistency}
	A bi-set $(\Gamma, \Delta)\in \mathcal{P}(\mathcal{L}_{\boxto})\times\mathcal{P}(\mathcal{L}_{\boxto})$ is inconsistent iff, for some $m,n\in \omega$ some $\phi_1,\ldots,\phi_n\in \Gamma$ and some $\psi_1,\ldots,\psi_m\in \Delta$ we have: $
	\bigwedge^n_{i = 1}\phi_i\vdash\bigvee^m_{j = 1}\psi_j$, or, equivalently, $\vdash
	\bigwedge^n_{i = 1}\phi_i\to\bigvee^m_{j = 1}\psi_j$.
\end{lemma}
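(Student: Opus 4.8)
The plan is to unfold the definition of inconsistency given in Section~\ref{S:Prel} and massage it using the standard Hilbert-style machinery that is available here thanks to Lemma~\ref{L:n4}.1. Recall that $(\Gamma,\Delta)$ is inconsistent iff $\Gamma\vdash\Delta$, i.e.\ there is a derivation sequence $\chi_1,\ldots,\chi_r$ of $\mathcal{L}_{\boxto}$-formulas in which each $\chi_i$ is in $\Gamma$, or provable in $\mathbb{N}4\mathbb{CK}$, or follows by \eqref{E:mp} from earlier members, and such that $\chi_r=\psi_1\vee\cdots\vee\psi_m$ for some $\psi_1,\ldots,\psi_m\in\Delta$ (with $m>0$). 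First I would observe that, since a derivation uses only finitely many formulas from $\Gamma$, say $\phi_1,\ldots,\phi_n$, the existence of such a sequence is exactly the assertion $\phi_1,\ldots,\phi_n\vDdash_{\mathbb{N}4\mathbb{CK}}\psi_1\vee\cdots\vee\psi_m$ (here one uses that $\eqref{E:mp}$ is the only rule needed to reshuffle a $\vdash$-derivation, together with the fact that provable formulas coincide in the $\vdash$ and $\vDdash$ readings). So the statement reduces to: $(\Gamma,\Delta)$ inconsistent iff for some finite $\phi_1,\ldots,\phi_n\in\Gamma$ and $\psi_1,\ldots,\psi_m\in\Delta$ we have $\phi_1,\ldots,\phi_n\vDdash\psi_1\vee\cdots\vee\psi_m$.

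Next I would convert the multi-premise $\vDdash$ into the single implication form. By the Deduction Theorem for $\mathbb{N}4\mathbb{CK}$ (Lemma~\ref{L:n4}.1), iterated $n$ times, $\phi_1,\ldots,\phi_n\vDdash\psi_1\vee\cdots\vee\psi_m$ holds iff $\vDdash\phi_1\to(\phi_2\to(\cdots\to(\phi_n\to\bigvee_{j}\psi_j)\cdots))$, and by the import/export theorems of the positive base $\mathbb{IL}^+$ (instances of $\alpha_1$–$\alpha_5$, transferred to $\mathbb{N}4\mathbb{CK}$ via Lemma~\ref{L:n4}.1) this is equivalent to $\vdash\bigwedge_{i=1}^{n}\phi_i\to\bigvee_{j=1}^{m}\psi_j$; and one more application of the Deduction Theorem (in the form $\vdash\alpha\to\beta$ iff $\alpha\vdash\beta$, i.e.\ $\alpha\vDdash\beta$) gives $\bigwedge_{i}\phi_i\vdash\bigvee_{j}\psi_j$, which is the two equivalent forms stated in the lemma. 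Both directions of each of these steps are available, so the biconditional goes through; I would also note the harmless boundary convention (if $n=0$ read the conjunction as a fixed theorem, e.g.\ $p_1\to p_1$, which does not change provability), and recall that $m>0$ is forced by the definition of $\vdash$ so no empty disjunction arises.

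The only point requiring a little care — and the place I would expect a referee to look — is the passage between the two descriptions of "inconsistent'': the definitional $\Gamma\vdash\Delta$ (which bakes in the disjunction $\chi_r=\theta_1\vee\cdots\vee\theta_s$ at the very end of the sequence) and the finite-premise $\vDdash$ statement. Here one must check that allowing intermediate applications of \eqref{E:mp} and insertions of theorems does not let the derivation "see'' more of $\Gamma$ than the finitely many formulas actually written down, and that conversely any $\vDdash$-derivation of $\bigvee_j\psi_j$ from finitely many $\phi_i\in\Gamma$ already witnesses $\Gamma\vdash\Delta$ verbatim. Both are immediate from the definitions in Section~\ref{S:Prel} once the finiteness of derivations is invoked, so the main body of the proof is genuinely routine; I would present it compactly as a chain of equivalences and omit the Hilbert-calculus calculations, citing Lemma~\ref{L:n4} for the Deduction Theorem and the substitution-closed transfer of $\mathsf{N4}$-theorems.
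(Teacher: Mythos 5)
Your core argument is the right one, and it is exactly what the paper has in mind when it derives this lemma "in view of Lemma \ref{L:n4}.1" without further comment: a $\vdash$-derivation witnessing $\Gamma\vdash\Delta$ uses only finitely many premises $\phi_1,\ldots,\phi_n\in\Gamma$ and ends in $\psi_1\vee\ldots\vee\psi_m$ with $\psi_j\in\Delta$; the Deduction Theorem of Lemma \ref{L:n4}.1 plus import/export of conjunctions (an $\mathsf{N4}$-consequence, transferred by Lemma \ref{L:n4}.1) then converts this into $\vdash\bigwedge_i\phi_i\to\bigvee_j\psi_j$, equivalently $\bigwedge_i\phi_i\vdash\bigvee_j\psi_j$, and conversely an MP-only derivation (premises, the adjunction axioms, the provable implication, MP) turns the finite form back into a witness of $\Gamma\vdash\Delta$. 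Your boundary remarks ($m>0$ forced, $n=0$ handled by a fixed theorem) are also fine.

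The one step that is wrong as stated is the bridge you yourself flag as the delicate point: the claim that $\Gamma\vdash\{\chi\}$ for finite $\Gamma$ is "exactly" $\Gamma\vDdash\chi$, and the ensuing application of the Deduction Theorem to $\vDdash$. In $\mathbb{N}4\mathbb{CK}$ these two relations do not coincide, because a $\vDdash$-sequence may apply the primitive rules \eqref{E:RAbox}, \eqref{E:RCbox1}, \eqref{E:RCbox2} to premise-dependent formulas, which a $\vdash$-sequence cannot: for instance $p\Leftrightarrow q\vDdash(p\boxto r)\Leftrightarrow(q\boxto r)$ by \eqref{E:RAbox}, yet $(p\Leftrightarrow q)\to((p\boxto r)\Leftrightarrow(q\boxto r))$ is not $\mathsf{N4CK}$-valid (local truth of $p\Leftrightarrow q$ at a world does not force $\|p\|_\mathcal{M}=\|q\|_\mathcal{M}$), so by soundness $p\Leftrightarrow q\not\vdash(p\boxto r)\Leftrightarrow(q\boxto r)$. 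For the same reason the Deduction Theorem fails for $\vDdash$; Lemma \ref{L:n4}.1 states it, and it is only true, for $\vdash$. The repair is immediate and shortens your argument: never leave $\vdash$. The definition of $\Gamma\vdash\Delta$ already restricts the rule applications to \eqref{E:mp}, so discarding unused premises gives $\{\phi_1,\ldots,\phi_n\}\vdash\bigvee_j\psi_j$ directly, to which the Deduction Theorem of Lemma \ref{L:n4}.1 applies verbatim; in the converse direction, from $\vdash\bigwedge_i\phi_i\to\bigvee_j\psi_j$ you write down an MP-only sequence from $\Gamma$, which is literally a witness of $\Gamma\vdash\Delta$. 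With that adjustment the proof is correct and coincides with the paper's intended justification.
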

The next two lemmas present some properties of the consistent and maximal bi-sets, respectively:
\begin{lemma}\label{L:consistent}
	Let	$(\Gamma, \Delta)\in \mathcal{P}(\mathcal{L}_{\boxto})\times\mathcal{P}(\mathcal{L}_{\boxto})$ be consistent. Then the following statements hold:
	\begin{enumerate}
		\item For every $\phi \in \mathcal{L}_{\boxto}$, either $(\Gamma \cup \{\phi\}, \Delta)$ or $(\Gamma, \Delta\cup \{\phi\})$ is consistent.
		
		\item For every $\phi \to \psi \in \Delta$, $(\Gamma \cup \{\phi\}, \{\psi\})$ is consistent.
		
		\item For every $\phi\boxto\psi \in \Delta$, the bi-set $(\{\chi\mid\phi\boxto\chi \in \Gamma\},\{\psi\})$ is consistent.
		
		\item For every $\sim(\phi\boxto\psi) \in \Gamma$, the bi-set $(\{\sim\psi\}\cup\{\chi\mid\phi\boxto\chi \in \Gamma\},\{\sim\theta\mid\sim(\phi\boxto\theta)\in\Delta\})$ is consistent.
	\end{enumerate}
\end{lemma}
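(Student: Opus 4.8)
The plan is to prove all four parts by contraposition: assume that the bi-set in question is inconsistent and then, using Lemma~\ref{L:alt-consistency} and the Deduction Theorem (Lemma~\ref{L:n4}.1), distill a single $\mathbb{N}4\mathbb{CK}$-provable implication from which one can reassemble a derivation witnessing $\Gamma\vdash\Delta$, contradicting the consistency of $(\Gamma,\Delta)$. Throughout I would use that $\mathbb{N}4\mathbb{CK}$ extends $\mathbb{N}4$, hence $\mathbb{IL}^+$, so that all positive intuitionistic reasoning and the De Morgan / double-negation laws \eqref{E:a0.1}--\eqref{E:a0.4} are at hand, together with the theorems and derived rules of Lemma~\ref{L:theorems}.

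Parts~1--3 should be routine. For part~1, were both $(\Gamma\cup\{\phi\},\Delta)$ and $(\Gamma,\Delta\cup\{\phi\})$ inconsistent, Lemma~\ref{L:alt-consistency} would provide finite subsets together with $\vdash(\bigwedge\Gamma_0\wedge\phi)\to\bigvee\Delta_0$ and $\vdash\bigwedge\Gamma_1\to(\bigvee\Delta_1\vee\phi)$ (where $\phi$ genuinely occurs in the second disjunction, else $(\Gamma,\Delta)$ is already inconsistent), and a case split on $\phi$ yields $\vdash\bigwedge(\Gamma_0\cup\Gamma_1)\to\bigvee(\Delta_0\cup\Delta_1)$. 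For part~2, inconsistency of $(\Gamma\cup\{\phi\},\{\psi\})$ means $\Gamma,\phi\vdash\psi$, so $\Gamma\vdash\phi\to\psi$ by the Deduction Theorem, contradicting $\phi\to\psi\in\Delta$. For part~3, inconsistency of $(\{\chi\mid\phi\boxto\chi\in\Gamma\},\{\psi\})$ yields either $\vdash\psi$, in which case \eqref{E:Rnec} gives $\vdash\phi\boxto\psi$, or else $\chi_1,\ldots,\chi_n$ with $\phi\boxto\chi_i\in\Gamma$ and $\vdash(\chi_1\wedge\ldots\wedge\chi_n)\to\psi$; in the latter case I would iterate \eqref{E:a1} to derive $\phi\boxto(\chi_1\wedge\ldots\wedge\chi_n)$ from $(\phi\boxto\chi_1),\ldots,(\phi\boxto\chi_n)$ and then apply \eqref{E:Rmbox} to the displayed implication, getting $\Gamma\vdash\phi\boxto\psi$. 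Since $\phi\boxto\psi\in\Delta$, this is the desired contradiction.

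Part~4 is the one I expect to be the real obstacle, since it forces one to reason syntactically about falsification of $\boxto$. Suppose $(\{\sim\psi\}\cup\{\chi\mid\phi\boxto\chi\in\Gamma\},\{\sim\theta\mid\sim(\phi\boxto\theta)\in\Delta\})$ is inconsistent; its right component is then nonempty (an inconsistent bi-set has nonempty right component, by the definition of $\vdash$), so Lemma~\ref{L:alt-consistency} plus the Deduction Theorem give $\chi_1,\ldots,\chi_n$ with $\phi\boxto\chi_i\in\Gamma$ and $\theta_1,\ldots,\theta_m$ (with $m\geq 1$) such that $\sim(\phi\boxto\theta_j)\in\Delta$ and, writing $X:=\chi_1\wedge\ldots\wedge\chi_n$ (inserting a $\psi\to\psi$ conjunct if $n=0$, so that $\vdash\phi\boxto X$ holds by \eqref{E:a1} or \eqref{E:a4}), we have $\vdash(\sim\psi\wedge X)\to(\sim\theta_1\vee\ldots\vee\sim\theta_m)$. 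The key chain is then: apply \eqref{E:a2} to $\sim(\phi\boxto\psi)\in\Gamma$ and $\phi\boxto X$ to obtain $\Gamma\vdash\sim(\phi\boxto(\psi\vee\sim X))$; rewrite the hypothesis, using \eqref{E:a0.1} and \eqref{E:a0.3} to get $\vdash\sim(\psi\vee\sim X)\leftrightarrow(\sim\psi\wedge X)$ and \eqref{E:a0.2} to get $\vdash(\sim\theta_1\vee\ldots\vee\sim\theta_m)\leftrightarrow\sim(\theta_1\wedge\ldots\wedge\theta_m)$, so that it becomes $\vdash\sim(\psi\vee\sim X)\to\sim(\theta_1\wedge\ldots\wedge\theta_m)$; feed this into \eqref{E:Rnbox} to get $\Gamma\vdash\sim(\phi\boxto(\theta_1\wedge\ldots\wedge\theta_m))$; and finally, unfolding the \emph{strong} equivalence \eqref{E:a1} (its $\sim$-halves) and applying \eqref{E:a0.2} gives $\vdash\sim(\phi\boxto(\theta_1\wedge\ldots\wedge\theta_m))\to(\sim(\phi\boxto\theta_1)\vee\ldots\vee\sim(\phi\boxto\theta_m))$. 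Hence $\Gamma\vdash\sim(\phi\boxto\theta_1)\vee\ldots\vee\sim(\phi\boxto\theta_m)$, a disjunction whose disjuncts all lie in $\Delta$, contradicting the consistency of $(\Gamma,\Delta)$. The hard part will be spotting this chain — in particular, noticing that the De Morgan identity $\sim(\psi\vee\sim X)\leftrightarrow(\sim\psi\wedge X)$ is exactly what makes the antecedent produced by \eqref{E:a2} match the shape of the implication-hypothesis so that \eqref{E:Rnbox} applies; everything else is bookkeeping about finite conjunctions, including the degenerate cases $n=0$ and $m=0$ (the latter making the bi-set trivially consistent).
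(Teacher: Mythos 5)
Your proposal is correct and follows essentially the same route as the paper: contraposition via Lemma \ref{L:alt-consistency}, with Parts 1--3 handled just as in the paper and Part 4 closed by the same final step (the negative half of \eqref{E:a1} plus \eqref{E:a0.2}) turning $\sim(\phi\boxto\bigwedge_j\theta_j)$ into the disjunction of the $\sim(\phi\boxto\theta_j)\in\Delta$. The only cosmetic difference is that in Part 4 you reach $\sim(\phi\boxto\bigwedge_j\theta_j)$ by applying \eqref{E:a2} and \eqref{E:Rnbox} directly to the De Morgan-rewritten hypothesis, which merely inlines the paper's appeal to the derived theorem \eqref{E:T2}, itself obtained from \eqref{E:a2} and \eqref{E:Rnbox} in Appendix \ref{A:1}.
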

\begin{proof}
Parts 1 and 2 are proved as in the case of $\mathsf{N4}$ (in which respect $\mathsf{N4}$ also just repeats the similar reasoning for $\mathsf{IL}$). As for Part 3, assume that  $\phi\boxto\psi \in \Delta$, and assume, towards  contradiction, that the bi-set $(\{\chi\mid\phi\boxto\chi \in \Gamma\},\{\psi\})$ is inconsistent. Then there must be $\phi\boxto\chi_1,\ldots,\phi\boxto\chi_n \in \Gamma$ such that, for $\chi:= \bigwedge^n_{i= 1}\chi_i$, we have $\chi\vdash \psi$, hence also $\vdash\chi\to\psi$ by Lemma \ref{L:n4}.1 and $\vdash(\phi\boxto\chi)\to(\phi\boxto\psi)$ by \eqref{E:Rmbox}. Next, \eqref{E:a1} implies that $\Gamma\vdash\phi\boxto\chi$, whence also $\Gamma\vdash\phi\boxto\psi$. But then  the assumption that $\phi\boxto\psi \in \Delta$ clearly contradicts the consistency of $(\Gamma,\Delta)$. The obtained contradiction shows that $(\{\chi\mid\phi\boxto\chi \in \Gamma\},\{\psi\})$ must be consistent.

Finally, as for Part 4, assume that $\sim(\phi\boxto\psi) \in \Gamma$, and assume, towards contradiction, that the bi-set $(\{\sim\psi\}\cup\{\chi\mid\phi\boxto\chi \in \Gamma\},\{\sim\theta\mid\sim(\phi\boxto\theta)\in\Delta\})$ is inconsistent. Then there must be some $\phi\boxto\chi_1,\ldots,\phi\boxto\chi_n \in \Gamma$ and $\sim(\phi\boxto\theta_1),\ldots,\sim(\phi\boxto\theta_m) \in \Delta$ such that, for $\chi:= \bigwedge^n_{i= 1}\chi_i$ and $\theta:= \bigwedge^m_{j= 1}\theta_j$,  we have $\chi,\sim\psi\vdash \sim\theta$. We then reason as follows:
\begin{align}
	&\Gamma\vdash\phi\boxto\chi\label{E:cons4}&&\text{by \eqref{E:a1}}\\
	&\vdash\chi\to(\sim\psi\to \sim\theta)\label{E:cons5}&&\text{by Lemma \ref{L:n4}.1}\\
	&\vdash(\phi\boxto\chi)\to(\phi\boxto(\sim\psi\to \sim\theta))\label{E:cons6}&&\text{by \eqref{E:cons5}, \eqref{E:Rmbox}}\\
	&\Gamma\vdash\sim(\phi\boxto\psi)\to \sim(\phi\boxto\theta)\label{E:cons8}&&\text{by \eqref{E:cons4},\eqref{E:cons6},\eqref{E:T2}}\\
	&\Gamma\vdash\sim(\phi\boxto\theta)\label{E:cons9}&&\text{by \eqref{E:cons8},$\sim(\phi\boxto\psi) \in \Gamma$}\\
	&\Gamma\vdash\sim(\phi\boxto\theta_1\wedge\ldots\wedge\phi\boxto\theta_m)\label{E:cons10}&&\text{by \eqref{E:cons9}, \eqref{E:a1}}\\		&\Gamma\vdash\sim(\phi\boxto\theta_1)\vee\ldots\vee\sim(\phi\boxto\theta_m)\label{E:cons11}&&\text{by \eqref{E:cons10}, \eqref{E:a0.2}}
\end{align}
It follows now from \eqref{E:cons11} that $(\Gamma,\Delta)$ must be inconsistent, which contradicts our initial assumption. The obtained contradiction shows that the bi-set $(\{\sim\psi\}\cup\{\chi\mid\phi\boxto\chi \in \Gamma\},\{\sim\theta\mid\sim(\phi\boxto\theta)\in\Delta\})$ must be, in fact, consistent. 
\end{proof} 
\begin{lemma}\label{L:maximal}
	Let	$(\Gamma, \Delta), (\Gamma_0,\Delta_0), (\Gamma_1,\Delta_1) \in \mathcal{P}(\mathcal{L}_{\boxto})\times\mathcal{P}(\mathcal{L}_{\boxto})$ be maximal, let $\phi,\psi\in\mathcal{L}_{\boxto}$. Then the following statements are true:
	\begin{enumerate}
		\item If $\Gamma\vdash\phi$, then $\phi\in \Gamma$.
		
		\item $\phi\wedge\psi\in\Gamma$ iff $\phi, \psi\in \Gamma$.
		
		\item $\phi\vee\psi \in \Gamma$ iff $\phi \in \Gamma$ or $\psi\in\Gamma$.
		
		\item If $\phi\to\psi, \phi \in \Gamma$, then $\psi \in \Gamma$.
		
	\item $\sim(\phi\wedge\psi)\in\Gamma$ iff $\sim\phi\in \Gamma$ or $\sim\psi\in \Gamma$.
	
	\item $\sim(\phi\vee\psi) \in \Gamma$ iff $\sim\phi, \sim\psi\in\Gamma$.
	
	\item $\sim(\phi\to\psi) \in \Gamma$ iff $\phi, \sim\psi \in \Gamma$.
		
		\item If $\Gamma_0 \subseteq \Gamma$, $\{\psi\mid \phi\boxto\psi\in \Gamma_0\}\subseteq \Gamma_1$, and $\{\sim(\phi\boxto\psi)\mid \sim\psi\in \Gamma_1\}\subseteq \Gamma_0$ then
		 
		\noindent$(\Gamma_1 \cup \{\psi\mid\phi\boxto\psi\in \Gamma\}, \{\sim\psi\mid\sim(\phi\boxto\psi)\in \Delta\})$ is consistent.
		
		\item If $\Gamma_1 \subseteq \Gamma$, $\{\psi\mid \phi\boxto\psi\in \Gamma_0\}\subseteq \Gamma_1$, and $\{\sim(\phi\boxto\psi)\mid \sim\psi\in \Gamma_1\}\subseteq \Gamma_0$ then
		 
		\noindent$(\Gamma_0 \cup \{\sim(\phi\boxto\psi)\mid\sim\psi\in \Gamma\}, \{\phi\boxto\psi\mid\psi\in \Delta\})$ is consistent.
	\end{enumerate}
\end{lemma}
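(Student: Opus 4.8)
The plan is to dispatch items 1--7 by the routine ``prime theory'' arguments and to reduce items 8 and 9 --- which are precisely the back-and-forth steps that will later verify conditions \eqref{Cond:1} and \eqref{Cond:2} in the canonical model --- each to a single short derivation inside $\mathbb{N}4\mathbb{CK}$. For items 1--7: item 1 is immediate, since $\Gamma\vdash\phi$ with $\phi\notin\Gamma$ would give $\phi\in\Delta$ by completeness and hence $\Gamma\vdash\Delta$, contradicting consistency; in particular $\Gamma\cap\Delta=\emptyset$ for every maximal $(\Gamma,\Delta)$. Items 2--4 then follow from item 1 together with $(\alpha_1)$, $(\alpha_3)$--$(\alpha_8)$, \eqref{E:mp} and the Deduction Theorem (Lemma~\ref{L:n4}.1), exactly as in $\mathsf{N4}$; items 5--7 are the corresponding facts about the strong negation of a conjunction, disjunction and implication and follow from items 1--4 plus \eqref{E:a0.1}--\eqref{E:a0.4}. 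I will use without comment the evident finitary generalisations of items 2, 3, 5, 6 to iterated conjunctions and disjunctions.

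For item 8 I would argue by contradiction: suppose the displayed bi-set is inconsistent. By Lemma~\ref{L:alt-consistency} --- after contracting the $\boxto$-conjuncts with the iterated form of \eqref{E:a1} and items 1, 2, contracting the right-hand disjunction $\sim\gamma_1\vee\dots\vee\sim\gamma_m$ into $\sim\gamma$ with $\gamma:=\gamma_1\wedge\dots\wedge\gamma_m$ via \eqref{E:a0.2}, and using \eqref{E:a1}, \eqref{E:a0.2} with items 1, 3 to see that $\sim(\phi\boxto\gamma)\in\Delta$ follows from $\sim(\phi\boxto\gamma_t)\in\Delta$ --- there are $\alpha\in\Gamma_1$, $\phi\boxto\beta\in\Gamma$ and $\sim(\phi\boxto\gamma)\in\Delta$ with $\vdash(\alpha\wedge\beta)\to\sim\gamma$ (the degenerate cases with $\alpha$ or $\beta$ absent are covered by taking $p_1\to p_1$, resp. appealing to \eqref{E:a4}). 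Put $\theta:=\sim\alpha$. Since $\vdash\alpha\to\sim\sim\alpha$ by \eqref{E:a0.1}, item 1 gives $\sim\theta\in\Gamma_1$, so the third hypothesis of item 8 yields $\sim(\phi\boxto\theta)\in\Gamma_0\subseteq\Gamma$. From $\vdash(\alpha\wedge\beta)\to\sim\gamma$ and $\vdash\sim\sim\alpha\to\alpha$ one obtains $\vdash\beta\to(\sim\theta\to\sim\gamma)$, whence $\vdash(\phi\boxto\beta)\to(\phi\boxto(\sim\theta\to\sim\gamma))$ by \eqref{E:Rmbox} and therefore $\phi\boxto(\sim\theta\to\sim\gamma)\in\Gamma$ by item 4. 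Now \eqref{E:T2} supplies $\vdash(\phi\boxto(\sim\theta\to\sim\gamma))\to(\sim(\phi\boxto\theta)\to\sim(\phi\boxto\gamma))$, so two applications of item 4 put $\sim(\phi\boxto\gamma)$ into $\Gamma$, contradicting $\sim(\phi\boxto\gamma)\in\Delta$.

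Item 9 I would treat as the mirror image. Assuming its bi-set inconsistent, Lemma~\ref{L:alt-consistency} (now contracting with the iterated forms of \eqref{E:a0.3}, \eqref{E:Rmbox} and \eqref{E:Rnbox}, using items 1, 3 for the right-hand side, and covering the empty cases with $p_1\to p_1$ and $\sim\sim(p_1\to p_1)$) produces $\delta\in\Gamma_0$, $\sim\epsilon\in\Gamma$ and $\zeta\in\Delta$ with $\vdash\delta\to(\sim(\phi\boxto\epsilon)\to(\phi\boxto\zeta))$. Rewriting $\sim(\phi\boxto\epsilon)$ as $\phi\diamondto\sim\epsilon$ by \eqref{E:T6} and then applying \eqref{E:a3} (with antecedent $\psi:=\sim\epsilon$ and consequent $\chi:=\zeta$) gives $\vdash\delta\to(\phi\boxto(\sim\epsilon\to\zeta))$; hence $\phi\boxto(\sim\epsilon\to\zeta)\in\Gamma_0$ by item 4, so $\sim\epsilon\to\zeta\in\Gamma_1$ by the second hypothesis of item 9, and since $\Gamma_1\subseteq\Gamma$ and $\sim\epsilon\in\Gamma$, item 4 forces $\zeta\in\Gamma$, contradicting $\zeta\in\Delta$.

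The hard part is not any single step but the polarity bookkeeping in items 8 and 9: the four shapes $\psi$, $\sim\psi$, $\phi\boxto\psi$, $\sim(\phi\boxto\psi)$ occurring in the conclusions must be preserved while several instances of them are contracted into one, which forces the use of the De Morgan and double-negation schemes \eqref{E:a0.1}--\eqref{E:a0.3} and of the monotonicity rules \eqref{E:Rmbox}, \eqref{E:Rnbox} (and not merely \eqref{E:RCbox1}, \eqref{E:RCbox2}, which would apply only after contraction); only once everything sits in the right shape does the one essential principle (\eqref{E:T2} for item 8, \eqref{E:a3} together with \eqref{E:T6} for item 9) do the real work. Checking that each intermediate formula lands exactly in $\Gamma$, $\Gamma_0$ or $\Delta$ as claimed, and supplying the (routine) empty-conjunction fall-backs, is where the bulk of the write-up will go.
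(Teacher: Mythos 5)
Your proposal is correct and follows essentially the same route as the paper: items 1--7 by the standard prime-theory arguments, and items 8--9 by finite contraction via Lemma \ref{L:alt-consistency} followed by a short derivation that uses the hypotheses to land a formula of $\Delta$ inside $\Gamma$. The only cosmetic difference is in item 8, where you push the implication under $\boxto$ with \eqref{E:Rmbox} and close with \eqref{E:T2}, while the paper instead feeds the implication into the negated-box formula supplied by the third hypothesis and closes with \eqref{E:T3}; your item 9 matches the paper's use of \eqref{E:a3} (with \eqref{E:T6}-style rewriting and \eqref{E:Rnbox}, \eqref{E:Rmbox} contractions) almost verbatim.
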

\begin{proof}
	The Parts 1--7 are handled as in the case of $\mathsf{N4}$. E.g., for Part 3 observe that, if $\phi\vee\psi \in \Gamma$ and $\phi,\psi\in \Delta$, then we must have $\Gamma\vdash\phi\vee\psi$, thus contradicting the consistency of $(\Gamma, \Delta)$.
	
	As for Part 8, assume its hypothesis and suppose, towards contradiction that $(\Gamma_1 \cup \{\psi\mid\phi\boxto\psi\in \Gamma\}, \{\sim\psi\mid\sim(\phi\boxto\psi)\in \Delta\})$ is inconsistent. Then there must exist some $\psi_1,\ldots,\psi_n \in \Gamma_1$, $\phi\boxto\chi_1,\ldots,\phi\boxto\chi_m\in \Gamma$ and some $\sim(\phi\boxto\theta_1),\ldots,\sim(\phi\boxto\theta_k)\in \Delta$, such that, for $\psi:= \bigwedge^n_{i=1}\psi_i$, $\chi:= \bigwedge^m_{j=1}\chi_j$, and $\theta:= \bigwedge^k_{r=1}\theta_r$ we have $\psi,\chi\vdash\sim\theta$. But then, by Lemma \ref{L:n4}.1, $\psi\vdash\chi\to\sim\theta$, whence, by Part 1, $\chi\to\sim\theta$ (and thus also $\sim\sim(\chi\to\sim\theta)$) must be in $\Gamma_1$. We now reason as follows:
	\begin{align}
		&\sim(\phi\boxto\sim(\chi\to\sim\theta))\in \Gamma_0\label{E:max7}&& \{\sim(\phi\boxto\psi)\mid \sim\psi\in \Gamma_1\}\subseteq \Gamma_0\\
		&\sim(\phi\boxto\sim(\chi\to\sim\theta))\in \Gamma\label{E:max8}&&\text{\eqref{E:max7}, $\Gamma_0\subseteq \Gamma$}\\
	&\Gamma\vdash (\phi\boxto\chi)\to\sim(\phi\boxto\theta)\label{E:max10}&&\text{\eqref{E:max8}, \eqref{E:T3}}\\
	&\Gamma\vdash\phi\boxto\chi\label{E:max11}&&\text{\eqref{E:a1}}\\
	&\Gamma\vdash\sim(\phi\boxto\theta)\label{E:max12}&&\text{\eqref{E:max10}, \eqref{E:max11}}\\
	&\Gamma\vdash\sim(\phi\boxto\theta_1\wedge\ldots\wedge\phi\boxto\theta_m)\label{E:max12a}&&\text{by \eqref{E:max12}, \eqref{E:a1}}\\		&\Gamma\vdash\sim(\phi\boxto\theta_1)\vee\ldots\vee\sim(\phi\boxto\theta_m)\label{E:max13}&&\text{by \eqref{E:max12a}, \eqref{E:a0.2}}
\end{align}
It follows now from \eqref{E:max13}, that $(\Gamma, \Delta)$ is not consistent and thus also not maximal, contrary to our initial assumption. The obtained contradiction shows that the bi-set 

\noindent$(\Gamma_1 \cup \{\psi\mid\phi\boxto\psi\in \Gamma\}, \{\sim\psi\mid\sim(\phi\boxto\psi)\in \Delta\})$ must have been consistent.

For Part 9,  assume its hypothesis and suppose that $(\Gamma_0 \cup \{\sim(\phi\boxto\psi)\mid\sim\psi\in \Gamma\}, \{\phi\boxto\psi\mid\psi\in \Delta\})$ is inconsistent. Then there must exist some $\psi_1,\ldots,\psi_n \in \Gamma_0$, $\sim\chi_1,\ldots,\sim\chi_m\in \Gamma$, and some $\theta_1,\ldots,\theta_k\in \Delta$, such that
$\bigwedge^n_{i=1}\psi_i,\bigwedge^m_{j = 1}\sim(\phi\boxto\chi_j)\vdash\bigvee^k_{r = 1}(\phi\boxto\theta_r)$.
Again, we set $\psi:= \bigwedge^n_{i=1}\psi_i$, $\chi:= \bigwedge^m_{j=1}\sim\chi_j$, and $\theta:= \bigvee^k_{r=1}\theta_r$, and reason as follows:
\begin{align}
	&\Gamma_0\vdash\bigwedge^m_{j = 1}\sim(\phi\boxto\chi_j)\to\bigvee^k_{r = 1}(\phi\boxto\theta_r)\label{E:maxi4}&&\text{Lemma \ref{L:n4}.1}\\
	&\vdash\sim(\phi\boxto\sim\chi)\to\bigwedge^m_{j = 1}\sim(\phi\boxto\chi_j)\label{E:maxi7}&&\text{$\mathsf{N4}$, \eqref{E:Rnbox}}\\
	&\vdash\bigvee^k_{r = 1}(\phi\boxto\theta_j)\to(\phi\boxto \theta)\label{E:maxi9}&&\text{$\mathsf{N4}$, \eqref{E:Rmbox}}\\
	&\Gamma_0\vdash\sim(\phi\boxto\sim\chi)\to(\phi\boxto \theta)\label{E:maxi10}&&\text{\eqref{E:maxi4},\eqref{E:maxi7},\eqref{E:maxi9}}\\
	&\phi\boxto(\chi\to\theta)\in\Gamma_0\label{E:maxi12}&&\text{\eqref{E:maxi10},\eqref{E:a3}, Part 1}\\
	&(\chi\to\theta)\in\Gamma_1\label{E:maxi13}&&\text{\eqref{E:maxi12}, $\{\psi\mid \phi\boxto\psi\in \Gamma_0\}\subseteq \Gamma_1$}
\end{align}
By $\Gamma_1\subseteq \Gamma$, we know then that also $(\chi\to\theta)\in\Gamma$. Now, since clearly $\Gamma\vdash\chi$, it follows that $\Gamma\vdash\theta = \bigvee^k_{r=1}\theta_r$, which clearly contradicts the consistency of $(\Gamma, \Delta)$. The obtained contradiction shows that the bi-set $(\Gamma_0 \cup \{\sim(\phi\boxto\psi)\mid\sim\psi\in \Gamma\}, \{\phi\boxto\psi\mid\psi\in \Delta\})$ must have been consistent.
\end{proof}
We observe, next, that we can use the usual Lindenbaum construction to extend every consistent bi-set to a maximal one:
\begin{lemma}\label{L:lindenbaum}
	Let $(\Gamma, \Delta)\in \mathcal{P}(\mathcal{L}_{\boxto})\times\mathcal{P}(\mathcal{L}_{\boxto})$ be consistent. Then there exists a maximal $(\Xi, \Theta)\in \mathcal{P}(\mathcal{L}_{\boxto})\times\mathcal{P}(\mathcal{L}_{\boxto})$ such that $\Gamma \subseteq \Xi$ and $\Delta \subseteq \Theta$. 
\end{lemma}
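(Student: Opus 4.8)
The plan is to run the standard Lindenbaum construction, using Lemma \ref{L:consistent}.1 to keep consistency at each step and Lemma \ref{L:alt-consistency} to push consistency to the limit. Since $Var$ is countable and $\mathcal{L}_{\boxto}$ is generated from $Var$ by finitely many connectives, $\mathcal{L}_{\boxto}$ is countable; fix an enumeration $\mathcal{L}_{\boxto} = \{\phi_n \mid n \in \omega\}$.

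First I would define by recursion on $n\in\omega$ an $\subseteq$-increasing chain of consistent bi-sets $(\Gamma_n, \Delta_n)$. Set $(\Gamma_0, \Delta_0) := (\Gamma, \Delta)$, which is consistent by hypothesis. Given a consistent $(\Gamma_n, \Delta_n)$, Lemma \ref{L:consistent}.1 guarantees that at least one of $(\Gamma_n \cup \{\phi_n\}, \Delta_n)$ and $(\Gamma_n, \Delta_n \cup \{\phi_n\})$ is consistent; let $(\Gamma_{n+1}, \Delta_{n+1})$ be the first of these two that is consistent. In particular $\phi_n \in \Gamma_{n+1}\cup\Delta_{n+1}$, $(\Gamma_{n+1},\Delta_{n+1})$ is consistent, and $\Gamma_n\subseteq\Gamma_{n+1}$, $\Delta_n\subseteq\Delta_{n+1}$.

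Next I would put $\Xi := \bigcup_{n\in\omega}\Gamma_n$ and $\Theta := \bigcup_{n\in\omega}\Delta_n$. Then $\Gamma = \Gamma_0\subseteq\Xi$ and $\Delta = \Delta_0\subseteq\Theta$, so it only remains to see that $(\Xi,\Theta)$ is maximal. For completeness: any $\phi\in\mathcal{L}_{\boxto}$ equals $\phi_n$ for some $n$, and then $\phi_n\in\Gamma_{n+1}\cup\Delta_{n+1}\subseteq\Xi\cup\Theta$, so $\Xi\cup\Theta = \mathcal{L}_{\boxto}$.

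The one step that is not pure bookkeeping is the consistency of $(\Xi,\Theta)$, and it is exactly here that the finitary characterization of $\vdash$ matters. Suppose towards a contradiction that $\Xi\vdash\Theta$. By Lemma \ref{L:alt-consistency} there are finitely many $\chi_1,\ldots,\chi_k\in\Xi$ and $\zeta_1,\ldots,\zeta_l\in\Theta$ with $\bigwedge_{i=1}^{k}\chi_i\vdash\bigvee_{j=1}^{l}\zeta_j$. Each $\chi_i$ lies in some $\Gamma_{m_i}$ and each $\zeta_j$ in some $\Delta_{m'_j}$; letting $N$ be the largest of these indices and using that the chains are increasing, we get $\chi_1,\ldots,\chi_k\in\Gamma_N$ and $\zeta_1,\ldots,\zeta_l\in\Delta_N$, so by Lemma \ref{L:alt-consistency} again $(\Gamma_N,\Delta_N)$ is inconsistent — contradicting the fact that every $(\Gamma_n,\Delta_n)$ is consistent. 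Hence $(\Xi,\Theta)$ is consistent and therefore maximal, which is what was claimed. I expect no real obstacle beyond remembering to invoke Lemma \ref{L:alt-consistency} for the limit step; the rest is routine.
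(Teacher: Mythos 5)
Your proof is correct and is precisely the ``usual Lindenbaum construction'' that the paper invokes without spelling out: stepwise extension via Lemma \ref{L:consistent}.1 over an enumeration of the countable language, followed by the finitary argument (Lemma \ref{L:alt-consistency}) that pushes consistency to the union. No gap; this matches the intended argument.
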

Next, we define the canonical model $\mathcal{M}_c$ for $\mathbb{N}4\mathbb{CK}$:
\begin{definition}\label{D:canonical-model}
	The structure $\mathcal{M}_c$ is the tuple $(W_c, \leq_c, R_c, V^+_c, V^-_c)$ such that:
	\begin{itemize}
		\item $W_c:=\{(\Gamma, \Delta)\in \mathcal{P}(\mathcal{L})\times\mathcal{P}(\mathcal{L})\mid (\Gamma, \Delta)\text{ is maximal}\}$.
		
		\item $(\Gamma_0,\Delta_0)\leq_c(\Gamma_1,\Delta_1)$ iff $\Gamma_0\subseteq\Gamma_1$ for all $(\Gamma_0,\Delta_0),(\Gamma_1,\Delta_1)\in W_c$.
		
		\item For all $(\Gamma_0,\Delta_0),(\Gamma_1,\Delta_1)\in W_c$ and $X \subseteq W_c$, we have $((\Gamma_0,\Delta_0),(X,Y),(\Gamma_1,\Delta_1)) \in R_c$ iff there exists a $\phi\in\mathcal{L}_{\boxto}$, such that all of the following holds:
		\begin{itemize}
			\item $X = \{(\Gamma,\Delta)\in W_c\mid\phi\in\Gamma\}$.
			
			\item $Y = \{(\Gamma,\Delta)\in W_c\mid\sim\phi\in\Gamma\}$
			
			\item $\{\psi\mid\phi\boxto\psi\in \Gamma_0\}\subseteq \Gamma_1$.
			
			\item $\{\sim(\phi\boxto\psi)\mid\sim\psi\in \Gamma_1\}\subseteq \Gamma_0$.
		\end{itemize}
		
		\item $V^+_c(p):=\{(\Gamma,\Delta)\in W_c\mid p\in\Gamma\}$ for every $p \in Var$.
		
		\item $V^-_c(p):=\{(\Gamma,\Delta)\in W_c\mid \sim p\in\Gamma\}$ for every $p \in Var$.	
	\end{itemize}
\end{definition}
First of all, we observe that the definition of $R_c$ does not depend on the choice of the representative formula $\phi\in\mathcal{L}_{\boxto}$. The following lemma provides the necessary stepping stone:
\begin{lemma}\label{L:representatives}
	Let $\phi,\psi \in \mathcal{L}$ be such that both $\{(\Gamma,\Delta)\in W_c\mid\phi\in\Gamma\} = \{(\Gamma,\Delta)\in W_c\mid\psi\in\Gamma\}$ and $\{(\Gamma,\Delta)\in W_c\mid\sim\phi\in\Gamma\} = \{(\Gamma,\Delta)\in W_c\mid\sim\psi\in\Gamma\}$. Then, for every $(\Gamma',\Delta')\in W_c$ and every $\chi\in\mathcal{L}_{\boxto}$ we have:
	\begin{enumerate}
		\item $\phi\boxto\chi\in\Gamma'$ iff $\psi\boxto\chi\in\Gamma'$.
		
		\item $\sim(\phi\boxto\chi)\in\Gamma'$ iff $\sim(\psi\boxto\chi)\in\Gamma'$.
	\end{enumerate}
\end{lemma}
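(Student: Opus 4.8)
Looking at Lemma \ref{L:representatives}, I need to prove that if two formulas $\phi, \psi$ define the same verification-set and the same falsification-set among maximal bi-sets (worlds of $\mathcal{M}_c$), then $\phi$ and $\psi$ are interchangeable as antecedents of $\boxto$ (both positively and in negated contexts).

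The key insight: the hypothesis says that in the canonical model, $\phi$ and $\psi$ have the same truth-set and the same falsity-set. I want to leverage the inference rules \eqref{E:RAbox} (substitution of strong equivalents in antecedents) and the characterization of strong equivalence. The strategy is to first show $\phi \Leftrightarrow \psi \in \mathsf{N4CK}$, i.e., $\vdash \phi \Leftrightarrow \psi$... but wait, that can't follow from a statement about the canonical model alone without completeness, which we haven't proved yet. So I need something weaker. Let me reconsider.

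The plan is as follows. First I would unpack what the hypothesis gives us: for every maximal $(\Gamma,\Delta)$, $\phi \in \Gamma \iff \psi \in \Gamma$ and $\sim\phi \in \Gamma \iff \sim\psi \in \Gamma$. From the Lindenbaum Lemma \ref{L:lindenbaum} and Lemma \ref{L:maximal}.1, a formula $\theta$ belongs to every maximal $\Gamma$ with $\phi \in \Gamma$ precisely when $\phi \vdash \theta$ (consider the bi-set $(\{\phi\}, \{\theta\})$: if consistent, extend to a maximal one witnessing $\phi \in \Gamma$, $\theta \notin \Gamma$). Hence the hypothesis is equivalent to: $\phi \vdash \psi$, $\psi \vdash \phi$, $\sim\phi \vdash \sim\psi$, and $\sim\psi \vdash \sim\phi$ — which, via the Deduction Theorem (Lemma \ref{L:n4}.1) and the definitions of $\Rightarrow$ and $\Leftrightarrow$, amounts to $\vdash \phi \Leftrightarrow \psi$.

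Then I would simply apply the derived rules. For Part 1, from $\vdash \phi \Leftrightarrow \psi$ and \eqref{E:RAbox} we get $\vdash (\phi\boxto\chi)\Leftrightarrow(\psi\boxto\chi)$; by Proposition \ref{P:n4-basics}.3 (or directly, since $\Leftrightarrow$ entails $\leftrightarrow$) this yields $\vdash (\phi\boxto\chi)\leftrightarrow(\psi\boxto\chi)$, so by Lemma \ref{L:maximal}.1 and \ref{L:maximal}.4 (plus Part 2 for the conjunction), $\phi\boxto\chi \in \Gamma' \iff \psi\boxto\chi \in \Gamma'$. For Part 2, the same $\vdash (\phi\boxto\chi)\Leftrightarrow(\psi\boxto\chi)$ unfolds (by definition of $\Rightarrow$) to contain the conjuncts $\sim(\psi\boxto\chi)\to\sim(\phi\boxto\chi)$ and $\sim(\phi\boxto\chi)\to\sim(\psi\boxto\chi)$, which are therefore theorems; then Lemma \ref{L:maximal}.1,\ref{L:maximal}.4 give $\sim(\phi\boxto\chi)\in\Gamma' \iff \sim(\psi\boxto\chi)\in\Gamma'$.

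The main obstacle is the first paragraph — establishing that the semantic-looking hypothesis about $W_c$ is equivalent to the syntactic statement $\vdash \phi\Leftrightarrow\psi$. This requires carefully setting up the four "consistency of $(\{\alpha\},\{\beta\})$" arguments and invoking Lindenbaum together with Lemma \ref{L:maximal}.1; one must also handle the interplay between $\sim$ and the $\Rightarrow$-conjuncts, using $\sim\sim\phi \leftrightarrow \phi$ from \eqref{E:a0.1} to pass between "$\sim\phi \vdash \sim\psi$" and membership facts about $\sim$-prefixed formulas in maximal sets. Once this translation is in place, the rest is a routine application of \eqref{E:RAbox} and the maximal-set closure properties from Lemma \ref{L:maximal}.
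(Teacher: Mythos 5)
Your proposal is correct and takes essentially the same route as the paper: both reduce the canonical-model hypothesis to the syntactic fact $\vdash\phi\Leftrightarrow\psi$ via the Lindenbaum lemma (the paper by contraposition, you via the equivalent principle that $\theta$ lies in every maximal bi-set containing $\alpha$ iff $\alpha\vdash\theta$), and then conclude by applying \eqref{E:RAbox} and the closure properties of maximal bi-sets from Lemma \ref{L:maximal}. The only inessential difference is your closing remark about \eqref{E:a0.1}, which is not actually needed since the hypothesis already speaks directly of the $\sim$-prefixed formulas $\sim\phi$ and $\sim\psi$.
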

\begin{proof}
	Assume the hypothesis of the Lemma. We will show that in this case we must have $\vdash\phi\Leftrightarrow\psi$. Suppose not, and assume, for instance, that $\not\vdash\phi\to\psi$. Then $(\{\phi\},\{\psi\})$ must be consistent and thus extendable to some maximal $(\Gamma_0,\Delta_0)\supseteq (\{\phi\},\{\psi\})$. But then clearly $
	(\Gamma_0,\Delta_0)\in \{(\Gamma,\Delta)\in W_c\mid\phi\in\Gamma\} \setminus \{(\Gamma,\Delta)\in W_c\mid\psi\in\Gamma\}$, 
	in contradiction with our initial assumptions. On the other hand, if we have, e.g. $\not\vdash\sim\phi\to\sim\psi$, then $(\{\sim\phi\},\{\sim\psi\})$ must be consistent and thus extendable to some maximal $(\Gamma_1,\Delta_1)\supseteq (\{\sim\phi\},\{\sim\psi\})$. But then clearly
	$$
	(\Gamma_1,\Delta_1)\in \{(\Gamma,\Delta)\in W_c\mid\sim\phi\in\Gamma\} \setminus \{(\Gamma,\Delta)\in W_c\mid\sim\psi\in\Gamma\},
	$$ 
	again contradicting our initial assumptions. The reasoning in other cases is parallel to the examples considered above.
	
	Thus we see that we must have $\vdash\phi\Leftrightarrow\psi$. An application of \eqref{E:RAbox} then yields that also $\vdash(\phi\boxto\chi)\Leftrightarrow(\psi\boxto\chi)$
	for every $\chi\in\mathcal{L}_{\boxto}$, whence our Lemma clearly follows.  
\end{proof}
We have to make sure that we have indeed just defined a model:
\begin{lemma}\label{L:canonical-model}
$\mathcal{M}_c \in CNel$.	
\end{lemma}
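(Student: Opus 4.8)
The plan is to check, in turn, the clauses of Definition \ref{D:model}: that $(W_c,\leq_c,V^+_c,V^-_c)$ is a Nelsonian model, that $R_c$ is a relation of the prescribed type, and that $R_c$ satisfies \eqref{Cond:1} and \eqref{Cond:2} for every $X,Y\subseteq W_c$. The first two are routine. To see $W_c\neq\emptyset$, note that $(\emptyset,\emptyset)$ is consistent --- since, as observed right after the definition of $\mathbb{S}[Lang]$, every pair belonging to a system of the form $\mathbb{S}[Lang]$ has a nonempty right component --- and apply Lemma \ref{L:lindenbaum} to extend it to a maximal bi-set. Reflexivity and transitivity of $\leq_c$ follow from those of $\subseteq$, and \eqref{Cond:mon} holds for $V^+_c$ and $V^-_c$ because $(\Gamma_0,\Delta_0)\leq_c(\Gamma_1,\Delta_1)$ says $\Gamma_0\subseteq\Gamma_1$ while membership of a world in $V^\star_c(p)$ is decided by whether $p$, resp.\ $\sim p$, lies in its left component. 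That $R_c\subseteq W_c\times(\mathcal{P}(W_c)\times\mathcal{P}(W_c))\times W_c$ is immediate from Definition \ref{D:canonical-model}.

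The substance of the lemma --- and the only part I expect to require care --- is the verification of \eqref{Cond:1} and \eqref{Cond:2}, and here the relevant consistency facts have already been packaged as Lemma \ref{L:maximal}.8 and Lemma \ref{L:maximal}.9. For \eqref{Cond:1}, assume $w\leq_c w'$ and $w\mathrel{(R_c)_{(X,Y)}}v$. By Definition \ref{D:canonical-model} the edge $w\mathrel{(R_c)_{(X,Y)}}v$ comes with a witnessing $\phi\in\mathcal{L}_{\boxto}$ for which $X=\{(\Gamma,\Delta)\in W_c\mid\phi\in\Gamma\}$, $Y=\{(\Gamma,\Delta)\in W_c\mid\sim\phi\in\Gamma\}$, and the two box-transfer inclusions between $w$ and $v$ hold. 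Identifying $w,v,w'$ with the maximal bi-sets $(\Gamma_0,\Delta_0),(\Gamma_1,\Delta_1),(\Gamma,\Delta)$ of Lemma \ref{L:maximal}.8 --- whose three hypotheses then read precisely as ``$w\leq_c w'$'' and ``$w\mathrel{(R_c)_{(X,Y)}}v$ with witness $\phi$'' --- the lemma tells us that $(\Gamma_1\cup\{\psi\mid\phi\boxto\psi\in\Gamma\},\{\sim\psi\mid\sim(\phi\boxto\psi)\in\Delta\})$ is consistent. Extending this bi-set to a maximal $v'=(\Gamma^*_1,\Delta^*_1)$ by Lemma \ref{L:lindenbaum} yields at once $v\leq_c v'$ (since $\Gamma_1\subseteq\Gamma^*_1$) and, taking the same $\phi$ as witness, $w'\mathrel{(R_c)_{(X,Y)}}v'$: the first box-transfer inclusion for the pair $w',v'$ is supplied by $\{\psi\mid\phi\boxto\psi\in\Gamma\}\subseteq\Gamma^*_1$, and the second by $\{\sim\psi\mid\sim(\phi\boxto\psi)\in\Delta\}\subseteq\Delta^*_1$ together with the completeness and consistency of $w'$ and $v'$. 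This is exactly the configuration the diagram for \eqref{Cond:1} asks us to complete.

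Condition \eqref{Cond:2} is obtained symmetrically from Lemma \ref{L:maximal}.9: given $w\mathrel{(R_c)_{(X,Y)}}v$ with witness $\phi$ and $v\leq_c v'$, and now reading $w,v,v'$ as $(\Gamma_0,\Delta_0),(\Gamma_1,\Delta_1),(\Gamma,\Delta)$, that lemma gives the consistency of $(\Gamma_0\cup\{\sim(\phi\boxto\psi)\mid\sim\psi\in\Gamma\},\{\phi\boxto\psi\mid\psi\in\Delta\})$, a maximal extension of which is a world $w'$ with $w\leq_c w'$ and $w'\mathrel{(R_c)_{(X,Y)}}v'$. The only genuinely fiddly points in both arguments are bookkeeping: aligning the directions of $\leq_c$ and of $R_c$ with the two diagrams, and keeping track of which of the two clauses ``$\{\psi\mid\phi\boxto\psi\in\cdot\}\subseteq\cdot$'' and ``$\{\sim(\phi\boxto\psi)\mid\sim\psi\in\cdot\}\subseteq\cdot$'' of Definition \ref{D:canonical-model} is being re-established by which half of the Lindenbaum extension. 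Finally, if $(X,Y)$ is not of the form generated by some formula, then $(R_c)_{(X,Y)}=\emptyset$, so \eqref{Cond:1} and \eqref{Cond:2} hold vacuously and need no separate treatment. No argument beyond Lemmas \ref{L:maximal}, \ref{L:lindenbaum} and the definitions is required.
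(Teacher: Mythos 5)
Your proof is correct and takes essentially the same route as the paper: the frame conditions \eqref{Cond:1} and \eqref{Cond:2} are reduced, via the witnessing formula $\phi$, to Lemma \ref{L:maximal}.8 and Lemma \ref{L:maximal}.9 plus a Lindenbaum extension, with the non-trivial transfer clause for the new edge recovered from the right half of the extended bi-set together with consistency and completeness, exactly as in the paper's argument. The only (harmless) deviation is the non-emptiness of $W_c$, which you get by extending $(\emptyset,\emptyset)$ instead of the paper's $(\emptyset,\{p\})$; both work.
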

\begin{proof}
	We show, first, that $W_c \neq \emptyset$. Indeed, choose any $p \in Var$ and consider the bi-set $(\emptyset,\{p\})$. Clearly, $p\in\mathcal{L}\setminus\mathsf{N4}$, whence Lemma \ref{L:n4} implies that $(\emptyset,\{p\})$ is consistent. Therefore, by Lemma \ref{L:lindenbaum}, there must exist a maximal $(\Gamma, \Delta)\supseteq(\emptyset,\{p\})$; and we will have, by Definition \ref{D:canonical-model}, that $(\Gamma, \Delta)\in W_c$.
	
	It is also clear from Definition  \ref{D:canonical-model} and Lemma \ref{L:representatives} that $\leq_c$ is a pre-order, and that $R_c\subseteq W_c\times(\mathcal{P}(W_c)\times\mathcal{P}(W_c))\times W_c$ is well-defined. So it only remains to check the satisfaction of conditions \eqref{Cond:1} and \eqref{Cond:2} from Definition \ref{D:model}.
	
	As for \eqref{Cond:1}, assume that $(\Gamma,\Delta)$,  $(\Gamma_0,\Delta_0)$, and  $(\Gamma_1,\Delta_1)$ are maximal, and that $X, Y\subseteq W_c$ are such that we have $(\Gamma,\Delta) \mathrel{\geq_c}(\Gamma_0,\Delta_0)\mathrel{(R_c)_{(X,Y)}}(\Gamma_1,\Delta_1)$. Then, in particular, $\Gamma\supseteq\Gamma_0$. Moreover, we can choose a $\phi \in\mathcal{L}_{\boxto}$ such that we have:
	\begin{align}
		&X = \{(\Xi,\Theta)\in W_c\mid\phi\in\Xi\}\label{E:mod1}\\
		&Y = \{(\Xi,\Theta)\in W_c\mid\sim\phi\in\Xi\}\label{E:mod1a}\\
		&\{\psi\mid\phi\boxto\psi\in \Gamma_0\}\subseteq \Gamma_1\label{E:mod2}\\
		&\{\sim(\phi\boxto\psi)\mid\sim\psi\in \Gamma_1\}\subseteq \Gamma_0\label{E:mod3}
	\end{align}
By Lemma \ref{L:maximal}.8, the bi-set $(\Gamma_1 \cup \{\psi\mid\phi\boxto\psi\in \Gamma\}, \{\sim\psi\mid\sim(\phi\boxto\psi)\in \Delta\})$ must then be consistent, so that, by Lemma \ref{L:lindenbaum}, this bi-set must be extendable to some maximal bi-set $(\Gamma',\Delta')$. We will have then $\Gamma'\supseteq \Gamma_1$ whence clearly $(\Gamma',\Delta')\mathrel{\geq_c}(\Gamma_1,\Delta_1)$.

Next, we get $\{\psi\mid\phi\boxto\psi\in \Gamma\}\subseteq \Gamma'$ trivially by the choice of $(\Gamma',\Delta')$. Moreover, if $\sim\psi \in \Gamma'$, then $\sim\psi\notin\Delta'$ by the consistency of $(\Gamma',\Delta')$. But this means that we cannot have $\sim(\phi\boxto\psi)\in\Delta$, so $\sim(\phi\boxto\psi)\in \Gamma$ by the completeness of $(\Gamma,\Delta)$. Thus we have shown that also $\{\sim(\phi\boxto\psi)\mid\sim\psi\in \Gamma'\}\subseteq \Gamma$. Summing this up with \eqref{E:mod1} and \eqref{E:mod1a}, we obtain that $(\Gamma,\Delta)\mathrel{(R_c)_{(X,Y)}}(\Gamma',\Delta')$. Thus we get that $
(\Gamma,\Delta)\mathrel{(R_c)_{(X,Y)}}(\Gamma',\Delta')\mathrel{\geq_c}(\Gamma_1,\Delta_1)$, and condition \eqref{Cond:1} is shown to be satisfied.

As for \eqref{Cond:2}, assume that $(\Gamma,\Delta)$,  $(\Gamma_0,\Delta_0)$, and  $(\Gamma_1,\Delta_1)$ are maximal, and that $X,Y\subseteq W_c$ are such that we have $(\Gamma_0,\Delta_0)\mathrel{(R_c)_{(X,Y)}} (\Gamma_1,\Delta_1)\mathrel{\leq_c}(\Gamma,\Delta)$. Then $\Gamma\supseteq\Gamma_1$. Moreover, we can choose a $\phi \in\mathcal{L}_{\boxto}$ such that all of \eqref{E:mod1}--\eqref{E:mod3} hold.

By Lemma \ref{L:maximal}.9, the bi-set $(\Gamma_0 \cup \{\sim(\phi\boxto\psi)\mid\sim\psi\in \Gamma\}, \{\phi\boxto\psi\mid\psi\in \Delta\})$ must then be consistent, so that, by Lemma \ref{L:lindenbaum}, this bi-set must be extendable to some maximal bi-set $(\Gamma',\Delta')$. Clearly, $\Gamma'\supseteq \Gamma_0$ whence also $(\Gamma',\Delta')\mathrel{_c\geq}(\Gamma_0,\Delta_0)$.

Next, assume that $\psi\in \mathcal{L}_{\boxto}$ is such that $\phi\boxto\psi\in \Gamma'$. If $\psi\notin\Gamma$, then $\psi \in\Delta$ by the completeness of $(\Gamma,\Delta)$, whence $\phi\boxto\psi\in \Delta'$. But the latter contradicts the consistency of $(\Gamma',\Delta')$. Therefore $\psi\in\Gamma$. Since the choice of $\psi$ was arbitrary, we have shown that $\{\psi\mid\phi\boxto\psi\in \Gamma'\}\subseteq \Gamma$. Moreover, if $\sim\psi\in\Gamma$, then $\sim(\phi\boxto\psi) \in \Gamma'$ so that $\{\sim(\phi\boxto\psi)\mid\sim\psi\in \Gamma\}\subseteq \Gamma'$ also holds. Summing this up with \eqref{E:mod1} and \eqref{E:mod1a}, we obtain that $(\Gamma',\Delta')\mathrel{(R_c)_{(X,Y)}}(\Gamma,\Delta)$. Thus we get that $(\Gamma_0,\Delta_0)\mathrel{\leq_c}(\Gamma',\Delta')\mathrel{(R_c)_{(X,Y)}}(\Gamma,\Delta)$, and condition \eqref{Cond:2} is shown to be satisfied.
\end{proof}
The truth lemma for this model then looks as follows:
\begin{lemma}\label{L:truth}
	For every $\phi\in\mathcal{L}_{\boxto}$ and for every $(\Gamma,\Delta)\in W_c$, the following statements hold:
	\begin{enumerate}
		\item $
		\mathcal{M}_c,(\Gamma,\Delta)\models^+\phi$ iff $\phi \in \Gamma$.
		
		\item $
		\mathcal{M}_c,(\Gamma,\Delta)\models^-\phi$ iff $\sim\phi \in \Gamma$.
	\end{enumerate} 
\end{lemma}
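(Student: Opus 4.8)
The plan is to prove both biconditionals by a single simultaneous induction on the construction of $\phi\in\mathcal{L}_{\boxto}$; the two parts must be carried along together since the falsification clauses feed back into verification through $\sim$ and $\boxto$.

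The propositional cases are routine. For $\phi\in Var$ both parts hold by the definitions of $V^+_c$ and $V^-_c$. For $\phi=\psi\wedge\chi$, $\phi=\psi\vee\chi$ and $\phi=\sim\psi$ one combines the induction hypothesis with the relevant clauses of Lemma~\ref{L:maximal} (parts 2, 3, 5, 6), where in the negation case one also uses \eqref{E:a0.1}, which via Lemma~\ref{L:maximal}.1 and~\ref{L:maximal}.4 yields $\sim\sim\psi\in\Gamma$ iff $\psi\in\Gamma$ (note that for $\sim\psi$ the strong-negation clauses reduce part~1 to part~2 for $\psi$ and part~2 to part~1 for $\psi$). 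For $\phi=\psi\to\chi$, part~2 is immediate from Lemma~\ref{L:maximal}.7 and the IH; for part~1 the right-to-left direction uses Lemma~\ref{L:maximal}.1 and~\ref{L:maximal}.4 (modus ponens is available inside any maximal bi-set), while the left-to-right direction goes through the contrapositive: if $\psi\to\chi\notin\Gamma$ then $\psi\to\chi\in\Delta$ by completeness, Lemma~\ref{L:consistent}.2 makes $(\Gamma\cup\{\psi\},\{\chi\})$ consistent, and a maximal extension supplied by Lemma~\ref{L:lindenbaum} is a $\leq_c$-successor of $(\Gamma,\Delta)$ verifying $\psi$ but not $\chi$ by the IH (since $\chi$ falls into the right-hand component).

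The substantive case is $\phi=\psi\boxto\chi$. The first move is to apply the IH to the proper subformula $\psi$: this yields that $\|\psi\|_{\mathcal{M}_c}$ equals $(\{(\Xi,\Theta)\in W_c\mid\psi\in\Xi\},\{(\Xi,\Theta)\in W_c\mid\sim\psi\in\Xi\})$, i.e. the bi-set of which $\psi$ itself is an admissible representative in Definition~\ref{D:canonical-model}; hence, by Lemma~\ref{L:representatives}, for maximal $(\Gamma_1,\Delta_1),(\Gamma_2,\Delta_2)$ the relation $(R_c)_{\|\psi\|_{\mathcal{M}_c}}((\Gamma_1,\Delta_1),(\Gamma_2,\Delta_2))$ holds exactly when both $\{\theta\mid\psi\boxto\theta\in\Gamma_1\}\subseteq\Gamma_2$ and $\{\sim(\psi\boxto\theta)\mid\sim\theta\in\Gamma_2\}\subseteq\Gamma_1$. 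Three of the four directions are then easy. Part~1, right-to-left: if $\psi\boxto\chi\in\Gamma$ and $(\Gamma_1,\Delta_1)\geq_c(\Gamma,\Delta)$ has a $\|\psi\|_{\mathcal{M}_c}$-successor $(\Gamma_2,\Delta_2)$, then $\psi\boxto\chi\in\Gamma_1$, so $\chi\in\Gamma_2$ by the first inclusion and $\mathcal{M}_c,(\Gamma_2,\Delta_2)\models^+\chi$ by the IH, with \eqref{E:a1} used to move between $\psi\boxto\theta_1\wedge\ldots\wedge\psi\boxto\theta_n$ and $\psi\boxto(\theta_1\wedge\ldots\wedge\theta_n)$ wherever finite conjunctions arise. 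Part~2, right-to-left: if $\sim(\psi\boxto\chi)\in\Gamma$ then Lemma~\ref{L:consistent}.4 makes $(\{\sim\chi\}\cup\{\theta\mid\psi\boxto\theta\in\Gamma\},\{\sim\theta\mid\sim(\psi\boxto\theta)\in\Delta\})$ consistent, and its maximal extension $(\Gamma_2,\Delta_2)$ is a $\|\psi\|_{\mathcal{M}_c}$-successor of $(\Gamma,\Delta)$ (the backward inclusion following from completeness of $(\Gamma,\Delta)$) with $\sim\chi\in\Gamma_2$, hence $\mathcal{M}_c,(\Gamma_2,\Delta_2)\models^-\chi$ by the IH. Part~2, left-to-right: if some $\|\psi\|_{\mathcal{M}_c}$-successor $(\Gamma_2,\Delta_2)$ of $(\Gamma,\Delta)$ has $\sim\chi\in\Gamma_2$, the backward inclusion immediately gives $\sim(\psi\boxto\chi)\in\Gamma$.

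The main obstacle is part~1, left-to-right: from $\psi\boxto\chi\notin\Gamma$ one must exhibit $(\Gamma_1,\Delta_1)\geq_c(\Gamma,\Delta)$ and a $\|\psi\|_{\mathcal{M}_c}$-successor $(\Gamma_2,\Delta_2)$ of $(\Gamma_1,\Delta_1)$ with $\chi\notin\Gamma_2$, so that $\mathcal{M}_c,(\Gamma_2,\Delta_2)\not\models^+\chi$. Since $\psi\boxto\chi\in\Delta$, Lemma~\ref{L:consistent}.3 shows that $(\{\theta\mid\psi\boxto\theta\in\Gamma\},\{\chi\})$ is consistent, which after Lemma~\ref{L:lindenbaum} gives a candidate $(\Gamma_2,\Delta_2)$ for which the forward inclusion is automatic; the backward inclusion $\{\sim(\psi\boxto\theta)\mid\sim\theta\in\Gamma_2\}\subseteq\Gamma_1$ is the delicate point, and is precisely why one cannot in general take $\Gamma_1=\Gamma$ — the universal quantifier over $v\geq w$ in the verification clause for $\boxto$ forces the witnessing successor to live above a possibly strictly larger world. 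So $(\Gamma_2,\Delta_2)$ and the world $(\Gamma_1,\Delta_1)$ must be chosen in coordination, and the consistency statements of Lemma~\ref{L:maximal}.8 and~\ref{L:maximal}.9 (which rest on \eqref{E:T2} and \eqref{E:T3}, and are the same facts that underwrote conditions \eqref{Cond:1}--\eqref{Cond:2} for $\mathcal{M}_c$ in Lemma~\ref{L:canonical-model}) are exactly what is needed to carry out that coordinated choice while keeping $\chi$ outside $\Gamma_2$. Once such a pair is obtained the truth lemma is complete, and strong completeness of $\mathbb{N}4\mathbb{CK}$ with respect to $\mathsf{N4CK}$ then follows in the usual way.
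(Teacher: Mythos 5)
Your propositional cases and three of the four directions of the $\boxto$ case track the paper's argument, including the convenient reformulation that, once the IH has been applied to $\psi$, Lemma \ref{L:representatives} lets one read $(R_c)_{\|\psi\|_{\mathcal{M}_c}}$ directly through $\psi$ itself. The gap is exactly at the point you yourself single out as the main obstacle, Part 1 left-to-right, and your proposed way of closing it does not work as stated. Lemma \ref{L:maximal}.8 and \ref{L:maximal}.9 are not applicable in this situation: their hypotheses presuppose an already existing $R_c$-configuration of three maximal bi-sets (with $\{\theta\mid\psi\boxto\theta\in \Gamma_0\}\subseteq \Gamma_1$ and $\{\sim(\psi\boxto\theta)\mid\sim\theta\in \Gamma_1\}\subseteq \Gamma_0$ already given), whereas here all you have is $(\Gamma,\Delta)$ with $\psi\boxto\chi\in\Delta$ and the freshly built maximal $(\Gamma',\Delta')\supseteq(\{\theta\mid\psi\boxto\theta\in\Gamma\},\{\chi\})$. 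The only way to instantiate \ref{L:maximal}.9 to the conclusion you need would be to take the lemma's ``$\Gamma_1$'' to be $\Gamma'$, which forces you to assume the backward inclusion $\{\sim(\psi\boxto\theta)\mid\sim\theta\in\Gamma'\}\subseteq\Gamma$ --- precisely the condition that may fail and that makes the passage to a strictly larger world necessary. (Incidentally, \ref{L:maximal}.8 rests on \eqref{E:T3} and \ref{L:maximal}.9 on \eqref{E:a3}; it is Lemma \ref{L:consistent}.4 that rests on \eqref{E:T2}.)

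What is actually needed, and what the paper supplies, is a separate consistency argument tailored to this configuration: having fixed $(\Gamma',\Delta')$, one shows that the bi-set $(\Gamma\cup\{\sim(\psi\boxto\theta)\mid\sim\theta\in \Gamma'\},\ \{\psi\boxto\xi\mid\xi\in\Delta'\})$ is consistent. Supposing otherwise, Lemma \ref{L:n4}.1 together with \eqref{E:Rnbox} and \eqref{E:Rmbox} yields $\gamma\vdash(\psi\diamondto\tau)\to(\psi\boxto\xi)$ for suitable finite conjunctions $\gamma$ (of members of $\Gamma$) and $\tau$ (of formulas $\sim\tau_j$ with $\sim\tau_j\in\Gamma'$) and a finite disjunction $\xi$ of members of $\Delta'$; then axiom \eqref{E:a3} and Lemma \ref{L:maximal}.1 put $\psi\boxto(\tau\to\xi)$ into $\Gamma$, so $\tau\to\xi\in\Gamma'$ by the choice of $\Gamma'$, and since $\Gamma'\vdash\tau$ while every disjunct of $\xi$ lies in $\Delta'$, this contradicts the consistency of $(\Gamma',\Delta')$. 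Extending the displayed bi-set to a maximal $(\Gamma_1,\Delta_1)$ then gives the intermediate world: $\Gamma\subseteq\Gamma_1$, the forward and backward inclusions hold between $\Gamma_1$ and $\Gamma'$, hence $(\Gamma,\Delta)\leq_c(\Gamma_1,\Delta_1)\mathrel{(R_c)_{\|\psi\|_{\mathcal{M}_c}}}(\Gamma',\Delta')$ with $\chi\notin\Gamma'$. This use of \eqref{E:a3} is the essential ingredient your sketch omits; without it (or an equivalent derivation) the truth lemma is not established.
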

\begin{proof}
	We prove both parts by simultaneous induction on the construction of $\phi$.
	
\textit{Basis}. If $\phi = p \in Var$, then the lemma holds by the definition of $\mathcal{M}_c$.

\textit{Induction step}. The cases associated with $\wedge$, $\vee$, $\sim$, and $\to$ are solved as in the case of $\mathsf{N4}$. We treat the case when $\phi = \psi \boxto \chi$ in some detail:

\textit{Part 1}. ($\Leftarrow$)  Let $(\Gamma,\Delta)\in W_c$ be such that $\phi = \psi \boxto \chi \in \Gamma$, and let $(\Gamma_0,\Delta_0), (\Gamma_1,\Delta_1)\in W_c$ be such that $(\Gamma,\Delta)\mathrel{\leq_c}(\Gamma_0,\Delta_0)\mathrel{(R_c)_{\|\psi\|_{\mathcal{M}_c}}}(\Gamma_1,\Delta_1)$. Then $\Gamma \subseteq \Gamma_0$, so that $\psi \boxto \chi \in \Gamma_0$. On the other hand, there must exist a $\theta\in \mathcal{L}_{\boxto}$ such that all of the following holds:
\begin{align}
	&\|\psi\|_{\mathcal{M}_c} =(\{(\Xi,\Theta)\in W_c\mid\theta\in\Xi\}, \{(\Xi,\Theta)\in W_c\mid\sim\theta\in\Xi\})\label{E:choice1}\\
	&\{\xi\mid\theta\boxto\xi\in \Gamma_0\}\subseteq \Gamma_1\label{E:choice2}\\
	&\{\sim(\theta\boxto\xi)\mid\sim\xi\in \Gamma_1\}\subseteq \Gamma_0\label{E:choice3}
\end{align}
By IH, we know that also $\|\psi\|_{\mathcal{M}_c} = (\{(\Xi,\Theta)\in W_c\mid\psi\in\Xi\}, \{(\Xi,\Theta)\in W_c\mid\sim\psi\in\Xi\})$. We thus get that:
\begin{align}
	(\{(\Xi,\Theta)\in W_c\mid\psi\in\Xi\}, &\{(\Xi,\Theta)\in W_c\mid\sim\psi\in\Xi\}) =\notag\\
	&= (\{(\Xi,\Theta)\in W_c\mid\theta\in\Xi\}, \{(\Xi,\Theta)\in W_c\mid\sim\theta\in\Xi\})\label{E:choice4}
\end{align}
Since $\psi \boxto \chi \in \Gamma_0$, we know, by Lemma \ref{L:representatives} and \eqref{E:choice4}, that also $\theta\boxto\chi \in \Gamma_0$. It follows by \eqref{E:choice2}, that $\chi\in \Gamma_1$. Next, IH implies that  $\mathcal{M}_c,(\Gamma_1,\Delta_1)\models^+\chi$. Since the choice of  $(\Gamma_0,\Delta_0), (\Gamma_1,\Delta_1)\in W_c$ under the condition that $(\Gamma,\Delta)\mathrel{\leq_c}(\Gamma_0,\Delta_0)\mathrel{(R_c)_{\|\psi\|_{\mathcal{M}_c}}}(\Gamma_1,\Delta_1)$ was made arbitrarily, it follows that we must have $\mathcal{M}_c,(\Gamma,\Delta)\models^+ \psi \boxto \chi = \phi$.

($\Rightarrow$)  Let $(\Gamma,\Delta)\in W_c$ be such that $\phi \notin \Gamma$. Therefore, $\psi \boxto \chi \in \Delta$ by completeness of $(\Gamma, \Delta)$, and $(\{\theta\mid\psi\boxto\theta\in\Gamma\},\{\chi\})$ must be consistent by Lemma \ref{L:consistent}.3. By Lemma \ref{L:lindenbaum}, we can extend it to a maximal $(\Gamma',\Delta')\supseteq (\{\theta\mid\psi\boxto\theta\in\Gamma\},\{\chi\})$. Now, set $(\Gamma_0,\Delta_0):= (\Gamma\cup\{\sim(\psi\boxto\theta)\mid\sim\theta\in \Gamma'\}, \{\psi\boxto\xi\mid\xi\in\Delta'\})$. We claim that $(\Gamma_0,\Delta_0)$ is consistent. Otherwise, we can choose $\gamma_1,\ldots,\gamma_n\in\Gamma$, $\sim\tau_1,\ldots,\sim\tau_m\in\Gamma'$ and $\xi_1,\ldots,\xi_k\in\Delta'$ such that $\bigwedge^n_{i = 1}\gamma_i,\bigwedge^m_{j = 1}\sim(\psi\boxto\tau_j)\vdash\bigvee^k_{r = 1}(\psi\boxto\xi_r)$. But then, for $\gamma:= \bigwedge^n_{i = 1}\gamma_i$, $\tau:= \bigwedge^m_{j = 1}\sim\tau_j$, and $\xi:= \bigvee^k_{r = 1}\xi_r$, we have:
\begin{align}
	&\gamma\vdash \bigwedge^m_{j = 1}\sim(\psi\boxto\tau_j)\to\bigvee^k_{r = 1}(\psi\boxto\xi_r)\label{E:can5} &&\text{Lemma \ref{L:n4}.1}\\
	&\vdash\sim(\psi\boxto\sim\tau)\to\bigwedge^m_{j = 1}\sim(\psi\boxto\tau_j)\label{E:can7} &&\text{$\mathsf{N4}$, \eqref{E:Rnbox}}\\
	&\vdash\bigvee^k_{r = 1}(\psi\boxto\xi_r)\to(\psi\boxto\xi)\label{E:can9} &&\text{$\mathsf{N4}$, \eqref{E:Rmbox}}\\
	&\gamma\vdash(\psi\diamondto\tau)\to(\psi\boxto\xi)\label{E:can10} &&\text{\eqref{E:can5}, \eqref{E:can7}, \eqref{E:can9}}\\
	&(\psi\boxto(\tau\to\xi))\in \Gamma\label{E:can12} &&\text{\eqref{E:can10}, \eqref{E:a4}, Lemma \ref{L:maximal}.1}
\end{align}
By \eqref{E:can12} and the choice of $(\Gamma',\Delta')$, we know that $(\tau\to\xi)\in \Gamma'$, therefore, $(\Gamma',\Delta')$ must be inconsistent, which contradicts its choice and shows that $(\Gamma_0,\Delta_0)$ must have been consistent. Therefore, $(\Gamma_0,\Delta_0)$ is extendable to a maximal bi-set $(\Gamma_1,\Delta_1)\supseteq(\Gamma_0,\Delta_0)$. 

We now claim that we have both $(\Gamma,\Delta)\mathrel{\leq_c}(\Gamma_1,\Delta_1)$ and 
$$
((\Gamma_1,\Delta_1),(\{(\Xi,\Theta)\in W_c\mid\psi\in\Xi\}, \{(\Xi,\Theta)\in W_c\mid\sim\psi\in\Xi\}),(\Gamma',\Delta'))\in R_c.
$$
The first part is trivial since we have $\Gamma_1\supseteq\Gamma_0 \supseteq\Gamma$ by the choice of $(\Gamma_1,\Delta_1)$ and $(\Gamma_0,\Delta_0)$. As for the second part, note that (a) for every $\theta\in\mathcal{L}_{\boxto}$, if $\psi\boxto\theta\in \Gamma_1$ and $\theta \notin \Gamma'$, then, by the completeness of $(\Gamma',\Delta')$, we must have $\theta \in \Delta'$. But then $\psi\boxto\theta\in\Delta_0\subseteq\Delta_1$, which contradicts the consistency  of $(\Gamma_1,\Delta_1)$. The obtained contradiction shows that  $\{\theta\mid\psi\boxto\theta\in \Gamma_1\}\subseteq \Gamma'$. Next, (b) we trivially get that $\{\sim(\psi\boxto\theta)\mid\sim\theta\in \Gamma'\}\subseteq\Gamma_0 \subseteq \Gamma_1$. Summing up (a) and (b), we get that $((\Gamma_1,\Delta_1),(\{(\Xi,\Theta)\in W_c\mid\psi\in\Xi\},\{(\Xi,\Theta)\in W_c\mid\sim\psi\in\Xi\}),(\Gamma',\Delta'))\in R_c$.

It remains to notice that, by IH, we must have 
$$
\|\psi\|_{\mathcal{M}_c} = (\{(\Xi,\Theta)\in W_c\mid\psi\in\Xi\},\{(\Xi,\Theta)\in W_c\mid\sim\psi\in\Xi\}),
$$
so that we have shown, in effect that $((\Gamma_1,\Delta_1),\|\psi\|_{\mathcal{M}_c},(\Gamma',\Delta'))\in R_c$.

Observe, next, that also $\chi\in \Delta'$, whence $\chi\notin\Gamma'$ by the consistency of $(\Gamma',\Delta')$. Therefore, by IH, $\mathcal{M}_c,(\Gamma',\Delta')\not\models^+ \chi$. Together with the fact that $(\Gamma,\Delta)\mathrel{\leq_c}(\Gamma_1,\Delta_1)$ and $((\Gamma_1,\Delta_1),\|\psi\|_{\mathcal{M}_c},(\Gamma',\Delta'))\in R_c$, this finally implies that $\mathcal{M}_c,(\Gamma',\Delta')\not\models^+ \psi\boxto\chi = \phi$.

\textit{Part 2}. ($\Leftarrow$)  Let $(\Gamma,\Delta)\in W_c$ be such that $\sim\phi = \sim(\psi \boxto \chi) \in \Gamma$. Then, by Lemma \ref{L:consistent}.4, the bi-set $(\{\sim\chi\}\cup\{\xi\mid\psi\boxto\xi \in \Gamma\},\{\sim\theta\mid\sim(\psi\boxto\theta)\in\Delta\})$ must be consistent, and, by Lemma \ref{L:lindenbaum}, there must be a maximal bi-set $(\Gamma',\Delta')\in W_c$ such that $
(\Gamma',\Delta')\supseteq (\{\sim\chi\}\cup\{\xi\mid\psi\boxto\xi \in \Gamma\},\{\sim\theta\mid\sim(\psi\boxto\theta)\in\Delta\})$.

Since $\sim\chi\in\Gamma'$, we must have $\mathcal{M}_c,(\Gamma',\Delta')\models^- \chi$ by IH. On the other hand, IH yields that  $\|\psi\|_{\mathcal{M}_c} = (\{(\Xi,\Theta)\in W_c\mid\psi\in\Xi\},\{(\Xi,\Theta)\in W_c\mid\sim\psi\in\Xi\})$. Next, by the choice of  $(\Gamma',\Delta')$, we know that $\{\xi\mid\phi\boxto\xi \in \Gamma\}\subseteq \Gamma'$. Finally, if $\sim\theta\in\Gamma'$, then, by the consistency of $(\Gamma',\Delta')$, $\sim\theta\notin\Delta'$, whence clearly $\sim(\psi\boxto\theta)\notin\Delta$. But then, by the completeness of  $(\Gamma,\Delta)$, we must have $\sim(\psi\boxto\theta)\in\Gamma$. We have thus shown that $\{\sim(\psi\boxto\theta)\mid\sim\theta\in\Gamma'\}\subseteq\Gamma$. Summing up, we must have $((\Gamma,\Delta),\|\psi\|_{\mathcal{M}_c},(\Gamma',\Delta'))\in R_c$, and, since we have also shown that $\mathcal{M}_c,(\Gamma',\Delta')\models^- \chi$, $\mathcal{M}_c,(\Gamma,\Delta)\models^- \psi\boxto\chi$ clearly follows.

($\Rightarrow$) Let $(\Gamma,\Delta)\in W_c$ be such that $\sim\phi = \sim(\psi \boxto \chi) \notin \Gamma$. Assume now that  $(\Gamma',\Delta')\in W_c$ is such that $((\Gamma,\Delta),\|\psi\|_{\mathcal{M}_c},(\Gamma',\Delta'))\in R_c$. Let $\theta\in\mathcal{L}_{\boxto}$ be such that all of the following holds:
\begin{align}
	&\|\psi\|_{\mathcal{M}_c} = (\{(\Xi,\Theta)\in W_c\mid\theta\in\Xi\},\{(\Xi,\Theta)\in W_c\mid\sim\theta\in\Xi\})\label{E:choice5}\\
	&\{\xi\mid\theta\boxto\xi\in \Gamma\}\subseteq \Gamma'\label{E:choice6}\\
	&\{\sim(\theta\boxto\xi)\mid\sim\xi\in \Gamma'\}\subseteq \Gamma\label{E:choice7}
\end{align}
By IH, we know that also $\|\psi\|_{\mathcal{M}_c} = (\{(\Xi,\Theta)\in W_c\mid\psi\in\Xi\},\{(\Xi,\Theta)\in W_c\mid\sim\psi\in\Xi\})$. We thus get that:
\begin{align}
	(\{(\Xi,\Theta)\in W_c\mid\psi\in\Xi\},&\{(\Xi,\Theta)\in W_c\mid\sim\psi\in\Xi\}) =\notag\\
	&= (\{(\Xi,\Theta)\in W_c\mid\theta\in\Xi\},\{(\Xi,\Theta)\in W_c\mid\sim\theta\in\Xi\})\label{E:choice8}
\end{align}
Since we have $\sim(\psi \boxto \chi) \notin \Gamma$, we know that, by Lemma \ref{L:representatives} and \eqref{E:choice8}, we must also have $\sim(\theta\boxto\chi) \notin \Gamma$. It follows now, by \eqref{E:choice7}, that we must have $\sim\chi\notin \Gamma'$. Next, IH implies that  $\mathcal{M}_c,(\Gamma',\Delta')\not\models^-\chi$. Since the choice of $(\Gamma',\Delta')\in W_c$ under the condition that $((\Gamma,\Delta),\|\psi\|_{\mathcal{M}_c},(\Gamma',\Delta'))\in R_c$
 was made arbitrarily, it follows that we must have$\mathcal{M}_c,(\Gamma,\Delta)\not\models^- \psi \boxto \chi = \phi$.
\end{proof}
The truth lemma allows us to deduce the (strong) soundness and completeness of $\mathbb{N}4\mathbb{CK}$ relative to $\mathsf{N4CK}$ in the usual way:
\begin{theorem}\label{T:completeness}
	$\mathsf{N4CK}= \mathbb{N}4\mathbb{CK}[\mathcal{L}_{\boxto}]$. In particular, for every $\phi\in\mathcal{L}_{\boxto}$, $\vdash\phi\Leftrightarrow(\phi\in\mathsf{N4CK})$.
\end{theorem}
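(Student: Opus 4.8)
The plan is to assemble Theorem~\ref{T:completeness} from the pieces already established: Lemma~\ref{L:soundness} supplies soundness for theorems, while Lemmas~\ref{L:lindenbaum}, \ref{L:canonical-model}, and \ref{L:truth} supply the canonical-model machinery for completeness. I would split the equation $\mathsf{N4CK} = \mathbb{N}4\mathbb{CK}[\mathcal{L}_{\boxto}]$ into its two inclusions, and then obtain the displayed biconditional $\vdash\phi\Leftrightarrow(\phi\in\mathsf{N4CK})$ as the special case $\Gamma=\emptyset$, $\Delta=\{\phi\}$, recalling that $\phi\in\mathsf{N4CK}$ abbreviates $(\emptyset,\{\phi\})\in\mathsf{N4CK}$ and $\vdash\phi$ abbreviates $(\emptyset,\{\phi\})\in\mathbb{N}4\mathbb{CK}[\mathcal{L}_{\boxto}]$.

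For the inclusion $\mathbb{N}4\mathbb{CK}[\mathcal{L}_{\boxto}]\subseteq\mathsf{N4CK}$ (soundness for pairs), suppose $\Gamma\vdash\Delta$ and let $(\mathcal{M},w)$ be any pointed model in $CNel$ with $\mathcal{M},w\models^+\phi$ for all $\phi\in\Gamma$. Fix a witnessing sequence $\chi_1,\ldots,\chi_r$ as in the definition of $\mathbb{N}4\mathbb{CK}[\mathcal{L}_{\boxto}]$, so $\chi_r=\theta_1\vee\cdots\vee\theta_s$ with $\theta_1,\ldots,\theta_s\in\Delta$. A trivial induction along the sequence shows $\mathcal{M},w\models^+\chi_i$ for every $i$: members of $\Gamma$ are verified by hypothesis, provable formulas are verified by Lemma~\ref{L:soundness}, and \eqref{E:mp} preserves verification at $w$ since $\leq$ is reflexive. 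Hence $\mathcal{M},w\models^+\theta_1\vee\cdots\vee\theta_s$, so by clause \eqref{Cl:dis+} some $\theta_j\in\Delta$ is verified at $(\mathcal{M},w)$. Thus no pointed model verifies all of $\Gamma$ while failing to verify every member of $\Delta$, i.e. $(\Gamma,\Delta)\in\mathsf{N4CK}$.

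For the inclusion $\mathsf{N4CK}\subseteq\mathbb{N}4\mathbb{CK}[\mathcal{L}_{\boxto}]$ (completeness), I would argue by contraposition. Suppose $\Gamma\not\vdash\Delta$, i.e. $(\Gamma,\Delta)$ is consistent. If $\Delta=\emptyset$, then $(\Gamma,\emptyset)\notin\mathsf{N4CK}$ already by Proposition~\ref{P:satisfiable}; so assume $\Delta\neq\emptyset$ and use Lemma~\ref{L:lindenbaum} to extend $(\Gamma,\Delta)$ to a maximal bi-set $(\Xi,\Theta)$, which lies in $W_c$, with $\mathcal{M}_c\in CNel$ by Lemma~\ref{L:canonical-model}. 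Consistency of $(\Xi,\Theta)$ gives $\Xi\cap\Theta=\emptyset$ (otherwise the one-term sequence consisting of a common member would witness $\Xi\vdash\Theta$), so $\Gamma\subseteq\Xi$ while $\Delta\subseteq\Theta=\mathcal{L}_{\boxto}\setminus\Xi$. By the Truth Lemma~\ref{L:truth}.1, $\mathcal{M}_c,(\Xi,\Theta)\models^+\phi$ for every $\phi\in\Gamma$ and $\mathcal{M}_c,(\Xi,\Theta)\not\models^+\psi$ for every $\psi\in\Delta$; hence $(\Gamma,\Delta)\notin\mathsf{N4CK}$. Combining the two inclusions yields the equality, and the biconditional follows as noted above.

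As for the main obstacle: there is essentially none in this theorem itself, since all the genuine labour has been front-loaded into the canonical-model construction (Lemma~\ref{L:canonical-model}, whose verification of conditions \eqref{Cond:1} and \eqref{Cond:2} is the technical heart) and into the $\boxto$-case of the Truth Lemma~\ref{L:truth}, which is where the axioms \eqref{E:a1}--\eqref{E:a4} and the congruence rules \eqref{E:RAbox}--\eqref{E:RCbox2} are really used, via Lemmas~\ref{L:consistent} and \ref{L:maximal}. The only point needing a moment's care here is the multiple-conclusion bookkeeping: one must remember that derivability of a pair produces a single disjunctive endpoint $\theta_1\vee\cdots\vee\theta_s$, match this against clause \eqref{Cl:dis+} on the semantic side, and separately dispose of the $\Delta=\emptyset$ case via Proposition~\ref{P:satisfiable} --- which is exactly the ``subtle gap'' between Kripke semantics and Hilbert calculi flagged in Section~\ref{S:Prel}.
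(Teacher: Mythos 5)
Your proposal is correct and takes essentially the same route as the paper: soundness for pairs by induction along the witnessing derivation using Lemma \ref{L:soundness} together with \eqref{Cl:dis+}, and completeness by contraposition via Lindenbaum extension (Lemma \ref{L:lindenbaum}) and the Truth Lemma \ref{L:truth} for the canonical model of Lemma \ref{L:canonical-model}. The only cosmetic difference is your separate disposal of the case $\Delta=\emptyset$ via Proposition \ref{P:satisfiable}; the paper handles that case uniformly, since $(\Gamma,\emptyset)$ is automatically consistent and the canonical-model argument covers it vacuously.
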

\begin{proof}
	Let $(\Gamma, \Delta) \in \mathcal{P}(\mathcal{L}_{\boxto})\times\mathcal{P}(\mathcal{L}_{\boxto})$. We will show that $\Gamma\models\Delta$ iff $\Gamma\vdash\Delta$.
	
	($\Leftarrow$) We argue by contraposition. First, we show the following claim by induction on the length of a derivation $\psi_1,\ldots,\psi_n = \phi$ of $\phi$ from the premises in $\Gamma$:
	
	\textit{Claim}. If $\Gamma\vdash\phi$, then $\Gamma\models \phi$.
	
	If now $\Gamma\vdash\Delta$, then, by definition, $\Delta \neq \emptyset$, and we must have $\Gamma\vdash\psi_1\vee\ldots\vee\psi_n$ for some $\psi_1,\ldots,\psi_n\in \Delta$. But then the Claim implies that $\Gamma \models \psi_1\vee\ldots\vee\psi_n$, whence clearly $\Gamma\models \{\psi_1,\ldots,\psi_n\}$, so that $\Gamma\models\Delta$ holds as well.
	
	($\Rightarrow$) If $\Gamma\not\vdash\Delta$, then, by Lemma \ref{L:lindenbaum}, choose any maximal $(\Gamma',\Delta')\supseteq (\Gamma,\Delta)$. By Lemma \ref{L:truth}, we have $\mathcal{M}_c,(\Gamma',\Delta')\models(\Gamma,\Delta)$.
	
	We have thus shown Theorem \ref{T:completeness}. In particular we have shown that, for every $\phi \in \mathcal{L}$, $\vdash\phi$ iff $(\emptyset,\{\phi\})$ is inconsistent iff $\phi\in\mathsf{IntCK}$.
\end{proof}
As a usual corollary, we obtain the compactness of $\mathsf{N4CK}$ for bi-sets:
\begin{corollary}\label{C:compactness}
	For $(\Gamma,\Delta)\in \mathcal{P}(\mathcal{L}_{\boxto})\times\mathcal{P}(\mathcal{L}_{\boxto})$, we have $\Gamma\not\models\Delta$ iff, for every $(\Gamma',\Delta')\Subset(\Gamma,\Delta)$, $\Gamma'\not\models\Delta'$.
\end{corollary}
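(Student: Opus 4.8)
The statement is a routine compactness corollary: it follows by combining the strong completeness theorem (Theorem \ref{T:completeness}, which gives $\Gamma\models\Delta$ iff $\Gamma\vdash\Delta$) with the observation that a Hilbert-style derivation in $\mathbb{N}4\mathbb{CK}$ is a finite object using only finitely many premises. I would prove the two directions separately.

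For the ($\Rightarrow$) direction, which is just monotonicity of $\models$ in both arguments, I would argue directly from the semantics. If $\Gamma\not\models\Delta$, then by the definition of $\mathsf{N4CK}=\mathsf{L}(\mathcal{L}_{\boxto}, CNel, \models^+)$ there is a pointed model $(\mathcal{M},w)\in PMod$ with $\mathcal{M},w\models^+\phi$ for every $\phi\in\Gamma$ and $\mathcal{M},w\not\models^+\psi$ for every $\psi\in\Delta$. Now fix any $(\Gamma',\Delta')\Subset(\Gamma,\Delta)$; since $\Subset$ on bi-sets is componentwise, $\Gamma'\subseteq\Gamma$ and $\Delta'\subseteq\Delta$, so the same $(\mathcal{M},w)$ witnesses $\Gamma'\not\models\Delta'$. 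No subtlety arises here.

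For the ($\Leftarrow$) direction I would argue by contraposition: assume $\Gamma\models\Delta$ and produce a finite $(\Gamma',\Delta')\Subset(\Gamma,\Delta)$ with $\Gamma'\models\Delta'$. By Theorem \ref{T:completeness}, $\Gamma\models\Delta$ yields $\Gamma\vdash\Delta$, i.e. $(\Gamma,\Delta)\in\mathbb{N}4\mathbb{CK}[\mathcal{L}_{\boxto}]$, so by the definition of $\mathbb{S}[Lang]$ there is a finite sequence $\chi_1,\dots,\chi_r$ of $\mathcal{L}_{\boxto}$-formulas such that each $\chi_i$ is either in $\Gamma$, or provable in $\mathbb{N}4\mathbb{CK}$, or obtained by \eqref{E:mp} from two earlier members of the sequence, and $\chi_r=\theta_1\vee\dots\vee\theta_s$ for some $\theta_1,\dots,\theta_s\in\Delta$. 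Set $\Gamma':=\{\chi_i\mid 1\le i\le r,\ \chi_i\in\Gamma\}$ and $\Delta':=\{\theta_1,\dots,\theta_s\}$. Both sets are finite and contained in $\Gamma$, respectively $\Delta$, so $(\Gamma',\Delta')\Subset(\Gamma,\Delta)$. The very same sequence $\chi_1,\dots,\chi_r$ now witnesses $(\Gamma',\Delta')\in\mathbb{N}4\mathbb{CK}[\mathcal{L}_{\boxto}]$, since every term that was justified as a premise lies in $\Gamma'$, and nothing in the sequence refers to any formula of $\Gamma\setminus\Gamma'$; hence $\Gamma'\vdash\Delta'$, and one more application of Theorem \ref{T:completeness} gives $\Gamma'\models\Delta'$, as required.

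The only point requiring care is the bookkeeping in the definition of $\mathbb{N}4\mathbb{CK}[\mathcal{L}_{\boxto}]$ — that $s>0$ (so $\Delta'\ne\emptyset$, which is automatic since the witnessing sequence ends in a genuine disjunction of members of $\Delta$) and that restricting the premise set to $\Gamma'$ leaves the derivation intact. There is no genuine obstacle; the corollary is immediate once finiteness of derivations is combined with strong completeness.
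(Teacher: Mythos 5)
Your proof is correct and follows essentially the same route as the paper: the easy direction by monotonicity of the semantic witness, and the converse by contraposition, applying Theorem \ref{T:completeness} to get a derivation, restricting to the finitely many premises and disjuncts it actually uses, and applying the theorem once more. (The paper's printed version of this argument contains a couple of stray negation signs, but its intended argument is the one you give.)
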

\begin{proof}
	The ($\Rightarrow$)-part is straightforward, as for the converse, we argue by contraposition. If $\Gamma\not\models\Delta$, then, by Theorem \ref{T:completeness}, $\Gamma\not\vdash\Delta$, therefore, for some $\psi_1,\ldots,\psi_n\in \Delta$ we have $\Gamma\vdash\psi_1\vee\ldots\vee\psi_n$. Let $\chi_1,\ldots,\chi_m$ be any derivation of $\psi_1\vee\ldots\vee\psi_n$ from the premises in $\Gamma$ and let $\phi_1,\ldots,\phi_k$ be a list of all formulas from $\Gamma$ occurring among $\chi_1,\ldots,\chi_m$. Then $\chi_1,\ldots,\chi_m$ also shows that $\{\phi_1,\ldots,\phi_k\}\vdash\{\psi_1,\ldots,\psi_n\}$ and hence, by Theorem \ref{T:completeness}, that $\{\phi_1,\ldots,\phi_k\}\models\{\psi_1,\ldots,\psi_n\}$. 
\end{proof}
\begin{remark}\label{R:2}
	A similar argument shows that the logic $\mathsf{N4CK}'$, mentioned in Remark \ref{R:1} is axiomatized by extending $\mathbb{N}4\mathbb{CK}$ with the following rule:
	\begin{align}
		\text{From }\phi\leftrightarrow\psi &\text{ infer } (\phi\boxto\chi)\Leftrightarrow(\psi\boxto\chi)\label{E:RA'box}\tag{RA'$\Box$}
	\end{align}	
\end{remark}

\section{Relations with other logics}\label{S:other}
In the existing literature, one can find several systems which can be viewed as natural companions to $\mathsf{N4CK}$. In this paper, we confine ourselves to mentioning but a few prominent examples that fall into two groups: other conditional logics and modal logics. We treat these groups in the two subsections of the present section, and, considering the length of this paper, most of our claims will only be supplied with a rather sketchy proof. 

\subsection{Conditional logics}\label{sub:conditional}

The first of the systems that we would like to consider is the basic system $\mathsf{CK}$ of classical conditional logic, introduced in \cite{chellas} and defined over $\mathcal{L}_{\boxto}$. In particular, it is shown in \cite{chellas} that we have $\mathsf{CK} = (\mathbb{CL}+(\eqref{E:a1},\eqref{E:a4};\eqref{E:RAbox},\eqref{E:RCbox1}))^\iota[\mathcal{L}_{\boxto}]$. 

 It is natural to expect that the relation between $\mathsf{CK}$ and $\mathsf{N4CK}$ is similar to the relation established by Proposition \ref{P:prop}.4 between their respective propositional bases, namely $\mathbb{CL}$ and $\mathbb{N}4$. This is indeed the case, as we will show presently. We prepare the result with a technical lemma:
\begin{lemma}\label{L:CK}
	Every instance of \eqref{E:a2}, \eqref{E:a3}, and \eqref{E:RCbox2} as well as all the theorems and derived rules given in Lemma \ref{L:theorems}, are deducible in $\mathsf{CK}$. 
\end{lemma}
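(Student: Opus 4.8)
The plan is to leverage the fact that $\mathsf{CK}$ is built over the \emph{classical} propositional base $\mathbb{CL}$ rather than over $\mathbb{N}4$. By Proposition~\ref{P:prop}.4, $\mathsf{CL} = (\mathbb{N}4+(\eqref{E:a0.6},\eqref{E:a0.7};))^\iota[\mathcal{L}]$, so that, reasoning inside $\mathsf{CK}$, we may freely use $\sim\sim\phi\leftrightarrow\phi$, the De Morgan laws \eqref{E:a0.1}--\eqref{E:a0.4}, classical contraposition, and hence also the two ``collapses'' $\vdash_{\mathsf{CK}}(\phi\Rightarrow\psi)\leftrightarrow(\phi\to\psi)$ and $\vdash_{\mathsf{CK}}(\phi\Leftrightarrow\psi)\leftrightarrow(\phi\leftrightarrow\psi)$; in particular a $\mathsf{CK}$-version of Lemma~\ref{L:n4}.1 holds. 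Recall also that \eqref{E:a1}, \eqref{E:a4}, \eqref{E:RAbox} and \eqref{E:RCbox1} are already primitive in the chosen presentation of $\mathsf{CK}$. I would record these collapse facts first; with them in hand, \eqref{E:RCbox2} is immediate (from $\sim\phi\leftrightarrow\sim\psi$ infer $\phi\leftrightarrow\psi$ classically, apply \eqref{E:RCbox1}, and push $\sim$ back in), and \eqref{E:RCbox} reduces to \eqref{E:RCbox1} after collapsing $\Leftrightarrow$ to $\leftrightarrow$ and back.

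Next I would assemble the ``normality package''. \eqref{E:Rnec} follows from \eqref{E:a4}: from $\vdash\phi$ one gets $\vdash\phi\leftrightarrow(\psi\to\psi)$, so \eqref{E:RCbox1} yields $\vdash(\psi\boxto\phi)\leftrightarrow(\psi\boxto(\psi\to\psi))$, and \eqref{E:a4} supplies the right-hand conjunct. \eqref{E:Rmbox} is the standard monotonicity argument: from $\phi\to\psi$ get $\phi\leftrightarrow(\phi\wedge\psi)$, apply \eqref{E:RCbox1}, then use \eqref{E:a1} (which, $\Leftrightarrow$ having collapsed, is simply the usual distribution of $\boxto$ over $\wedge$) to pass from $\chi\boxto(\phi\wedge\psi)$ to $\chi\boxto\psi$. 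Then \eqref{E:Rnbox} is \eqref{E:Rmbox} read contrapositively, and \eqref{E:Rmdiam} follows from \eqref{E:Rnbox} by unfolding $\phi\diamondto\psi := \sim(\phi\boxto\sim\psi)$ and using double negation. Finally \eqref{E:T1}--\eqref{E:T6} are obtained by unfolding $\diamondto$, pushing $\sim$ through antecedents and consequents with \eqref{E:a0.1}--\eqref{E:a0.4}, and combining \eqref{E:a1}, \eqref{E:Rmbox}, \eqref{E:RCbox1} with routine classical propositional steps --- e.g. \eqref{E:T1} from the tautology $((\psi\to\chi)\wedge\psi)\to\chi$ via \eqref{E:a1} and \eqref{E:Rmbox}, \eqref{E:T6} from $\sim\sim\psi\leftrightarrow\psi$ via \eqref{E:RCbox1}, and \eqref{E:T4} from $\sim(\psi\vee\chi)\leftrightarrow(\sim\psi\wedge\sim\chi)$ via \eqref{E:RCbox1} and \eqref{E:a1}.

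It then remains to derive \eqref{E:a2} and \eqref{E:a3} in $\mathsf{CK}$. For \eqref{E:a2}, I would first prove $\vdash_{\mathsf{CK}}((\phi\boxto(\psi\vee\sim\chi))\wedge(\phi\boxto\chi))\to(\phi\boxto\psi)$, using \eqref{E:a1} to merge the two conditionals and then \eqref{E:Rmbox} together with the tautology $((\psi\vee\sim\chi)\wedge\chi)\to\psi$; contraposing this classically and using that $\phi\boxto\chi$ refutes $\sim(\phi\boxto\chi)$ then gives \eqref{E:a2}. For \eqref{E:a3}, after unfolding $\phi\diamondto\psi$ to $\sim(\phi\boxto\sim\psi)$, I would split on the excluded middle for $\phi\boxto\sim\psi$: if $\phi\boxto\sim\psi$ holds then $\phi\boxto(\psi\to\chi)$ follows from $\sim\psi\to(\psi\to\chi)$ and \eqref{E:Rmbox}; if it fails, then $\sim(\phi\boxto\sim\psi)$, i.e. $\phi\diamondto\psi$, holds, so the hypothesis of \eqref{E:a3} delivers $\phi\boxto\chi$, whence $\phi\boxto(\psi\to\chi)$ by $\chi\to(\psi\to\chi)$ and \eqref{E:Rmbox}.

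The work here is essentially bookkeeping: the only genuinely new ingredients compared with the derivations in $\mathbb{N}4\mathbb{CK}$ are the collapse of $\Rightarrow,\Leftrightarrow$ to $\to,\leftrightarrow$ and the availability of classical reasoning about $\sim$. The point that most wants care is that \eqref{E:a1} --- the sole ``real'' conditional axiom besides \eqref{E:a4} in the $\mathsf{CK}$-presentation --- together with \eqref{E:Rmbox} really does the job that the $\mathbb{N}4\mathbb{CK}$-specific falsification axioms \eqref{E:a2} and \eqref{E:a3} do; it is precisely in the classical contraposition used for \eqref{E:a2} and the case split used for \eqref{E:a3} that the classicality of the base, rather than mere $\mathsf{N4}$-reasoning, is genuinely invoked.
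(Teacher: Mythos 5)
Your proposal is correct and follows essentially the same route as the paper: the appendix likewise derives \eqref{E:a2} by classical contraposition from a positive $\boxto$-fact obtained via \eqref{E:Rmbox} and \eqref{E:T1}/\eqref{E:a1}, derives \eqref{E:a3} by exactly your excluded-middle case split on $\phi\boxto\sim\psi$, and gets \eqref{E:RCbox2} from \eqref{E:RCbox1} by stripping and restoring $\sim$ classically. The only cosmetic difference is that, once these are in place, the paper simply re-runs the $\mathbb{N}4\mathbb{CK}$ derivations of Lemma \ref{L:theorems} verbatim, whereas you sketch direct classical shortcuts for \eqref{E:T1}--\eqref{E:T6}; both are fine.
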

We sketch the proof in Appendix \ref{A:2}.

The relation between $\mathsf{N4CK}$ and $\mathsf{CK}$ can then be formulated as follows:
\begin{proposition}\label{P:CK}
	 The following statements are true for every $\phi\in\mathcal{L}_{\boxto}$:
	 \begin{enumerate}
	 	\item If $\phi\in\mathsf{N4CK}$, then $\phi\in \mathsf{CK}$.
	 	
	 	\item $\mathbb{CK}^\iota[\mathcal{L}_{\boxto}] = (\mathbb{N}4\mathbb{CK}+(\eqref{E:a0.6},\eqref{E:a0.7};))^\iota[\mathcal{L}_{\boxto}]$.
	 \end{enumerate}
 \end{proposition}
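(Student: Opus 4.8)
The plan is to turn both parts into a proof-theoretic comparison of Hilbert systems. Write $\mathbb{CK}$ for the axiomatic system $\mathbb{CL}+(\eqref{E:a1},\eqref{E:a4};\eqref{E:RAbox},\eqref{E:RCbox1})$, so that, by the result of \cite{chellas} quoted just above, $\mathsf{CK}=\mathbb{CK}^\iota[\mathcal{L}_{\boxto}]$. By Theorem \ref{T:completeness}, $\phi\in\mathsf{N4CK}$ is the same as $\vdash_{\mathbb{N}4\mathbb{CK}}\phi$, and, $\phi$ being a genuine formula, $\phi\in\mathsf{CK}$ is the same as $\vdash_{\mathbb{CK}}\phi$. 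Hence Part 1 reduces to showing that every $\mathbb{N}4\mathbb{CK}$-theorem is a $\mathbb{CK}$-theorem, and Part 2 reduces to showing that $\mathbb{N}4\mathbb{CK}+(\eqref{E:a0.6},\eqref{E:a0.7};)$ and $\mathbb{CK}$ prove exactly the same $\mathcal{L}_{\boxto}$-formulas. I would establish the latter by a pair of inclusions, each proved by a routine induction on the length of a derivation once every axiom and rule of the one system is seen to be derivable in the other.

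For the inclusion $\vdash_{\mathbb{N}4\mathbb{CK}+(\eqref{E:a0.6},\eqref{E:a0.7};)}\ \subseteq\ \vdash_{\mathbb{CK}}$ (which in particular yields Part 1, since $\mathbb{N}4\mathbb{CK}$ is a subsystem of the left-hand system), I would check each ingredient of the left system. The propositional schemes \eqref{E:a0.1}--\eqref{E:a0.4} of $\mathbb{N}4$ (double negation, De Morgan, negated implication) are classical tautology schemes, hence $\mathbb{CL}$-theorems and a fortiori $\mathbb{CK}$-theorems, as are the added schemes \eqref{E:a0.6}, \eqref{E:a0.7}, which are literally $\mathbb{CL}$-axioms; the rest of the $\mathbb{N}4$-base is $\mathbb{IL}^+$, already contained in $\mathbb{CL}$. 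The conditional axioms \eqref{E:a1}, \eqref{E:a4} and the rules \eqref{E:RAbox}, \eqref{E:RCbox1} are primitives of $\mathbb{CK}$, while the remaining conditional ingredients \eqref{E:a2}, \eqref{E:a3}, \eqref{E:RCbox2} are deducible in $\mathsf{CK}$ by Lemma \ref{L:CK}. Since \eqref{E:mp} is shared, the induction goes through.

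For the converse inclusion I would check each ingredient of $\mathbb{CK}$. The conditional primitives \eqref{E:a1}, \eqref{E:a4}, \eqref{E:RAbox}, \eqref{E:RCbox1} are already among the ingredients of $\mathbb{N}4\mathbb{CK}$, so the only work concerns the propositional base $\mathbb{CL}$: one must show that every substitution instance over $\mathcal{L}_{\boxto}$ of a $\mathbb{CL}$-axiom scheme is provable in $\mathbb{N}4\mathbb{CK}+(\eqref{E:a0.6},\eqref{E:a0.7};)$. The schemes $\alpha_1$--$\alpha_8$, \eqref{E:a0.6}, \eqref{E:a0.7} are literally among its axioms, and the one remaining scheme \eqref{E:a0.5} is a $\mathbb{CL}$-theorem; by Proposition \ref{P:prop}.4 the system $\mathbb{N}4+(\eqref{E:a0.6},\eqref{E:a0.7};)$ has the same theorems over $\mathcal{L}$ as $\mathbb{CL}$, so $\mathbb{N}4\mathbb{CK}+(\eqref{E:a0.6},\eqref{E:a0.7};)$, being an extension, proves \eqref{E:a0.5} over $\mathcal{L}$ and hence (the scheme being schematic; cf. the substitution-closure recorded in Lemma \ref{L:n4}.1, whose proof is insensitive to the presence of $\boxto$) over $\mathcal{L}_{\boxto}$ as well. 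Closing under \eqref{E:mp} then yields all $\mathbb{CL}$-theorems over $\mathcal{L}_{\boxto}$, and the induction finishes this inclusion.

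Combining the inclusions, $\mathbb{CK}$ and $\mathbb{N}4\mathbb{CK}+(\eqref{E:a0.6},\eqref{E:a0.7};)$ have the same set of provable $\mathcal{L}_{\boxto}$-formulas. For a fixed language, the consequence relation $\mathbb{S}[\mathcal{L}_{\boxto}]$ depends on $\mathbb{S}$ only through this set of provable formulas together with \eqref{E:mp} (common to both), so the two systems induce the same $\mathbb{S}[\mathcal{L}_{\boxto}]$; and the $\iota$-construction in turn only inspects $\mathbb{S}[\mathcal{L}_{\boxto}]$ and the pair with second component $\{p_1\wedge\sim p_1\}$, so $\mathbb{CK}^\iota[\mathcal{L}_{\boxto}]=(\mathbb{N}4\mathbb{CK}+(\eqref{E:a0.6},\eqref{E:a0.7};))^\iota[\mathcal{L}_{\boxto}]$, which is Part 2. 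The only genuinely delicate points are (i) lifting the $\mathbb{CL}$-base from $\mathcal{L}$ to $\mathcal{L}_{\boxto}$, handled by the substitution remark, and (ii) the bookkeeping that equality of theorem-sets propagates through the $[\cdot]$ and $\iota$ operators; the real conditional-logic content has all been pushed into Lemma \ref{L:CK}, so no new ideas are needed beyond it.
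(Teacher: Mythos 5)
Your proposal is correct and follows essentially the same route as the paper: the paper's proof is simply ``By Lemma \ref{L:CK}'', i.e.\ exactly the translation-of-derivations argument you spell out, with Lemma \ref{L:CK} supplying the non-primitive conditional ingredients \eqref{E:a2}, \eqref{E:a3}, \eqref{E:RCbox2} in $\mathsf{CK}$ and Proposition \ref{P:prop}.4 handling the propositional bases. Your additional bookkeeping (schematic lifting from $\mathcal{L}$ to $\mathcal{L}_{\boxto}$ and the observation that $\mathbb{S}[\cdot]$ and the $\iota$-operator depend only on the set of provable formulas plus \eqref{E:mp}) is just the routine detail the paper leaves implicit.
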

 \begin{proof}
 	By Lemma \ref{L:CK}.
 \end{proof}
Yet another logic that is very natural to compare with $\mathsf{N4CK}$ is the extended positive fragment $\mathsf{IntCK}^{e+}$ of the system of intuitionistic conditional logic $\mathsf{IntCK}$ introduced in \cite{olkhovikov}. The language of $\mathsf{IntCK}^{e+}$ is $\mathcal{L}^{e+}_{(\boxto,\diamondto)}$; in other words, $\sim$ is omitted, the additional variables from $Var'$ are included, and $\diamondto$ is no longer an abbreviation, but an elementary connective. 

The Chellas version of Kripke semantics for $\mathsf{IntCK}^{e+}$ is based on the class $CInt^e$ of extended conditional intuitionistic models given by the following definition:
\begin{definition}\label{D:int-model}
	An \textit{extended conditional intuitionistic model} is a structure of the form $\mathcal{M} = (W, \leq, R, V)$, where $(W, \leq, V)\in Int^e$ and  $R \subseteq W \times \mathcal{P}(W)\times W$. Thus, for every $X \subseteq W$, $R$ induces a binary relation $R_X$ on $W$ such that, for all $w,v \in W$, $R_X(w,v)$ iff $R(w, X, v)$. For every $X \subseteq W$, $R$ must satisfy the following conditions:
	\begin{align}
		(\leq^{-1}\circ R_X) &\subseteq (R_X\circ\leq^{-1})\label{Cond:1i}\tag{c1-i}\\
		(R_X\circ\leq) &\subseteq (\leq\circ R_X)\label{Cond:2i}\tag{c2-i}
	\end{align}	
\end{definition}
In case $\mathcal{M} = (W, \leq, R, V) \in CInt^e$, the structure $\mathcal{M} = (W, \leq, R, V\upharpoonright Var)$ is called \textit{conditional intuitionistic model}. We will denote by $CInt$ the class of all such models.

The semantics of $\mathsf{IntCK}^{e+}$ also uses just one satisfaction relation (we will denote it by $\models^i$) in place of the two relations, $\models^+$ and $\models^-$ of $\mathsf{N4CK}$. The inductive definition of $\models^i$ includes every clause given for $\models_{\mathsf{IL}}$ in Section \ref{S:basis} plus the following clauses for the conditional connectives:
\begin{align*}
	\mathcal{M}, w&\models^i \psi \boxto \chi \Leftrightarrow (\forall v \geq w)(\forall u \in W)(R_{\|\psi\|^i_\mathcal{M}}(v, u) \Rightarrow\mathcal{M}, u\models^i \chi)\\
	\mathcal{M}, w&\models^i \psi \diamondto \chi \Leftrightarrow (\exists u \in W)(R_{\|\psi\|^i_\mathcal{M}}(w, u)\text{ and }\mathcal{M}, u\models^i \chi)	
\end{align*}
where we assume, for any given $\phi \in \mathcal{L}$, that $\|\phi\|^i_\mathcal{M}$ stands for the set $\{w \in W\mid \mathcal{M}, w\models^i\phi\}$.

We now define that $\mathsf{IntCK}^{e+} := \mathsf{L}(\mathcal{L}^{e+}_{(\boxto,\diamondto)}, CInt^e, \models^i)$. Of course the non-positive variant of the same logic is also possible; namely we can define $\mathsf{IntCK} := \mathsf{L}(\mathcal{L}_{(\boxto,\diamondto)}, CInt, \models^i)$. The paper \cite{olkhovikov} looks into $\mathsf{IntCK}$ in more detail.
 
It is easy to show that the $\{\boxto, \diamondto\}$-free fragment of $\mathsf{IntCK}^{e+}$ is exactly $\mathsf{IL}^{e+}$. It is therefore natural to expect, again, that the relation between $\mathsf{N4CK}$ and $\mathsf{IntCK}^{e+}$ resembles the relation between their propositional bases $\mathsf{N4}$ and $\mathsf{IL}^{e+}$ and that, therefore, some variant of Proposition \ref{P:prop-embedding} can be proven for the two conditional logics.

This is indeed the case. One option is to extend $E$ to the following mapping $E^\pm: (\mathcal{L}_{\boxto})\to \mathcal{L}^{e+}_{(\boxto,\diamondto)}$; its definition includes every clause from the definition of $E$ plus the following clauses for $\boxto$:
\begin{align*}
	E^\pm(\phi\boxto \psi)&:= E^\pm(\phi)\boxto (E^\pm(\sim\phi) \boxto E^\pm(\psi))\\
	E^\pm(\sim(\phi\boxto \psi))&:= E^\pm(\phi)\diamondto (E^\pm(\sim\phi) \diamondto E^\pm(\sim\psi))
\end{align*}
We begin by proving two technical lemmas which are in an obvious correspondence with Claims 1 and 2 made in the proof of Proposition \ref{P:prop-embedding}:

\begin{lemma}\label{L:claim1}
	Assume that $\mathcal{M} = (W, \leq, R, V^+, V^-) \in CNel$. Then let $\mathcal{M}^i = (W^i, \leq^i, R^i, V^i)$ be defined as follows:
	\begin{itemize}
		\item $W^i:= W \cup R = W \cup \{(w, (X,Y), v)\mid (w, (X,Y), v) \in R\}$.
		
		\item $\leq^i:= \leq \cup \{((w, (X,Y), v), (w', (X,Y), v'))\mid w \leq w',\,v\leq v'\}$.
		
		\item $R^i:= \{(w, X', (w, (X,Y), v)), ((w, (X,Y), v), Y', v)\mid w,v \in W,\,(w, (X,Y), v) \in R,\,X'\cap W = X,\,Y'\cap W = Y\}$.
		
		\item $V(p_i): = V^+(p_i)$ and $V(q_i) := V^-(p_i)$ for every $i \in \omega$. 
	\end{itemize} 
Then the following statements are true:
\begin{enumerate}
	\item $\mathcal{M}^i \in CInt^{e}$.
	
	\item For every $w \in W$ and every $\phi \in \mathcal{L}_{\boxto}$, we have $\mathcal{M}, w \models^+ \phi$ iff $\mathcal{M}^i, w \models^i E^\pm(\phi)$.
\end{enumerate}
\end{lemma}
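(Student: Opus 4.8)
The plan is to settle the two claims in turn, with essentially all of the work residing in the second. For \emph{Part 1}, I would check: $W^i\supseteq W\neq\emptyset$; that $R^i\subseteq W^i\times\mathcal{P}(W^i)\times W^i$ by construction; that $\leq^i$ is a pre-order, reflexivity and transitivity on the ``old'' worlds being inherited from $\leq$ and on the triple-worlds obtained coordinatewise (the middle entry $(X,Y)$ is carried along unchanged, and membership of $(w,(X,Y),v)$ in $R$ is exactly what makes it a world of $\mathcal{M}^i$); and that $V^i$ satisfies \eqref{Cond:mon}, which is immediate since $V^i$ takes values in $\mathcal{P}(W)$, the sets $V^+(p_i),V^-(p_i)$ are $\leq$-upward closed in $\mathcal{M}$, and every $\leq^i$-successor of an old world is again an old world. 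For \eqref{Cond:1i} and \eqref{Cond:2i} I would split on whether the source of the given $R^i_{X'}$-step is an old world or a triple. If it is an old world $w$, the step is $R^i_{X'}(w,(w,(X,Y),v))$ with $X'\cap W=X$ and $R_{(X,Y)}(w,v)$ in $\mathcal{M}$, and the required diagram is completed by applying \eqref{Cond:1}, resp.\ \eqref{Cond:2}, of $\mathcal{M}$ to that underlying $R_{(X,Y)}$-edge and carrying $(X,Y)$ along. If the source is a triple $(w,(X,Y),v)$, the step is $R^i_{X'}((w,(X,Y),v),v)$ with $X'\cap W=Y$; for \eqref{Cond:1i} the conclusion is read off directly from the definition of $\leq^i$ on triples, and for \eqref{Cond:2i} one applies \eqref{Cond:2} of $\mathcal{M}$ to the underlying $R_{(X,Y)}$-edge $(w,v)$.

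For \emph{Part 2} I would argue by induction on the total number of symbols in $\phi$; this measure, rather than the count of non-$\sim$ connectives, is forced by the clause $E^\pm(\sim\sim\psi):=E^\pm(\psi)$. The form of the induction hypothesis that I will actually use --- obtained by restricting the stated biconditional to $W$ and combining it with \eqref{Cl:neg+} --- is that $\|E^\pm(\psi)\|^i_{\mathcal{M}^i}\cap W=\|\psi\|^+_{\mathcal{M}}$ and $\|E^\pm(\sim\psi)\|^i_{\mathcal{M}^i}\cap W=\|\psi\|^-_{\mathcal{M}}$ for every proper subformula $\psi$ of $\phi$. The base cases $\phi=p$ and $\phi=\sim p$ are read off from the definitions of $V^i$, and the propositional cases ($\wedge,\vee,\to$ and their negations, and $\sim\sim$) reduce through the $\mathsf{N4}$-falsification clauses to positive subformulas exactly as in Claim~1 of Proposition~\ref{P:prop-embedding}.

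The genuine content, and the step I expect to be the main obstacle, is the pair $\phi=\psi\boxto\chi$ and $\phi=\sim(\psi\boxto\chi)$. For the former I would unfold $E^\pm(\psi)\boxto(E^\pm(\sim\psi)\boxto E^\pm(\chi))$ at an old world $w$: any $R^i$-step for the set $\|E^\pm(\psi)\|^i_{\mathcal{M}^i}$ out of an old world $v\geq^i w$ lands in a triple $(v,(\|\psi\|^+_{\mathcal{M}},Y_0),v')$ with $R_{(\|\psi\|^+_{\mathcal{M}},Y_0)}(v,v')$ in $\mathcal{M}$ (the restricted IH for $\psi$ forcing the first entry of the middle coordinate to be $\|\psi\|^+_{\mathcal{M}}$), and then any $R^i$-step for $\|E^\pm(\sim\psi)\|^i_{\mathcal{M}^i}$ out of that triple --- or out of any $\leq^i$-successor of it, which retains the same middle coordinate --- simultaneously forces $Y_0=\|\psi\|^-_{\mathcal{M}}$ (by the restricted IH for $\sim\psi$) and lands back at an old world. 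Triples with $Y_0\neq\|\psi\|^-_{\mathcal{M}}$ contribute nothing, the inner conditional being vacuously satisfied there. Collapsing the two $R^i$-steps into the single $\mathsf{N4CK}$-step $R_{\|\psi\|_{\mathcal{M}}}$ and absorbing the $\leq$-prefix of the $\boxto$-clause using transitivity of $\leq$ together with reflexivity where needed, one recovers exactly the verification clause for $\psi\boxto\chi$, with $E^\pm(\chi)$ evaluated at an old world so that the IH applies; the case $\phi=\sim(\psi\boxto\chi)$ is the dual, run through the $\diamondto$-clauses together with the $\mathsf{N4}$-clause for $\models^-$ of a conditional. Throughout, one must keep careful track of which worlds of $\mathcal{M}^i$ are old and which are triples, since the biconditional of Part~2 is asserted --- and needed --- only at old worlds.
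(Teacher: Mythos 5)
Your proposal is correct and follows essentially the same route as the paper's own proof: the same case split on whether the source of the given $R^i$-step is an old world or a triple for conditions \eqref{Cond:1i} and \eqref{Cond:2i} in Part 1, and the same induction for Part 2, including the crucial point that in the $\boxto$ and $\sim(\cdot\boxto\cdot)$ cases the first $R^i$-step only fixes the first component of the middle coordinate, while the second component $Y$ is pinned down to $\|\psi\|^-_{\mathcal{M}}$ only by the second $R^i$-step via the induction hypothesis for $\sim\psi$. One small correction to Part 1: in the \eqref{Cond:2i} subcase whose source is an old world, invoking \eqref{Cond:2} of $\mathcal{M}$ is neither needed nor quite right (it would produce a fresh triple rather than reach the given target $\gamma$); since $\gamma$ is forced to be a triple with the same middle coordinate and first component $w'\geq w$, the completing pair $w\leq^i w'\mathrel{R^i_{X'}}\gamma$ is read off directly from the definitions, exactly as you do in the \eqref{Cond:1i} triple-source subcase.
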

\begin{proof}
	(Part 1) The only non-trivial part is the satisfaction of conditions \eqref{Cond:1i} and \eqref{Cond:2i} by $\mathcal{M}^i$. We reason as follows:
	
	\textit{Condition} \eqref{Cond:1i}. Let $\alpha, \beta, \gamma \in W^i$ and let $\Xi\subseteq W^i$ be such that $\alpha\geq^i\beta\mathrel{R^i_\Xi}\gamma$. Then the following cases are possible:
	
	\textit{Case 1}. For some $w, v \in W$ and some $X, Y \subseteq W$, we have $\beta = w$, $\gamma = (w, (X,Y), v) \in R$, and $X = \Xi \cap W$. But then, since $w = \beta\leq^i\alpha$, we must also have $\alpha = u$ for some $u \in W$ and $w \leq u$. By condition \eqref{Cond:1} for $\mathcal{M}$, choose a $u' \in W$ such that both $u'\geq v$ and $(u, (X, Y), u') \in R \subseteq W^i$. By definition of $\mathcal{M}^i$, we have then both $(w, (X,Y), v)\leq^i(u, (X, Y), u')$ and $R^i_\Xi(u, (u, (X, Y), u'))$.
	
	\textit{Case 2}. For some $w, v \in W$ and some $X, Y \subseteq W$, we have $\beta = (w, (X,Y), v) \in R$, $\gamma = v$, and $Y = \Xi \cap W$. But then, since $(w, (X,Y), v) = \beta\leq^i\alpha$, we must also have $\alpha = (w', (X,Y), v')\in R$ for some $w',v' \in W$ such that both $w \leq w'$ and $v \leq v'$. By definition of $\mathcal{M}^i$, we will have then both $v\leq^iv'$ and $R^i_\Xi((w', (X, Y), v'), v')$.
	
	\textit{Condition} \eqref{Cond:2i}. Let $\alpha, \beta, \gamma \in W^i$ and let $\Xi\subseteq W^i$ be such that $\alpha\mathrel{R^i_\Xi}\beta\leq^i\gamma$. Then the following cases are possible:
	
	\textit{Case 1}. For some $w, v \in W$ and some $X, Y \subseteq W$, we have $\alpha = w$, $\beta = (w, (X,Y), v) \in R$, and $X = \Xi \cap W$. But then, since $(w, (X,Y), v) = \beta\leq^i\gamma$, we must also have $\gamma = (w', (X,Y), v')\in R$ for some $w',v' \in W$ such that both $w \leq w'$ and $v \leq v'$. By definition of $\mathcal{M}^i$, we will have then both $w\leq^iw'$ and $R^i_\Xi(w',(w', (X, Y), v'))$.
	
	\textit{Case 2}. For some $w, v \in W$ and some $X, Y \subseteq W$, we have $\alpha = (w, (X,Y), v) \in R$, $\beta = v$, and $Y = \Xi \cap W$. But then, since $v = \beta\leq^i\gamma$, we must also have $\gamma = u$ for some $u \in W$ and $w \leq u$. By condition \eqref{Cond:2} for $\mathcal{M}$, choose a $u' \in W$ such that both $u'\geq w$ and $(u', (X, Y), u) \in R \subseteq W^i$. By definition of $\mathcal{M}^i$, we have then both $(w, (X,Y), v)\leq^i(u', (X, Y), u)$ and $R^i_\Xi((u', (X, Y), u), u)$.
	
	(Part 2) We proceed by induction on the construction of $\phi \in \mathcal{L}_{\boxto}$. The basis and the induction step for $\sim$, $\wedge$, and $\vee$ are straightforward. We consider the remaining cases.
	
	\textit{Case 1}. $\phi = (\psi \to \chi)$. If now $w \in W$ then $\mathcal{M}, w \models^+ \phi$ iff $(\forall v \geq w)(\mathcal{M}, v \models^+ \psi\text{ implies }\mathcal{M}, v \models^+ \chi)$, iff, by definition of $\mathcal{M}^i$, $(\forall v \geq^i w)(\mathcal{M}, v \models^+ \psi\text{ implies }\mathcal{M}, v \models^+ \chi)$, iff, by IH, $(\forall v \geq^i w)(\mathcal{M}^i, v \models^i E^\pm(\psi)\text{ implies }\mathcal{M}^i, v \models^i E^\pm(\chi))$; the latter is clearly equivalent to $\mathcal{M}^i, w \models^i E^\pm(\phi)$.
	
	\textit{Case 2}. $\phi = \sim(\psi \to \chi)$. If now $w \in W$ then $\mathcal{M}, w \models^+ \phi$ iff $(\mathcal{M}, v \models^+ \psi\text{ and }\mathcal{M}, v \models^+ \sim\chi)$, iff, by IH, $(\mathcal{M}^i, w \models^i E^\pm(\psi)\text{ and }\mathcal{M}^i, w \models^i E^\pm(\sim\chi))$ iff $\mathcal{M}^i, w \models^i E^\pm(\phi)$.
	
	\textit{Case 3}. $\phi = (\psi \boxto \chi)$. Let $w \in W$ be arbitrary. ($\Leftarrow$) If $\mathcal{M}, w \not\models^+ \phi$, then there must be some $v, u \in W$ such that $w\leq v\mathrel{R_{\|\psi\|_\mathcal{M}}}u$ and $\mathcal{M}, u \not\models^+ \chi$. Thus, in particular, $(v, \|\psi\|_\mathcal{M}, u)\in R$. By IH, we know that $\mathcal{M}^i, u \not\models^i E^\pm(\chi)$, and, moreover, that both $\|E^\pm(\psi)\|^i_{\mathcal{M}^i} \cap W = \|\psi\|^+_\mathcal{M}$ and $\|E^\pm(\sim\psi)\|^i_{\mathcal{M}^i} \cap W = \|\sim\psi\|^+_\mathcal{M} = \|\psi\|^-_\mathcal{M}$. By definition of $\mathcal{M}^i$, we know that $w, v, u, (v, \|\psi\|_\mathcal{M}, u) \in W^i$, that $w\leq^i v$, that $R^i_{\|E^\pm(\psi)\|^i_{\mathcal{M}^i}}(v, (v, \|\psi\|_\mathcal{M}, u))$, that $(v, \|\psi\|_\mathcal{M}, u)\leq^i (v, \|\psi\|_\mathcal{M}, u)$, and that $R^i_{\|E^\pm(\sim\psi)\|^i_{\mathcal{M}^i}}((v, \|\psi\|_\mathcal{M}, u), u)$. These facts allow us to conclude that we have both $
	\mathcal{M}^i, (v,\|\psi\|_\mathcal{M}, u)\not\models^i E^\pm(\sim\psi)\boxto E^\pm(\chi)$ and $
	\mathcal{M}^i, w\not\models^i (E^\pm(\psi)\boxto (E^\pm(\sim\psi)\boxto E^\pm(\chi))) = E^\pm(\phi)$. 	
%	\textit{Case 3}. $\phi = (\psi \boxto \chi)$. Let $w \in W$ be arbitrary. ($\Leftarrow$) If $\mathcal{M}, w \not\models^+ \phi$, then there must be some $v, u \in W$ such that $w\leq v\mathrel{R_{\|\psi\|_\mathcal{M}}}u$ and $\mathcal{M}, u \not\models^+ \chi$. Thus, in particular, $(v, ( \|\psi\|^+_\mathcal{M},  \|\psi\|^-_\mathcal{M}), u)\in R$. By IH, we know that $\mathcal{M}^i, u \not\models^i E^\pm(\chi)$, and, moreover, that both $\|E^\pm(\psi)\|^i_{\mathcal{M}^i} \cap W = \|\psi\|^+_\mathcal{M}$ and $\|E^\pm(\sim\psi)\|^i_{\mathcal{M}^i} \cap W = \|\sim\psi\|^+_\mathcal{M} = \|\psi\|^-_\mathcal{M}$. By definition of $\mathcal{M}^i$, we know that $w, v, u, (v, ( \|\psi\|^+_\mathcal{M},  \|\psi\|^-_\mathcal{M}), u) \in W^i$, that $w\leq^i v$, that $R^i_{\|E^\pm(\psi)\|^i_{\mathcal{M}^i}}(v, (v, ( \|\psi\|^+_\mathcal{M},  \|\psi\|^-_\mathcal{M}), u))$, that $(v, ( \|\psi\|^+_\mathcal{M},  \|\psi\|^-_\mathcal{M}), u)\leq^i (v, ( \|\psi\|^+_\mathcal{M},  \|\psi\|^-_\mathcal{M}), u)$, and that $R^i_{\|E^\pm(\sim\psi)\|^i_{\mathcal{M}^i}}((v, ( \|\psi\|^+_\mathcal{M},  \|\psi\|^-_\mathcal{M}), u), u)$. These facts allow us to conclude that, first, we have
%	$$
%	\mathcal{M}^i, (v,(\|\psi\|^+_\mathcal{M},  \|\psi\|^-_\mathcal{M}), u)\not\models^i E^\pm(\sim\psi)\boxto E^\pm(\chi),
%	$$
	
	($\Rightarrow$). If $\mathcal{M}^i, w\not\models^i E^\pm(\phi) = (E^\pm(\psi)\boxto (E^\pm(\sim\psi)\boxto E^\pm(\chi)))$, then there must be $\alpha, \beta, \gamma, \delta \in W^i$ such that we have $w\leq^i\alpha\mathrel{R^i_{\|E^\pm(\psi)\|^i_{\mathcal{M}^i}}}\beta\leq^i\gamma{R^i_{\|E^\pm(\sim\psi)\|^i_{\mathcal{M}^i}}}\delta$ and also $\mathcal{M}^i,\delta\not\models^i E^\pm(\chi)$. Since $W\ni w\leq^i\alpha$, we know, by definition of $\mathcal{M}^i$, that $\alpha = v$ for some $W \ni v \geq w$. Next, IH implies that $\|E^\pm(\psi)\|^i_{\mathcal{M}^i} \cap W = \|\psi\|^+_\mathcal{M}$, therefore, by definition of $R^i$, we have $\beta = (v, (\|\psi\|^+_\mathcal{M}, Y), u) \in R$ for some $u \in W$ and some $Y \subseteq W$. The definition of $\leq^i$ now implies that we must have $\gamma = (v', (\|\psi\|^+_\mathcal{M}, Y), u') \in R$ for some $v', u' \in W$ such that both $v \leq v'$ and $u\leq u'$. Since, therefore, we must have $\gamma = (v', (\|\psi\|^+_\mathcal{M}, Y), u')R^i_{\|E^\pm(\sim\psi)\|^i_{\mathcal{M}^i}}\delta$, the definition of $R^i$, together with IH implies now that $Y = \|E^\pm(\sim\psi)\|^i_{\mathcal{M}^i} \cap W = \|\sim\psi\|^+_\mathcal{M} = \|\psi\|^-_\mathcal{M}$ and that $\delta = u'$. We must have, therefore, all of the following:
	$$
	\alpha = v\geq w,\,\beta = (v, \|\psi\|_\mathcal{M}, u) \in R,\,v \leq v',\,u\leq u',\,\gamma = (v', \|\psi\|_\mathcal{M}, u')\in R,\,\delta = u'.
	$$
%	\begin{align}
%		\alpha &= v\geq w\label{E:x1}\\
%		\beta &= (v, \|\psi\|_\mathcal{M}, u) \in R\label{E:x2}\\
%		v \leq v',&\,u\leq u'\label{E:x3}\\
%		\gamma &= (v', \|\psi\|_\mathcal{M}, u') \in R\label{E:x4}\\
%		\delta &= u'
%	\end{align}
By transitivity of $\leq$, we also get that $w \leq v'$. Looking at these facts from the standpoint of $\mathcal{M}$, we obtain the following diagram:
\begin{diagram}
	& & v'&  \rTo_{R_{\|\psi\|_\mathcal{M}}}& u' = \delta &\\
	 &\ruDotsto_\leq  & \uTo_\leq &             & \uTo_\leq &\\
	w & \rTo_{\leq} & v = \alpha &\rTo_{R_{\|\psi\|_\mathcal{M}}}  & u & 
\end{diagram}
Moreover, the fact that $\mathcal{M}^i,\delta = u'\not\models^i E^\pm(\chi)$ implies, by IH, that $\mathcal{M},u'\not\models^+ \chi$ so that $\mathcal{M}, w \not\models^+ (\psi\boxto \chi) = \phi$ clearly follows.

\textit{Case 4}. $\phi = \sim(\psi \boxto \chi)$. Let $w \in W$ be arbitrary. ($\Rightarrow$) If $\mathcal{M}, w \models^+ \phi$, then  $\mathcal{M}, w \models^- \psi \boxto \chi$, and there must be some $v \in W$ such that $w\mathrel{R_{\|\psi\|_\mathcal{M}}}v$ and $\mathcal{M}, v \models^- \chi$, or, equivalently, $\mathcal{M}, v \models^+ \sim\chi$. Thus, in particular, $(w, ( \|\psi\|^+_\mathcal{M},  \|\psi\|^-_\mathcal{M}), v)\in R$. By IH, we know that $\mathcal{M}^i, v \models^i E^\pm(\sim\chi)$, and, moreover, that both $\|E^\pm(\psi)\|^i_{\mathcal{M}^i} \cap W = \|\psi\|^+_\mathcal{M}$ and $\|E^\pm(\sim\psi)\|^i_{\mathcal{M}^i} \cap W = \|\sim\psi\|^+_\mathcal{M} = \|\psi\|^-_\mathcal{M}$. By definition of $\mathcal{M}^i$, we know that $w, v, (w, ( \|\psi\|^+_\mathcal{M},  \|\psi\|^-_\mathcal{M}), v) \in W^i$, that $R^i_{\|E^\pm(\psi)\|^i_{\mathcal{M}^i}}(w, (w, ( \|\psi\|^+_\mathcal{M},  \|\psi\|^-_\mathcal{M}), v))$, and that $R^i_{\|E^\pm(\sim\psi)\|^i_{\mathcal{M}^i}}((w, ( \|\psi\|^+_\mathcal{M},  \|\psi\|^-_\mathcal{M}), v), v)$. These facts allow us to conclude that, first, we have $
\mathcal{M}^i, (w,(\|\psi\|^+_\mathcal{M},  \|\psi\|^-_\mathcal{M}), v)\models^i E^\pm(\sim\psi)\diamondto E^\pm(\sim\chi)$,
and, second, that $
\mathcal{M}^i, w\models^i (E^\pm(\psi)\diamondto (E^\pm(\sim\psi)\diamondto E^\pm(\sim\chi))) = E^\pm(\phi)$.

($\Leftarrow$). If $\mathcal{M}^i, w\models^i E^\pm(\phi) = (E^\pm(\psi)\diamondto (E^\pm(\sim\psi)\diamondto E^\pm(\sim\chi)))$, then there must be $\alpha, \beta \in W^i$ such that we have $w\mathrel{R^i_{\|E^\pm(\psi)\|^i_{\mathcal{M}^i}}}\alpha\mathrel{R^i_{\|E^\pm(\sim\psi)\|^i_{\mathcal{M}^i}}}\beta$ and also $\mathcal{M}^i,\beta\models^i E^\pm(\sim\chi)$. Now, IH implies that $\|E^\pm(\psi)\|^i_{\mathcal{M}^i} \cap W = \|\psi\|^+_\mathcal{M}$, therefore, by definition of $R^i$, we must have $\alpha = (w, (\|\psi\|^+_\mathcal{M}, Y), v) \in R$ for some $v \in W$ and some $Y \subseteq W$. In other words, $\alpha = (w, (\|\psi\|^+_\mathcal{M}, Y), v)R^i_{\|E^\pm(\sim\psi)\|^i_{\mathcal{M}^i}}\beta$, whence, by the definition of $R^i$ together with IH, we know that $Y = \|E^\pm(\sim\psi)\|^i_{\mathcal{M}^i} \cap W = \|\sim\psi\|^+_\mathcal{M} = \|\psi\|^-_\mathcal{M}$ and that $\beta = v$. Summing up, we must have
	$\alpha = (w, \|\psi\|_\mathcal{M}, v) \in R$ and $\beta = v$,
which implies, in particular, that $w\mathrel{R_{\|\psi\|_\mathcal{M}}}v$ Moreover, the fact that $\mathcal{M}^i,\beta = v\models^i E^\pm(\sim\chi)$ implies, by IH, that $\mathcal{M},v\models^+ \sim\chi$, or, equivalently, that $\mathcal{M},v\models^- \chi$ so that we clearly have $\mathcal{M}, w \models^- (\psi\boxto \chi)$ and thus also $\mathcal{M}, w \models^+ \sim(\psi\boxto \chi) = \phi$, as desired.
 \end{proof}
\begin{lemma}\label{L:claim2}
	Assume that $\mathcal{M} = (W, \leq, R, V)\in CInt^e$. Then let the model $\mathcal{M}^{n4} = (W, \leq, R^{n4}, V^+, V^-)$ be defined as follows:
	\begin{itemize}
		\item $R^{n4}:= \{(w, (X,Y), v)\mid \exists u((w, X, u)\in R\,\&\,(u, Y, v)\in R)\}$.
		\item $V^+(p_i): = V(p_i)$ and $V^-(p_i) := V(q_i)$ for every $i \in \omega$. 
	\end{itemize} 
	Then the following statements are true:
	\begin{enumerate}
		\item $\mathcal{M}^{n4}\in CNel$.
		
		\item For every $w \in W$ and every $\phi \in \mathcal{L}_{\boxto}$, we have $\mathcal{M}^{n4}, w \models^+ \phi$ iff $\mathcal{M}, w \models^i E^\pm(\phi)$.
	\end{enumerate}
\end{lemma}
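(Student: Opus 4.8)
The plan is to mirror the proof of Lemma~\ref{L:claim1}, only running the construction in the opposite direction: from the extended conditional intuitionistic model $\mathcal{M}$ one reads off a conditional Nelsonian model by composing $R$ with itself — one leg along the antecedent's verification set, one leg along its falsification set — and by splitting off the valuation of the $q_i$'s as the anti-extensions. Part~1 is then a routine frame-condition check, and Part~2 is an induction on $\phi$ running exactly as the induction for Claim~2 in the proof of Proposition~\ref{P:prop-embedding}, with two new conditional cases.

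For Part~1, note first that $(W,\leq,V^+,V^-)\in Nel$: since $(W,\leq,V)\in Int^e$, the map $V$ satisfies \eqref{Cond:mon} on all of $Var\cup Var'$, hence so do $V^+$ and $V^-$; and $R^{n4}\subseteq W\times(\mathcal{P}(W)\times\mathcal{P}(W))\times W$ is immediate from the definition. The substance is \eqref{Cond:1} and \eqref{Cond:2} for $R^{n4}$, and the idea is simply to apply the corresponding intuitionistic conditions twice. For \eqref{Cond:1}: if $w'\geq w\mathrel{R^{n4}_{(X,Y)}}v$, fix $u$ with $w\mathrel{R_X}u\mathrel{R_Y}v$; applying \eqref{Cond:1i} to $w'\geq w\mathrel{R_X}u$ gives $u'\geq u$ with $w'\mathrel{R_X}u'$, and applying \eqref{Cond:1i} again to $u'\geq u\mathrel{R_Y}v$ gives $v'\geq v$ with $u'\mathrel{R_Y}v'$; then $w'\mathrel{R^{n4}_{(X,Y)}}v'\geq v$. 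Condition \eqref{Cond:2} is dual: from $w\mathrel{R^{n4}_{(X,Y)}}v\leq v'$, witnessed by $u$ with $w\mathrel{R_X}u\mathrel{R_Y}v$, apply \eqref{Cond:2i} to $u\mathrel{R_Y}v\leq v'$ to get $u'\geq u$ with $u'\mathrel{R_Y}v'$, and then to $w\mathrel{R_X}u\leq u'$ to get $w'\geq w$ with $w'\mathrel{R_X}u'$; so $w\leq w'\mathrel{R^{n4}_{(X,Y)}}v'$.

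For Part~2 I would prove $\mathcal{M}^{n4},w\models^+\phi$ iff $\mathcal{M},w\models^i E^\pm(\phi)$ by induction on $\phi\in\mathcal{L}_{\boxto}$; since $\sim\phi\in\mathcal{L}_{\boxto}$, applying the statement to $\sim\phi$ yields $\mathcal{M}^{n4},w\models^-\phi$ iff $\mathcal{M},w\models^i E^\pm(\sim\phi)$, which is what the falsification clauses for $\to$ and $\boxto$ will consume. The basis and the steps for $\wedge$, $\vee$, $\to$, and the $\sim$-prefixed cases are routine: they use only that $\leq$ is literally the same relation in $\mathcal{M}^{n4}$ and $\mathcal{M}$ (so no monotonicity fuss intervenes), that the $\mathsf{N4}$ falsification clauses push $\sim$ through the connectives exactly the way the clauses of $E^\pm$ do (e.g.\ $\mathcal{M}^{n4},w\models^+\sim(\psi\to\chi)$ iff $\mathcal{M}^{n4},w\models^+\psi$ and $\mathcal{M}^{n4},w\models^-\chi$, matching $E^\pm(\sim(\psi\to\chi))=E^\pm(\psi)\wedge E^\pm(\sim\chi)$), and the IH. A by-product of the IH, recorded for use in the conditional cases, is that $\|E^\pm(\psi)\|^i_{\mathcal{M}}=\|\psi\|^+_{\mathcal{M}^{n4}}$ and $\|E^\pm(\sim\psi)\|^i_{\mathcal{M}}=\|\psi\|^-_{\mathcal{M}^{n4}}$ as subsets of $W$.

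The two new cases are $\phi=\sim(\psi\boxto\chi)$ and $\phi=\psi\boxto\chi$. The first is easy: with $E^\pm(\sim(\psi\boxto\chi))=E^\pm(\psi)\diamondto(E^\pm(\sim\psi)\diamondto E^\pm(\sim\chi))$, unfolding $\models^i$ for the nested $\diamondto$ at $w$ asks for $t,u$ with $w\mathrel{R_{\|E^\pm(\psi)\|^i}}t\mathrel{R_{\|E^\pm(\sim\psi)\|^i}}u$ and $\mathcal{M},u\models^i E^\pm(\sim\chi)$, which by the truth-set identities, the definition of $R^{n4}$, and the IH is precisely $\mathcal{M}^{n4},w\models^-\psi\boxto\chi$ — no intervening $\leq$-steps, so the two sides coincide at once. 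The main obstacle is $\phi=\psi\boxto\chi$, with $E^\pm(\phi)=E^\pm(\psi)\boxto(E^\pm(\sim\psi)\boxto E^\pm(\chi))$: unfolding the outer and then the inner $\boxto$ on the intuitionistic side produces, for $v\geq w$, a configuration $v\mathrel{R_{\|\psi\|^+}}t$, $t\leq t'$, $t'\mathrel{R_{\|\psi\|^-}}u$, whereas a single $R^{n4}_{\|\psi\|_{\mathcal{M}^{n4}}}$-step out of $v$ composes the two $R$-legs with \emph{no} intermediate $\leq$. From the $\mathsf{IntCK}^{e+}$-side to the $\mathsf{N4CK}$-side this mismatch is harmless: take $t'=t$ and use reflexivity of $\leq$. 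The converse direction is where the work sits — given such a configuration, apply \eqref{Cond:2i} to $v\mathrel{R_{\|\psi\|^+}}t\leq t'$ to obtain $v'\geq v$ with $v'\mathrel{R_{\|\psi\|^+}}t'$; then $v'\geq w$ and $v'\mathrel{R^{n4}_{\|\psi\|_{\mathcal{M}^{n4}}}}u$, so $\mathcal{M}^{n4},w\models^+\psi\boxto\chi$ yields $\mathcal{M}^{n4},u\models^+\chi$, hence $\mathcal{M},u\models^i E^\pm(\chi)$ by IH. This closes the induction and, with Part~1, the lemma.
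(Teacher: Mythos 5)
Your proposal is correct and follows essentially the same route as the paper's proof: the frame conditions \eqref{Cond:1} and \eqref{Cond:2} are verified by applying \eqref{Cond:1i} resp. \eqref{Cond:2i} once per $R$-leg of $R^{n4}$, and the induction handles the $\boxto$-case by using reflexivity of $\leq$ in one direction and \eqref{Cond:2i} (to slide the first $R$-leg up past the intermediate $\leq$-step) in the other, with the $\sim(\psi\boxto\chi)$-case collapsing to a direct chain of equivalences. The only difference is presentational (you argue the $\boxto$-case directly where the paper argues contrapositively), which does not affect the substance.
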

\begin{proof}
	(Part 1). Again, the only non-trivial part is the satisfaction of conditions \eqref{Cond:1} and \eqref{Cond:2} by $\mathcal{M}^{n4}$. We reason as follows:
	
	\textit{Condition} \eqref{Cond:1}. Let $w, v, u \in W$ and $X, Y\subseteq W$ be such that $w\geq v\mathrel{R^{n4}_{(X,Y)}}u$. Then, for some $u' \in W$ we must have both $v\mathrel{R_{X}}u'$ and $u'\mathrel{R_{Y}}u$. By condition \eqref{Cond:1i} for $\mathcal{M}$, there must be a $w_0 \in W$ such that $w\mathrel{R_{X}}w_0\geq u'$. Applying the same condition one more time, we see that there must also exist a $w_1 \in W$ such that $w_0\mathrel{R_{Y}}w_1\geq u$. But then the definition of $R^{n4}$ implies that we must have $w\mathrel{R^{n4}_{(X,Y)}}w_1\geq u$. 
	
	\textit{Condition} \eqref{Cond:2}. Let $w, v, u \in W$ and $X, Y\subseteq W$ be such that $w\mathrel{R^{n4}_{(X,Y)}}v\leq u$. Then, for some $v' \in W$ we must have both $w\mathrel{R_{X}}v'$ and $v'\mathrel{R_{Y}}v$. By condition \eqref{Cond:2i} for $\mathcal{M}$, there must be a $w_1 \in W$ such that $v'\leq w_1\mathrel{R_{Y}}u$. Applying the same condition one more time, we see that there must also exist a $w_0 \in W$ such that $w\leq w_0\mathrel{R_{X}}w_1$. But then the definition of $R^{n4}$ implies that we must have $w\leq w_0\mathrel{R^{n4}_{(X,Y)}}u$. 
	
	(Part 2). We proceed by induction on the construction of $\phi \in \mathcal{L}_{\boxto}$. The basis and the induction step for $\sim$, $\wedge$, $\vee$, and $\to$ are straightforward. We consider the remaining cases.

	\textit{Case 1}. $\phi = (\psi \boxto \chi)$. Let $w \in W$ be arbitrary. ($\Leftarrow$) If $\mathcal{M}, w \not\models^+ \phi$, then there must be some $v, u \in W$ such that $w\leq v\mathrel{R^{n4}_{\|\psi\|_{\mathcal{M}^{n4}}}}u$ and $\mathcal{M}^{n4}, u \not\models^+ \chi$. By definition of $R^{n4}$, there must be a $u' \in W$ such that both $v\mathrel{R_{\|\psi\|^+_{\mathcal{M}^{n4}}}}u'$ and $u'\mathrel{R_{\|\psi\|^-_{\mathcal{M}^{n4}}}}u$. By IH, we know that $\mathcal{M}, u \not\models^i E^\pm(\chi)$, and, moreover, that both $\|E^\pm(\psi)\|^i_{\mathcal{M}}  = \|\psi\|^+_{\mathcal{M}^{n4}}$ and $\|E^\pm(\sim\psi)\|^i_{\mathcal{M}} = \|\sim\psi\|^+_{\mathcal{M}^{n4}} = \|\psi\|^-_{\mathcal{M}^{n4}}$. Finally, note that we have $u' \leq u'$ by reflexivity. These facts allow us to conclude that both $
	\mathcal{M}, u'\not\models^i E^\pm(\sim\psi)\boxto E^\pm(\chi)$ and $
	\mathcal{M}, w\not\models^i (E^\pm(\psi)\boxto (E^\pm(\sim\psi)\boxto E^\pm(\chi))) = E^\pm(\phi)$.
	
	($\Rightarrow$). If $\mathcal{M}, w\not\models^i E^\pm(\phi) = (E^\pm(\psi)\boxto (E^\pm(\sim\psi)\boxto E^\pm(\chi)))$, then there must be $v, u \in W$ such that we have both $w\leq v\mathrel{R_{\|E^\pm(\psi)\|^i_{\mathcal{M}}}}u$ and $\mathcal{M}, u\not\models^i E^\pm(\sim\psi)\boxto E^\pm(\chi)$; the latter further implies that there must be some $v', u' \in W$ such that both $u\leq v'\mathrel{R_{\|E^\pm(\sim\psi)\|^i_{\mathcal{M}}}}u'$ and $\mathcal{M}, u'\not\models^i E^\pm(\chi)$. Now, IH implies that $\|E^\pm(\psi)\|^i_{\mathcal{M}}  = \|\psi\|^+_{\mathcal{M}^{n4}}$, that $\|E^\pm(\sim\psi)\|^i_{\mathcal{M}} = \|\sim\psi\|^+_{\mathcal{M}^{n4}} = \|\psi\|^-_{\mathcal{M}^{n4}}$, and that also $\mathcal{M}^{n4}, u'\not\models^+\chi$. It follows that $w\leq v\mathrel{R_{\|\psi\|^+_{\mathcal{M}^{n4}}}}u$ and $u\leq v'\mathrel{R_{\|\psi\|^-_{\mathcal{M}^{n4}}}}u'$.
Next, by condition \eqref{Cond:2i} we choose a $w' \in W$ such that $v\leq w'\mathrel{R_{\|\psi\|^+_{\mathcal{M}^{n4}}}}v'$. The whole situation is then represented in the following diagram:
\begin{diagram}
	 w' & \rDotsto_{R_{\|\psi\|^+_{\mathcal{M}^{n4}}}}                                        &  v'       & \rTo_{R_{\|\psi\|^-_{\mathcal{M}^{n4}}}} & u'&\\
	\uDotsto_\leq  &                                         & \uTo_\leq &\\
 v&  \rTo_{R_{\|\psi\|^+_{\mathcal{M}^{n4}}}}& u        &\\
	\uTo_\leq  &  &             & &\\
	w &  & & & & 
\end{diagram}
By transitivity of $\leq$ and the definition of $R^{n4}$, we get that $w\leq w'\mathrel{R^{n4}_{\|\psi\|_{\mathcal{M}^{n4}}}}u'$, which, together with $\mathcal{M}^{n4}, u'\not\models^+\chi$, implies that $\mathcal{M}, w \not\models^+ (\psi\boxto\chi) = \phi$.
		
	\textit{Case 2}. $\phi = \sim(\psi \boxto \chi)$. Let $w \in W$ be arbitrary. Then $\mathcal{M}^{n4}, w \models^+ \phi$ iff  $\mathcal{M}, w \models^- \psi \boxto \chi$ iff for some $v \in W$ we have $w\mathrel{R^{n4}_{\|\psi\|_{\mathcal{M}^{n4}}}}v$ and $\mathcal{M}^{n4}, v \models^- \chi$, iff for some $v \in W$ we have $w\mathrel{R^{n4}_{\|\psi\|_{\mathcal{M}^{n4}}}}v$ and $\mathcal{M}^{n4}, v \models^+ \sim\chi$, iff, for some $v, u \in W$ we have $w\mathrel{R_{\|\psi\|^+_{\mathcal{M}^{n4}}}}u$, $u\mathrel{R_{\|\psi\|^-_{\mathcal{M}^{n4}}}}u$ and $\mathcal{M}^{n4}, v \models^+ \sim\chi$. By IH, the latter holds iff for some $u, v \in W$ we have $w\mathrel{R_{\|E^\pm(\psi)\|^i_{\mathcal{M}}}}u$, $u\mathrel{R_{\|E^\pm(\sim\psi)\|^i_{\mathcal{M}}}}v$ and $\mathcal{M}, v \models^iE^\pm(\sim\chi)$. But the latter is equivalent to $
	\mathcal{M}, w\models^i (E^\pm(\psi)\diamondto (E^\pm(\sim\psi)\diamondto E^\pm(\sim\chi))) = E^\pm(\phi)$.
\end{proof}
We are now ready to state and prove the faithfulness of $E^\pm$:
\begin{proposition}\label{R:Em}
	Let $\Gamma, \Delta \subseteq \mathcal{L}_{\boxto}$. Then $(\Gamma, \Delta)\in \mathsf{N4CK}$ iff $(E^\pm(\Gamma), E^\pm(\Delta))\in \mathsf{IntCK}^{e+}$.
\end{proposition}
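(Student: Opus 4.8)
The plan is to mimic the structure of the proof of Proposition~\ref{P:prop-embedding}: the two model-transfer lemmas \ref{L:claim1} and \ref{L:claim2} already do all the heavy lifting, so what remains is to translate countermodels back and forth. The starting point is the definition of a semantically presented logic: $(\Gamma,\Delta)\notin\mathsf{N4CK}$ precisely when there is a pointed model $(\mathcal{M},w)$ with $\mathcal{M}\in CNel$ such that $\mathcal{M},w\models^+\phi$ for every $\phi\in\Gamma$ and $\mathcal{M},w\not\models^+\psi$ for every $\psi\in\Delta$, and symmetrically $(E^\pm(\Gamma),E^\pm(\Delta))\notin\mathsf{IntCK}^{e+}$ precisely when there is $(\mathcal{N},w)$ with $\mathcal{N}\in CInt^e$, $\mathcal{N},w\models^i E^\pm(\phi)$ for all $\phi\in\Gamma$, and $\mathcal{N},w\not\models^i E^\pm(\psi)$ for all $\psi\in\Delta$. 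Hence it suffices to prove the contrapositive in each direction.

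For the implication from right to left in the displayed biconditional I would argue as follows. Suppose $(\Gamma,\Delta)\notin\mathsf{N4CK}$ and fix a witnessing $\mathcal{M}\in CNel$ and $w\in W$. Form the extended conditional intuitionistic model $\mathcal{M}^i$ of Lemma~\ref{L:claim1}. Part~1 of that lemma tells us $\mathcal{M}^i\in CInt^e$, and Part~2 gives, for every $\phi\in\mathcal{L}_{\boxto}$ and in particular for every $\phi\in\Gamma\cup\Delta$, that $\mathcal{M},w\models^+\phi$ iff $\mathcal{M}^i,w\models^i E^\pm(\phi)$. Therefore $\mathcal{M}^i,w\models^i E^\pm(\phi)$ for all $\phi\in\Gamma$ and $\mathcal{M}^i,w\not\models^i E^\pm(\psi)$ for all $\psi\in\Delta$, so $(E^\pm(\Gamma),E^\pm(\Delta))\notin\mathsf{IntCK}^{e+}$.

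The opposite direction is entirely parallel, using Lemma~\ref{L:claim2} in place of Lemma~\ref{L:claim1}: given a witnessing $\mathcal{N}\in CInt^e$ and $w$ for $(E^\pm(\Gamma),E^\pm(\Delta))\notin\mathsf{IntCK}^{e+}$, the model $\mathcal{N}^{n4}$ lies in $CNel$ by Part~1, and Part~2 yields $\mathcal{N}^{n4},w\models^+\phi$ iff $\mathcal{N},w\models^i E^\pm(\phi)$ for all relevant $\phi$, so $(\mathcal{N}^{n4},w)$ witnesses $(\Gamma,\Delta)\notin\mathsf{N4CK}$. Combining the two contrapositives establishes the stated equivalence; note that the corner case $\Delta=\emptyset$ needs no separate treatment, since then a countermodel is just a pointed model verifying all of $\Gamma$, and such a model exists on either side (cf.\ Proposition~\ref{P:satisfiable}).

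I do not expect a genuine obstacle at this stage: the real difficulty has been front-loaded into Lemmas~\ref{L:claim1} and~\ref{L:claim2}, where the delicate points are (a) verifying the incestual conditions — \eqref{Cond:1i}/\eqref{Cond:2i} for $\mathcal{M}^i$ and \eqref{Cond:1}/\eqref{Cond:2} for $\mathcal{N}^{n4}$ — and (b) the $\boxto$- and $\sim\boxto$-cases of the inductions, where the auxiliary ``edge worlds'' $(w,(X,Y),v)$ of $\mathcal{M}^i$ (resp.\ the composed relation $R^{n4}$) must be made to line up with the nested translations $E^\pm(\phi)\boxto(E^\pm(\sim\phi)\boxto E^\pm(\psi))$ and $E^\pm(\phi)\diamondto(E^\pm(\sim\phi)\diamondto E^\pm(\sim\psi))$. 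The only thing to be careful about in writing Proposition~\ref{R:Em} itself is to phrase the argument uniformly in terms of (non-)satisfiability so that both lemmas apply to every formula of $\Gamma\cup\Delta$ at once.
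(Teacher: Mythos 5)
Your proof is correct and coincides with the paper's own argument: Proposition \ref{R:Em} is obtained there in exactly this way, by transferring a countermodel $(\mathcal{M},w)$ for $(\Gamma,\Delta)$ to $(\mathcal{M}^i,w)$ via Lemma \ref{L:claim1} and, conversely, a countermodel for $(E^\pm(\Gamma),E^\pm(\Delta))$ to $(\mathcal{M}^{n4},w)$ via Lemma \ref{L:claim2}. Your remarks about the empty-$\Delta$ case and about the difficulty being concentrated in the two lemmas are accurate but add nothing beyond the paper's two-line proof.
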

\begin{proof}
	We argue as in the proof of Proposition \ref{P:prop-embedding}. More precisely, if $\mathcal{M}, w \models (\Gamma, \Delta)$, then, by Lemma \ref{L:claim1}, $\mathcal{M}^i, w \models_{\mathsf{IntCK}^{e+}}(E^\pm(\Gamma), E^\pm(\Delta))$. Conversely, if $\mathcal{M}, w\models_{\mathsf{IntCK}^{e+}}(E^\pm(\Gamma), E^\pm(\Delta))$, then, by Lemma \ref{L:claim2}, $\mathcal{M}^{n4}, w \models (\Gamma, \Delta)$.  
\end{proof}
\begin{remark}\label{R:3}
1. There are alternative ways to define the embedding of $\mathsf{N4CK}$ into $\mathsf{IntCK}^{e+}$. For example, the mapping $E^\mp$ obtained by extending the definition of $E$ with the following clauses:
\begin{align*}
	E^\mp(\phi\boxto \psi)&:= E^\mp(\sim\phi)\boxto (E^\mp(\phi) \boxto E^\mp(\psi))\\
	E^\mp(\sim(\phi\boxto \psi))&:= E^\mp(\sim\phi)\diamondto (E^\mp(\phi) \diamondto E^\mp(\sim\psi))
\end{align*}
is also correct.

2. However, some obvious simplifications of $E^\pm$ fail to correctly embed $\mathsf{N4CK}$ into $\mathsf{IntCK}^{e+}$. For example, consider the mapping $E^+$ obtained from $E$ by extending its definition with the clauses
\begin{align*}
	E^+(\phi\boxto \psi)&:= E^+(\phi) \boxto E^+(\psi)\\
	E^+(\sim(\phi\boxto \psi))&:= E^+(\phi) \diamondto E^+(\sim\psi)
\end{align*}
$E^+$ fails to correctly embed $\mathsf{N4CK}$ into $\mathsf{IntCK}^{e+}$ since we have $\{(p_1 \wedge \sim p_2)\boxto p_3\} \not\models_{\mathsf{N4CK}} \{(\sim(p_1 \to p_2))\boxto p_3\}$. Indeed, just consider any $\mathcal{M} = (W, \leq, R, V^+, V^-)\in CNel$ such that $\|\sim(p_1\to p_2)\|^-_\mathcal{M}\neq \|p_1\wedge\sim p_2\|^-_\mathcal{M}$, and $V^+(p_3) = R_{\|p_1 \wedge \sim p_2\|_\mathcal{M}} = \emptyset \neq R_{\|\sim(p_1 \to p_2)\|_\mathcal{M}}$. On the other hand, we have:
$$
E^+((p_1 \wedge \sim p_2)\boxto p_3) = (p_1\wedge q_2)\boxto p_3 = E^+((\sim(p_1 \to p_2))\boxto p_3),
$$
therefore, we must also have $\{E^+((p_1 \wedge \sim p_2)\boxto p_3)\} \models_{\mathsf{IntCK}^+} \{E^+((\sim(p_1 \to p_2))\boxto p_3)\}$.

Similarly, the mapping $E^-$ obtained by extending the definition of $E$ with the following clauses:
\begin{align*}
	E^-(\phi\boxto \psi)&:= E^-(\sim\phi) \boxto E^-(\psi)\\
	E^-(\sim(\phi\boxto \psi))&:= E^-(\sim\phi) \diamondto E^-(\sim\psi)
\end{align*}
does not give a faithful embedding, since we have  $\{(\sim(p_1 \wedge \sim p_2))\boxto p_3\} \not\models_{\mathsf{N4CK}} \{(p_1 \to p_2)\boxto p_3\}$; however, due to the equalities 
$$
E^-((\sim(p_1 \wedge \sim p_2))\boxto p_3) = (p_1\wedge q_2)\boxto p_3 = E^-((p_1 \to p_2)\boxto p_3),
$$
we must also have $\{E^-((\sim(p_1 \wedge \sim p_2))\boxto p_3)\} \models_{\mathsf{IntCK}^{e+}} \{E^-((p_1 \to p_2)\boxto p_3)\}$.

3. On the other hand, it is easy to see that $E^+$ faithfully embeds $\mathsf{N4CK}'$ into $\mathsf{IntCK}^{e+}$.

4. Moreover, one can show that, for all  $\Gamma, \Delta \subseteq \mathcal{L}_{\boxto}$, $\Gamma\models_{\mathsf{N4CK}}\Delta$ \textit{implies} both $E^+(\Gamma)\models_{\mathsf{IntCK}^{e+}} E^+(\Delta)$	and $E^-(\Gamma)\models_{\mathsf{IntCK}^{e+}} E^-(\Delta)$; thus, no counterexamples can be given to the other direction in the faithfulness claim for $E^+$ and $E^-$. Indeed, the construction given in Lemma \ref{L:claim2} can be straightforwardly adapted to the respective definitions of $E^+$ and $E^-$; for example, in the case of $E^+$ one needs to set $R^{n4}:= \{(w, (X,Y), v)\mid ((w, X, v)\in R\}$.
\end{remark}

\subsection{Modal logics}\label{sub:modal}
Conditional logics often have modal companions that are faithfully embeddable into them by a simple and natural translation. Since $\mathsf{CK}$ and $\mathsf{IntCK}$ can be viewed as basic classical and intuitionistic conditional logic, respectively, their natural companions are provided by the minimal normal modal logic $\mathsf{K}$ and the basic intuitionistic modal logic $\mathsf{IK}$ introduced by G. Fischer-Servi in \cite{fischer-servi}. We will argue in the present subsection that this series can be continued with $\mathsf{N4CK}$ and the modal logic $\mathsf{FSK}^d$ introduced in \cite{odintsovwansing}.

But first let us briefly discuss the modal logics in question. Whereas $\mathsf{K}$ (understood here over $\mathcal{L}_\Box$) is relatively well-known, both $\mathsf{IK}$ and $\mathsf{FSK}^d$ require an introduction. We start by defining the Nelsonian modal models:
\begin{definition}\label{D:modal-model}
	A Nelsonian modal model is a structure of the form $\mathcal{M} = (W, \leq, R, V^+, V^-)$, where $(W, \leq, V^+, V^-)\in Nel$ and $R \subseteq W \times W$ satisfies the following conditions:
	\begin{align}
		\leq^{-1}\circ R &\subseteq R\circ\leq^{-1}\label{Cond:1m}\tag{c1-m}\\
		R\circ\leq &\subseteq \leq\circ R\label{Cond:2m}\tag{c2-m}
	\end{align}	
The model is extended, if $V^+$ and $V^-$ are defined on $Var \cup Var'$ instead of $Var$.
\end{definition}
The class of all (extended) Nelsonian modal models will be denoted by $MNel$ (resp. $MNel^e$); for any $\mathcal{M} = (W, \leq, R, V^+, V^-) \in MNel$ (resp. $MNel^e$), the structure $(W, \leq, R, V^+)$ is called an (extended) intuitionistic modal model. The class of all (extended) intuitionistic modal models will be denoted by $MInt$ (resp. $MInt^e$). 

It remains to define the satisfaction relations for the two modal logics. For $\mathsf{FSK}^d$, we have the pair of satisfaction relations denoted by $\models^+_m$ and $\models^-_m$, respectively. Their definition is given by induction on the construction of a formula in $\mathcal{L}_\Box$ and extends the inductive definitions of $\models^+_{\mathsf{N4}}$ and $\models^-_{\mathsf{N4}}$ by adding the following clauses for $\Box$:
\begin{align*}
	\mathcal{M}, w&\models_m^+ \Box\psi \text{ iff } (\forall v \geq w)(\forall u \in W)(R(v, u) \text{ implies }\mathcal{M}, u\models^+ \psi)\\
	\mathcal{M}, w&\models_m^- \Box\psi \text{ iff } (\exists u \in W)(R(w, u)\text{ and }\mathcal{M}, u\models^- \chi)	
\end{align*}
As for $\mathsf{IK}$, we only define a single satisfaction relation $\models^i_m$, which extends the definition of $\models_{\mathsf{IL}}$ with the two additional clauses:
\begin{align*}
	\mathcal{M}, w&\models^i_m \Box\psi \text{ iff } (\forall v \geq w)(\forall u \in W)(R(v, u) \text{ implies }\mathcal{M}, u\models^i_m \psi)\\
	\mathcal{M}, w&\models^i_m \Diamond\psi \text{ iff } (\exists u \in W)(R(w, u)\text{ and }\mathcal{M}, u\models^i_m \psi)	
\end{align*}
We now define our logics\footnote{The authors of \cite{odintsovwansing} define $\mathsf{FSK}^d$ over $\mathcal{L}_{(\Box,\Diamond)}$ instead and also use two accessibility relations, $R_\Box$ and $R_\Diamond$ instead of just $R$. The reason is that they are interested in a general semantic framework covering both $\mathsf{FSK}^d$ and a number of weaker $\mathsf{N4}$-based modal logics. The simpler definitions of the present paper are easily seen to yield the same semantics for $\mathsf{FSK}^d$ as the more complicated setting chosen in  \cite{odintsovwansing}.} as follows:
$$
	\mathsf{FSK}^d:= \mathsf{L}(\mathcal{L}_\Box, MNel, \models_m^+);\,\mathsf{IK}:= \mathsf{L}(\mathcal{L}_{(\Box,\Diamond)}, MInt, \models_m^i).
$$

Both $\mathsf{K}$ and $\mathsf{IK}$ (defined over $\mathcal{L}_\Box$ and $\mathcal{L}_{(\Box,\Diamond)}$, respectively) are embedded into their corresponding conditional logics, that is to say, into $\mathsf{CK}$ and $\mathsf{IntCK}$, respectively, by what is essentially one and the same translation mapping. To make the matters more precise, let $\phi\in \mathcal{L}_{(\boxto,\diamondto)}$. The mapping $Tr^i_\phi:\mathcal{L}_{(\Box,\Diamond)}\to\mathcal{L}_{(\boxto,\diamondto)}$ is defined by the following induction on the construction of $\psi\in \mathcal{L}_{(\Box,\Diamond)}$:
\begin{align*}
	Tr^i_\phi(p)&:= p&&p\in Var\\
	Tr^i_\phi(\sim\psi)&:=\sim Tr^i_\phi(\psi)\\
	Tr^i_\phi(\psi\ast\chi)&:= Tr^i_\phi(\psi)\ast Tr^i_\phi(\chi)&&\ast\in \{\wedge, \vee, \to\}\\
	Tr^i_\phi(\Box\psi)&:= \phi\boxto Tr^i_\phi(\psi)&&Tr^i_\phi(\Diamond\psi):= \phi\diamondto Tr^i_\phi(\psi)
\end{align*}
For $\phi\in \mathcal{L}_{\boxto}$, we also set\footnote{Another option would be just to leave $Tr^i$ untouched while reading $\Diamond\chi$ as the abbreviation for $\sim\Box\sim\chi$ and $\chi\diamondto\theta$ as the abbreviation for $\sim(\chi\boxto\sim\theta)$, with the same result.} $Tr_\phi:= (Tr^i_\phi\upharpoonright\mathcal{L}_\Box):\mathcal{L}_\Box\to\mathcal{L}_{\boxto}$.

The following results have been established earlier:
\begin{proposition}\label{P:embeddings}
	Let $\phi\in \mathcal{L}_{(\boxto,\diamondto)}$ and let $\Gamma, \Delta\subseteq \mathcal{L}_{(\Box,\Diamond)}$. Then the following statements hold:
	\begin{enumerate}
		\item $\Gamma\models_{\mathsf{IK}}\Delta$ iff $Tr^i_\phi(\Gamma)\models_{\mathsf{IntCK}}Tr^i_\phi(\Delta)$.
		
		\item In case  $\phi\in \mathcal{L}_{\boxto}$ and $\Gamma, \Delta\subseteq \mathcal{L}_{\Box}$, then also $\Gamma\models_{\mathsf{K}}\Delta$ iff $Tr_\phi(\Gamma)\models_{\mathsf{CK}}Tr_\phi(\Delta)$.
	\end{enumerate}
\end{proposition}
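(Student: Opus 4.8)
Both parts are proved by shuttling countermodels between the modal and the conditional semantics while keeping the underlying frame $(W,\leq)$ and the valuation fixed, altering only the accessibility data; I will give Part 1 in some detail and treat Part 2 as its classical specialization. The key is a transfer lemma with two halves. \emph{(a)} Given $\mathcal{N}=(W,\leq,R,V)\in MInt$, form the conditional model $\mathcal{N}^{c}:=(W,\leq,R^{c},V)$ with the \emph{constant} family $R^{c}_{X}:=R$ for all $X\subseteq W$ (i.e. $R^{c}:=\{(w,X,u)\mid (w,u)\in R,\ X\subseteq W\}$). Then \eqref{Cond:1i} and \eqref{Cond:2i} for $\mathcal{N}^{c}$ are literally \eqref{Cond:1m} and \eqref{Cond:2m} for $\mathcal{N}$, so $\mathcal{N}^{c}\in CInt$. \emph{(b)} Given $\mathcal{M}=(W,\leq,R,V)\in CInt$ and the fixed $\phi$, form the modal model $\mathcal{M}^{m}:=(W,\leq,R_{\|\phi\|^{i}_{\mathcal{M}}},V)$; here \eqref{Cond:1m} and \eqref{Cond:2m} for $\mathcal{M}^{m}$ are precisely the instances of \eqref{Cond:1i} and \eqref{Cond:2i} at $X=\|\phi\|^{i}_{\mathcal{M}}$, so $\mathcal{M}^{m}\in MInt$. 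In both constructions $\leq$ and $V$ are untouched, so monotonicity of the valuation and $W\neq\emptyset$ are inherited.

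Next I would establish, by a routine simultaneous induction on $\psi\in\mathcal{L}_{(\Box,\Diamond)}$ carried out over all points at once, that $\mathcal{N},v\models^{i}_{m}\psi$ iff $\mathcal{N}^{c},v\models^{i} Tr^{i}_{\phi}(\psi)$ and that $\mathcal{M}^{m},v\models^{i}_{m}\psi$ iff $\mathcal{M},v\models^{i} Tr^{i}_{\phi}(\psi)$, for every $v\in W$. The atomic case and the induction steps for $\sim,\wedge,\vee,\to$ are immediate, since $Tr^{i}_{\phi}$ commutes with these connectives and the relevant clauses of $\models^{i}_{m}$ and $\models^{i}$ coincide. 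For $\psi=\Box\chi$ we have $Tr^{i}_{\phi}(\Box\chi)=\phi\boxto Tr^{i}_{\phi}(\chi)$, and in case (a), $\mathcal{N},v\models^{i}_{m}\Box\chi$ iff $(\forall v'\geq v)(\forall u)(R(v',u)\Rightarrow\mathcal{N},u\models^{i}_{m}\chi)$; since the family $R^{c}$ is constant we have $R^{c}_{\|\phi\|^{i}_{\mathcal{N}^{c}}}=R$ no matter what $\|\phi\|^{i}_{\mathcal{N}^{c}}$ is, so the IH turns this into $(\forall v'\geq v)(\forall u)(R^{c}_{\|\phi\|^{i}_{\mathcal{N}^{c}}}(v',u)\Rightarrow\mathcal{N}^{c},u\models^{i} Tr^{i}_{\phi}(\chi))$, i.e. $\mathcal{N}^{c},v\models^{i}\phi\boxto Tr^{i}_{\phi}(\chi)$. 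The case $\psi=\Diamond\chi$ is identical with the existential clause, and case (b) is the same computation read from right to left, this time exploiting that the accessibility relation of $\mathcal{M}^{m}$ \emph{is} $R_{\|\phi\|^{i}_{\mathcal{M}}}$. Note it is harmless that $\phi$ may contain $\boxto,\diamondto$: in case (a) the value of $\|\phi\|^{i}_{\mathcal{N}^{c}}$ is never used, and in case (b) we merely feed the subset $\|\phi\|^{i}_{\mathcal{M}}\subseteq W$, already fixed by $\mathcal{M}$, into the construction.

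Granting the transfer lemma, Part 1 follows at once: a pointed $MInt$-model witnessing $\Gamma\not\models_{\mathsf{IK}}\Delta$ is turned by (a) into a pointed $CInt$-model witnessing $Tr^{i}_{\phi}(\Gamma)\not\models_{\mathsf{IntCK}}Tr^{i}_{\phi}(\Delta)$, and a $CInt$-countermodel for the translated inference is turned by (b) into an $MInt$-countermodel for the original one. For Part 2 I would repeat the two constructions starting from the standard Kripke semantics of $\mathsf{K}$ and the classical Chellas semantics of $\mathsf{CK}$; since these amount to restricting $\mathsf{IK}$ and $\mathsf{IntCK}$ to frames with trivial preorder and both constructions leave $\leq$ — hence the triviality of $\leq$ — intact, the lemma and the argument restrict unchanged (or one may simply cite the familiar Kripke characterizations of $\mathsf{K}$ and $\mathsf{CK}$); an argument of this kind is already carried out in \cite{olkhovikov}. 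There is no serious obstacle here: the only place demanding a moment's care is the apparent circularity in the dependence of $\|\phi\|$ on the model, which is removed by the observation above, and everything else is bookkeeping.
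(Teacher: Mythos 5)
Your proof is correct, but it follows a genuinely different route from the paper. The paper does not argue semantically at all: it observes that the theoremhood-level versions of both parts (for $Tr_\top$, hence for arbitrary $Tr_\phi$) are already in the literature, and then lifts them to full consequence statements syntactically, exactly as in the proof of Proposition \ref{P:embed} — by translating Hilbert-style deductions formula by formula (translations commute with \eqref{E:mp}) and using an inverse translation of the kind $\overline{Tr}$ to come back, so the heavy lifting is done by the axiomatizations and known completeness facts. You instead give a direct, self-contained model-theoretic argument: from an $MInt$-model build a conditional model with the constant family $R^c_X := R$, and from a $CInt$-model extract the single relation $R_{\|\phi\|^i_{\mathcal{M}}}$, then transfer truth by induction and shuttle countermodels; your observation that the apparent circularity in $\|\phi\|^i_{\mathcal{M}}$ is harmless (unused in one direction, a fixed subset of $W$ in the other) is exactly the right point of care, and the two frame conditions do match as you say. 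The paper itself remarks (footnote to Proposition \ref{P:embed}) that a purely model-theoretic argument is possible but chose the syntactic one as more concise. What your route buys is independence from the axiomatizations: since $\mathsf{IK}$ and $\mathsf{IntCK}$ are defined semantically in this paper, Part 1 comes out directly for arbitrary (even infinite) $\Gamma,\Delta$ with no appeal to cited theoremhood results, compactness, or inverse translations. What it costs shows up in Part 2: there $\mathsf{K}$ and $\mathsf{CK}$ are handled in the paper via their Hilbert systems, so your semantic construction (restricting to trivial preorders) must be glued to the standard strong soundness/completeness of $\mathsf{K}$ and $\mathsf{CK}$ with respect to Kripke and Chellas models — you acknowledge this, and it is a legitimate citation, but it is the one place where your argument, like the paper's, leans on external results rather than on the constructions themselves.
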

Part 1 for the provable formulas relative to $Tr_\top$ can be found in \cite[Proposition 4]{olkhovikov}; Part 2 for the provable formulas relative to $Tr_\top$  was claimed, e.g.. in \cite[Theorem 4]{weiss-thesis}. The idea to use $Tr_\top$ to relate modal and conditional logics seems to be due to \cite{lowe}. However, it is easy to see that the same argument goes through for the provable formulas relative to $Tr_\phi$ for any $\phi$ in the target language of the embedding. The general version of Parts 1 and 2 can then be derived as in the proof of Proposition \ref{P:embed} below.

Note, however, that the translation given by $Tr_\phi$ is different from the one used in \cite{williamson}: denoting the latter translation by $T$, the crucial inductive clause there is given by $T(\Box\phi) := \sim T(\phi)\boxto T(\phi)$ and so the antecedent of the translation is not fixed, but, generally speaking, depends on the boxed formula itself.

Our goal is now to extend Proposition \ref{P:embeddings} to $\mathsf{FSK}^d$ and $\mathsf{N4CK}$. In other words, we are going to prove the following:
\begin{proposition}\label{P:embed}
	Given a $\phi\in \mathcal{L}_{\boxto}$ and $\Gamma, \Delta\subseteq \mathcal{L}_{\Box}$, we have $\Gamma\models_{\mathsf{FSK}^d}\Delta$ iff $Tr_\phi(\Gamma)\models_{\mathsf{N4CK}}Tr_\phi(\Delta)$.
\end{proposition}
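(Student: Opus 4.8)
The plan is to establish the biconditional purely semantically, via two model transformations — one collapsing a conditional model to a modal one, the other inflating a modal model to a conditional one — each accompanied by a truth-preservation lemma proved by simultaneous induction on the two satisfaction relations $\models^{+}$ and $\models^{-}$. Since $\Gamma,\Delta\subseteq\mathcal{L}_\Box$, the translation $Tr_\phi$ only has to be controlled on $\mathcal{L}_\Box$, so in both inductions the clauses for $\wedge,\vee,\to,\sim$ are trivial: $Tr_\phi$ commutes with these connectives, and the verification and falsification clauses for them are literally the same in $\mathsf{N4}$, in $\mathsf{FSK}^d$, and in $\mathsf{N4CK}$; the only clause needing attention is the one for $\Box$.

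For the direction ``$Tr_\phi(\Gamma)\not\models_{\mathsf{N4CK}}Tr_\phi(\Delta)\Rightarrow\Gamma\not\models_{\mathsf{FSK}^d}\Delta$'' I would take a pointed model $(\mathcal{N},w)$ with $\mathcal{N}=(W,\leq,R,V^+,V^-)\in CNel$ and $\mathcal{N},w\models(Tr_\phi(\Gamma),Tr_\phi(\Delta))$, and collapse it to $\mathcal{N}^m:=(W,\leq,R_{\|\phi\|_{\mathcal{N}}},V^+,V^-)$, keeping the Nelsonian skeleton and taking as the single accessibility relation the binary relation $R_{\|\phi\|_{\mathcal{N}}}$ indexed by the fixed bi-set $\|\phi\|_{\mathcal{N}}$. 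One checks $\mathcal{N}^m\in MNel$: conditions \eqref{Cond:1m} and \eqref{Cond:2m} for $R_{\|\phi\|_{\mathcal{N}}}$ are precisely the instances of \eqref{Cond:1} and \eqref{Cond:2} at the bi-set $\|\phi\|_{\mathcal{N}}$. Then by simultaneous induction on $\psi\in\mathcal{L}_\Box$ one shows $\mathcal{N}^m,v\models_m^\star\psi$ iff $\mathcal{N},v\models^\star Tr_\phi(\psi)$ for all $v\in W$ and $\star\in\{+,-\}$; in the $\Box\chi$ case one unfolds the clauses for $\models_m^{\pm}\Box$ and for $\models^{\pm}\boxto$, substitutes the IH for $\chi$, and reads off that the two sides coincide by the definition of $\mathcal{N}^m$. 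Evaluating at $w$ transfers the refutation to $\mathcal{N}^m$.

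For the direction ``$\Gamma\not\models_{\mathsf{FSK}^d}\Delta\Rightarrow Tr_\phi(\Gamma)\not\models_{\mathsf{N4CK}}Tr_\phi(\Delta)$'' I would take $(\mathcal{M},w)$ with $\mathcal{M}=(W,\leq,R,V^+,V^-)\in MNel$ witnessing $\mathcal{M},w\models_m^+(\Gamma,\Delta)$, and inflate it to $\mathcal{M}^c:=(W,\leq,R^c,V^+,V^-)$ with $R^c:=\{(v,(X,Y),u)\mid (v,u)\in R\}$, so that $R^c_{(X,Y)}=R$ for \emph{every} bi-set $(X,Y)$. Then $\mathcal{M}^c\in CNel$ is immediate: for each $(X,Y)$, conditions \eqref{Cond:1} and \eqref{Cond:2} for $R^c_{(X,Y)}=R$ are just \eqref{Cond:1m} and \eqref{Cond:2m} for $R$. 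The point of making $R^c$ insensitive to its index is that in the simultaneous induction $\mathcal{M},v\models_m^\star\psi$ iff $\mathcal{M}^c,v\models^\star Tr_\phi(\psi)$, the $\Box\chi$ step rewrites $R^c_{\|\phi\|_{\mathcal{M}^c}}$ as $R$ without any need to compute $\|\phi\|_{\mathcal{M}^c}$, after which the equivalence with $\models_m^{\pm}\Box\chi$ follows from the IH for $\chi$. Evaluating at $w$ gives $Tr_\phi(\Gamma)\not\models_{\mathsf{N4CK}}Tr_\phi(\Delta)$.

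Since both constructions and both inductions are completely routine, there is no real obstacle; the only points that call for a moment's care are (i) running each induction simultaneously for verification and falsification, so that the clauses involving $\sim$, $\to$ and the $\boxto$-falsification condition go through, and (ii) in the inflation direction, observing that holding $R^c$ constant across bi-set indices is exactly what dissolves the apparent circularity in ``$R^c$ should realize $\phi$'s accessibility relation'' when $\phi$ itself contains $\boxto$. The same template, with $Tr^i_\phi$ in place of $Tr_\phi$ and with $CInt^e$/$MInt^e$ replacing $CNel$/$MNel$, likewise yields the general, antecedent-parametrized form of Proposition~\ref{P:embeddings}.
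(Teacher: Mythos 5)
Your proof is correct, but it takes a genuinely different route from the paper. You argue purely model-theoretically: you collapse a conditional Nelsonian model $\mathcal{N}$ to the modal model $(W,\leq,R_{\|\phi\|_{\mathcal{N}}},V^+,V^-)$ for one direction, and inflate a modal model to a conditional one with an index-insensitive relation $R^c_{(X,Y)}=R$ for the other, each time transferring truth by a simultaneous induction on $\models^+/\models^-$; both constructions are sound (the frame conditions \eqref{Cond:1m}/\eqref{Cond:2m} and \eqref{Cond:1}/\eqref{Cond:2} match up exactly as you say, and making $R^c$ constant in its bi-set index is indeed what removes any circularity in the $\Box$-clause). The paper instead proceeds proof-theoretically: it invokes the known Hilbert-style axiomatization of $\mathsf{FSK}^d$ and the completeness theorem for $\mathbb{N}4\mathbb{CK}$ (Theorem \ref{T:completeness}), proves the theoremhood version first (Lemma \ref{L:embed}) by translating derivations forwards via $Tr_\phi$ and backwards via an erasing translation $\overline{Tr}$ with $\overline{Tr}(Tr_\phi(\psi))=\psi$, leaning on the derived rules of Lemmas \ref{L:theorems} and \ref{L:theorems-m}, and then lifts this to bi-sets by manipulating deductions from premises; a footnote in the paper explicitly acknowledges that a purely model-theoretic argument like yours is possible. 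What each approach buys: yours is self-contained at the semantic level (it needs neither axiomatization nor completeness, so it proves the statement as literally defined, and, as you note, the same two constructions immediately give the antecedent-parametrized form of Proposition \ref{P:embeddings}); the paper's route is shorter given that the axiomatic infrastructure is already in place, and yields the extra syntactic byproduct that $\overline{Tr}$ maps $\mathsf{N4CK}$-theorems to $\mathsf{FSK}^d$-theorems (Corollary \ref{C:embed}), which a semantic argument does not directly provide.
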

We prepare the proof\footnote{A purely model-theoretic argument is also possible, but the proof we give here is more streamlined and concise.} by recalling the Hilbert-style axiomatization of $\mathsf{FSK}^d$ and proving some theorems and derived rules in this logic. First, consider the following axioms and inference rules (where $\Diamond$ is understood as the abbreviation for $\sim\Box\sim$):
\begin{align}
	&(\Box\phi\wedge\Box\psi)\to\Box(\phi\wedge\psi)\label{E:aa1}\tag{a1}\\
	&\Box(\phi\to\phi)\label{E:aa2}\tag{a2}\\
	&\Diamond(\phi\vee\psi)\to(\Diamond\phi\vee\Diamond\psi)\label{E:aa3}\tag{a3}\\
	&\Diamond(\phi\to\psi)\to(\Box\phi\to\Diamond\psi)\label{E:aa4}\tag{a4}\\
	&(\Diamond\phi\to\Box\psi)\to\Box(\phi\to\psi)\label{E:aa5}\tag{a5}\\
	&\sim\Box\phi\leftrightarrow\Diamond\sim\phi\label{E:aa6}\tag{a6}\\
	&\text{From }\phi\to \psi\text{ infer }\Box\phi\to\Box\psi\label{E:rmbox}\tag{rm$\Box$}\\
	&\text{From }\phi\to \psi\text{ infer }\Diamond\phi\to\Diamond\psi\label{E:rmdiam}\tag{rm$\Diamond$}
\end{align}
We now claim that $
\mathsf{FSK}^d = (\mathbb{N}4+(\eqref{E:aa1}-\eqref{E:aa6};\eqref{E:rmbox},\eqref{E:rmdiam}))[\mathcal{L}_\Box]$.
The following technical lemma will be needed towards our result:
\begin{lemma}\label{L:theorems-m}
	The following theorems and derived rules are provable in $\mathsf{FSK}^d$:
	\begin{align}
		&\Box\sim\phi\leftrightarrow\sim\Diamond\phi\label{E:t1}\tag{t1}\\
		&\Box(\phi\wedge\psi)\Leftrightarrow(\Box\phi\wedge\Box\psi)\label{E:t2}\tag{t2}\\
		&(\sim\Box\phi\wedge\Box\psi)\to \sim\Box(\phi\vee\sim\psi)\label{E:t3}\tag{t3}\\
		&\text{From }\phi\leftrightarrow \psi\text{ infer }\Box\phi\leftrightarrow\Box\psi\label{E:rbox}\tag{r$\Box$}\\
		&\text{From }\sim\phi\to \sim\psi\text{ infer }\sim\Box\phi\to\sim\Box\psi\label{E:rmnbox}\tag{rm$\sim\Box$}\\
		&\text{From }\sim\phi\leftrightarrow \sim\psi\text{ infer }\sim\Box\phi\leftrightarrow\sim\Box\psi\label{E:rnbox}\tag{r$\sim\Box$}
	\end{align}
\end{lemma}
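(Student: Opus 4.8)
The plan is to derive each item syntactically over the base $\mathsf{N4}$ (so the schemes \eqref{E:a0.1}--\eqref{E:a0.4} and all of $\mathbb{IL}^+$ are available, by Proposition \ref{P:prop}.1), using the modal axioms \eqref{E:aa1}--\eqref{E:aa6} together with the two monotonicity rules \eqref{E:rmbox}, \eqref{E:rmdiam}. The first thing to record is that, since $\leftrightarrow$ abbreviates a conjunction of two conditionals, \eqref{E:rmbox} and \eqref{E:rmdiam} at once yield the corresponding \emph{congruence} rules: from $\phi\leftrightarrow\psi$ infer $\Box\phi\leftrightarrow\Box\psi$ --- this is exactly \eqref{E:rbox} --- and likewise from $\phi\leftrightarrow\psi$ infer $\Diamond\phi\leftrightarrow\Diamond\psi$. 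Item \eqref{E:t1} is then immediate on unfolding $\Diamond\phi := \sim\Box\sim\phi$: $\sim\Diamond\phi$ is literally $\sim\sim\Box\sim\phi$, and \eqref{E:a0.1} gives $\sim\sim\Box\sim\phi\leftrightarrow\Box\sim\phi$. For \eqref{E:rmnbox} I use \eqref{E:aa6}, which makes $\sim\Box\chi$ provably equivalent to $\Diamond\sim\chi$: from $\sim\phi\to\sim\psi$, rule \eqref{E:rmdiam} gives $\Diamond\sim\phi\to\Diamond\sim\psi$, and composing with the two relevant instances of \eqref{E:aa6} produces $\sim\Box\phi\to\sim\Box\psi$; applying \eqref{E:rmnbox} in both directions and conjoining gives \eqref{E:rnbox}.

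For \eqref{E:t2}, which asserts a \emph{strong} equivalence $\Leftrightarrow$, I will establish the four component implications separately. The two verification implications $\Box(\phi\wedge\psi)\to(\Box\phi\wedge\Box\psi)$ and $(\Box\phi\wedge\Box\psi)\to\Box(\phi\wedge\psi)$ come from \eqref{E:rmbox} applied to the projections $\phi\wedge\psi\to\phi$, $\phi\wedge\psi\to\psi$, together with axiom \eqref{E:aa1}. For the falsification implications I use \eqref{E:a0.2}. In one direction: $\sim\phi\to\sim(\phi\wedge\psi)$ and $\sim\psi\to\sim(\phi\wedge\psi)$ are provable, so \eqref{E:rmnbox} gives $\sim\Box\phi\to\sim\Box(\phi\wedge\psi)$ and $\sim\Box\psi\to\sim\Box(\phi\wedge\psi)$, and a case split on $\sim(\Box\phi\wedge\Box\psi)\leftrightarrow(\sim\Box\phi\vee\sim\Box\psi)$ yields $\sim(\Box\phi\wedge\Box\psi)\to\sim\Box(\phi\wedge\psi)$. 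The converse implication $\sim\Box(\phi\wedge\psi)\to\sim(\Box\phi\wedge\Box\psi)$ is the only place where \eqref{E:aa3} enters: rewrite $\sim\Box(\phi\wedge\psi)$ as $\Diamond(\sim\phi\vee\sim\psi)$ via \eqref{E:aa6}, \eqref{E:a0.2} and the $\Diamond$-congruence rule, apply \eqref{E:aa3} to reach $\Diamond\sim\phi\vee\Diamond\sim\psi$, and translate back with \eqref{E:aa6} and \eqref{E:a0.2}.

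For \eqref{E:t3} the crux is the derivation of $\Diamond(\sim\phi\wedge\psi)$ from $\Diamond\sim\phi$ and $\Box\psi$. The $\mathsf{N4}$-theorem $\sim\phi\to(\psi\to(\sim\phi\wedge\psi))$ gives, by \eqref{E:rmdiam}, that $\Diamond\sim\phi\to\Diamond(\psi\to(\sim\phi\wedge\psi))$; then \eqref{E:aa4} instantiated at the antecedent $\psi$ and consequent $\sim\phi\wedge\psi$ turns $\Diamond(\psi\to(\sim\phi\wedge\psi))$ together with $\Box\psi$ into $\Diamond(\sim\phi\wedge\psi)$. It remains to rewrite the endpoints using \eqref{E:aa6}: $\sim\Box\phi\leftrightarrow\Diamond\sim\phi$ on the left, and on the right $\Diamond(\sim\phi\wedge\psi)\leftrightarrow\sim\Box(\phi\vee\sim\psi)$, obtained by combining \eqref{E:aa6}, the $\Diamond$-congruence rule, \eqref{E:a0.3} and \eqref{E:a0.1}; chaining these conditionals through \eqref{E:mp} gives \eqref{E:t3}.

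I expect the main obstacle to be bookkeeping rather than anything conceptual. The delicate point is to make sure that the congruence reasoning routed through $\Diamond$ is legitimate: it is, because once the $\sim\Box$'s are eliminated via \eqref{E:aa6} the $\Diamond$ always occurs in positive position, so at no point is a mere equivalent substituted under a $\sim$ --- which is the failure mode warned about in Proposition \ref{P:n4-basics}.5. The only other thing that requires attention is that the instances of \eqref{E:aa3} in \eqref{E:t2} and of \eqref{E:aa4} in \eqref{E:t3} are set up with the correct subformulas; once that is checked, every remaining step is a routine $\mathbb{IL}^+$-derivation closed under \eqref{E:mp}.
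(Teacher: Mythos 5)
Your proposal is correct and follows essentially the same route as the paper's Appendix~\ref{A:3}: \eqref{E:t1} as an instance of \eqref{E:a0.1}, \eqref{E:rbox} and \eqref{E:rnbox} from the monotonicity rules, \eqref{E:rmnbox} via \eqref{E:aa6} and \eqref{E:rmdiam}, \eqref{E:t2} by the same four component implications using \eqref{E:aa1}, \eqref{E:rmbox}, \eqref{E:rmnbox}, \eqref{E:aa3}, \eqref{E:aa6}, and \eqref{E:t3} via \eqref{E:rmdiam} and \eqref{E:aa4}. The only deviations are cosmetic: you route some rewritings through a derived $\Diamond$-congruence rule (itself justified by \eqref{E:rmdiam}) where the paper uses \eqref{E:rnbox}, and in \eqref{E:t3} you carry $\sim\phi\wedge\psi$ under the $\Diamond$ and De Morganize at the end, where the paper carries $\sim(\phi\vee\sim\psi)$ directly.
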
  
We observe that \eqref{E:t1} is just an instance of \eqref{E:a0.1} and that \eqref{E:rbox} (resp. \eqref{E:rnbox}) easily follow from \eqref{E:rmbox} (resp. \eqref{E:rmnbox}); the rest of the proof of Lemma \ref{L:theorems-m} is relegated to Appendix \ref{A:3}. Our Claim about the axiomatization of $
\mathsf{FSK}^d$ over $\mathcal{L}_\Box$ now easily follows from \cite[Theorem 4]{odintsovwansing} and \eqref{E:t1}. The following lemma gives a restricted version of Proposition \ref{P:embed} for the provable formulas:
\begin{lemma}\label{L:embed}
	For all $\phi \in \mathcal{L}_{\boxto}$ and $\psi \in \mathcal{L}_\Box$, $\psi\in\mathsf{FSK}^d$ iff $Tr_\phi(\psi)\in\mathsf{N4CK}$.
\end{lemma}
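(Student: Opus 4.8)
The plan is to prove both directions of the biconditional semantically, by exhibiting a matching pair of translations between conditional Nelsonian models and Nelsonian modal models that is compatible with $Tr_\phi$. First I would set up two model transformations. Given $\mathcal{M}' = (W, \leq, R', V^+, V^-) \in CNel$, put $\mathcal{M}'_\phi := (W, \leq, R'_{\|\phi\|_{\mathcal{M}'}}, V^+, V^-)$; since conditions \eqref{Cond:1} and \eqref{Cond:2} hold of $R'$ for every bi-set of subsets of $W$, instantiating them at $\|\phi\|_{\mathcal{M}'}$ yields exactly \eqref{Cond:1m} and \eqref{Cond:2m} for $R'_{\|\phi\|_{\mathcal{M}'}}$, so $\mathcal{M}'_\phi \in MNel$. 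Conversely, given $\mathcal{M} = (W, \leq, R, V^+, V^-) \in MNel$, put $\mathcal{M}^\phi := (W, \leq, R^\bullet, V^+, V^-)$, where $R^\bullet := \{(w, (X,Y), v)\mid w,v \in W,\ X,Y\subseteq W,\ (w,v)\in R\}$, i.e. $R^\bullet_{(X,Y)} = R$ for \emph{every} bi-set $(X,Y)$. Because $R^\bullet$ is constant in its middle argument, \eqref{Cond:1} and \eqref{Cond:2} for $R^\bullet$ collapse, for each $(X,Y)$, to \eqref{Cond:1m} and \eqref{Cond:2m} for $R$, so $\mathcal{M}^\phi \in CNel$. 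The point of this slightly wasteful construction is that it dodges a circularity: whatever $\|\phi\|_{\mathcal{M}^\phi}$ turns out to be, we still have $R^\bullet_{\|\phi\|_{\mathcal{M}^\phi}} = R$, hence $(\mathcal{M}^\phi)_\phi = \mathcal{M}$ exactly.

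The heart of the argument is the following \emph{translation lemma}: for every $\mathcal{M}' \in CNel$, every $w \in W$, every $\star \in \{+,-\}$ and every $\chi \in \mathcal{L}_\Box$, we have $\mathcal{M}', w \models^\star Tr_\phi(\chi)$ iff $\mathcal{M}'_\phi, w \models^\star_m \chi$. This is proved by simultaneous induction on $\chi$ for $\star = +$ and $\star = -$. The propositional cases are immediate, since $Tr_\phi$ commutes with $\wedge, \vee, \sim, \to$, the models $\mathcal{M}'$ and $\mathcal{M}'_\phi$ share the same $W$, $\leq$ and valuations, and the relevant clauses of $\models^\pm$ and $\models^\pm_m$ coincide. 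For $\chi = \Box\theta$ one uses $Tr_\phi(\Box\theta) = \phi \boxto Tr_\phi(\theta)$ together with the equation $R'_{\|\phi\|_{\mathcal{M}'}} = R_{\mathcal{M}'_\phi}$: the verification clause for $\boxto$ (with its $(\forall v \geq w)$ prefix) then matches the verification clause for $\Box$ after applying the induction hypothesis to $\theta$, and similarly for falsification via the existential clauses. No separate case for $\Diamond$ is needed, since $\Diamond$ abbreviates $\sim\Box\sim$ in $\mathsf{FSK}^d$ and $Tr_\phi(\sim\Box\sim\theta) = \sim(\phi\boxto\sim Tr_\phi(\theta)) = \phi\diamondto Tr_\phi(\theta)$ under the abbreviation governing $\diamondto$ in $\mathsf{N4CK}$.

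Both directions then fall out of the translation lemma. For ($\Rightarrow$): if $\psi \in \mathsf{FSK}^d$, then $\mathcal{M}'_\phi, w \models^+_m \psi$ for every $\mathcal{M}' \in CNel$ and $w \in W$, hence $\mathcal{M}', w \models^+ Tr_\phi(\psi)$ by the lemma; as $(\mathcal{M}',w)$ was arbitrary, $Tr_\phi(\psi) \in \mathsf{N4CK}$. For ($\Leftarrow$): if $Tr_\phi(\psi) \in \mathsf{N4CK}$, then for an arbitrary $\mathcal{M} \in MNel$ and $w$ we have $\mathcal{M}^\phi, w \models^+ Tr_\phi(\psi)$; applying the lemma to $\mathcal{M}^\phi$ and using $(\mathcal{M}^\phi)_\phi = \mathcal{M}$ gives $\mathcal{M}, w \models^+_m \psi$, so $\psi \in \mathsf{FSK}^d$.

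The only genuinely nontrivial ingredient is the constant-relation construction $\mathcal{M}^\phi$ needed for the ($\Leftarrow$) direction — the rest is a routine induction — so that is where I would concentrate the write-up. As an aside, the ($\Rightarrow$) direction can alternatively be done purely syntactically, by induction on an $\mathsf{FSK}^d$-derivation using the Hilbert axiomatization of $\mathsf{FSK}^d$ recalled above: the axioms \eqref{E:aa1}--\eqref{E:aa6} translate under $Tr_\phi$ into instances of \eqref{E:a1}, \eqref{E:a4}, \eqref{E:T4}, \eqref{E:T5}, \eqref{E:a3}, \eqref{E:T6} from Lemma~\ref{L:theorems}, and the rules \eqref{E:rmbox}, \eqref{E:rmdiam} into applications of \eqref{E:Rmbox}, \eqref{E:Rmdiam}; but since the semantic route covers both directions uniformly, I would present that one.
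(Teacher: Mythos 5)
Your proof is correct, but it takes a genuinely different route from the paper's. You argue model-theoretically: you pass from a conditional Nelsonian model $\mathcal{M}'$ to the modal model $\mathcal{M}'_\phi$ by freezing the ternary relation at the bi-set $\|\phi\|_{\mathcal{M}'}$ (conditions \eqref{Cond:1}--\eqref{Cond:2} instantiate to \eqref{Cond:1m}--\eqref{Cond:2m}), and conversely from a modal model $\mathcal{M}$ to the conditional model $\mathcal{M}^\phi$ with constant ternary relation, whose key feature $(\mathcal{M}^\phi)_\phi = \mathcal{M}$ neatly sidesteps the circularity of having to know $\|\phi\|$ in the model being built; the simultaneous $\pm$ translation lemma then gives both directions at once, since both logics are \emph{defined} semantically. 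The paper instead argues proof-theoretically: it translates $\mathsf{FSK}^d$-derivations forward under $Tr_\phi$ (axioms \eqref{E:aa1}--\eqref{E:aa6} going to \eqref{E:a1}, \eqref{E:a4}, \eqref{E:T4}, \eqref{E:T5}, \eqref{E:a3}, \eqref{E:T6}, rules to \eqref{E:Rmbox}, \eqref{E:Rmdiam} — essentially your syntactic aside, modulo the nuance that \eqref{E:aa1} maps to a consequence, not an instance, of \eqref{E:a1}), and for the converse introduces the erasing translation $\overline{Tr}$ with $\overline{Tr}(\psi\boxto\chi)=\Box\overline{Tr}(\chi)$, checks $\overline{Tr}\circ Tr_\phi=\mathrm{id}$ on $\mathcal{L}_\Box$, and shows $\overline{Tr}$ sends every $\mathbb{N}4\mathbb{CK}$ axiom and rule to something $\mathsf{FSK}^d$-derivable (Lemma \ref{L:theorems-m}); the paper's own footnote acknowledges that a purely model-theoretic argument like yours is possible. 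The trade-off: your argument is self-contained at the level of the Kripke semantics and needs neither the completeness of the recalled Hilbert axiomatization of $\mathsf{FSK}^d$ (imported from \cite{odintsovwansing}) nor Theorem \ref{T:completeness}, and it would extend verbatim to the full consequence-relation statement of Proposition \ref{P:embed}; the paper's syntactic route, on the other hand, produces the map $\overline{Tr}$ and Corollary \ref{C:embed} as by-products, which it then reuses in its proof of Proposition \ref{P:embed}.
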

\begin{proof}
	Choose a proof $\psi_1,\ldots,\psi_n = \psi$ in $\mathsf{FSK}^d$. Consider the sequence $Tr_\phi(\psi_1),\ldots,Tr_\phi(\psi_n) = Tr_\phi(\psi)$. The application of $Tr_\phi$ leaves intact every axiom of $\mathbb{N}4$ and every application instance of \eqref{E:mp}, and maps every instance of \eqref{E:aa1}, (resp. \eqref{E:aa2}, \eqref{E:aa3}, \eqref{E:aa4}, \eqref{E:aa5}, \eqref{E:aa6}) into a consequence of \eqref{E:a1} (resp. into an instance of \eqref{E:a4}, \eqref{E:T4}, \eqref{E:T5}, \eqref{E:a3}, \eqref{E:T6}). Similarly, every application of the rule \eqref{E:rmbox} (resp. \eqref{E:rmdiam}) is mapped by $Tr_\phi$ into an application of \eqref{E:Rmbox} (resp. \eqref{E:Rmdiam}). Therefore, one can straightforwardly extend $Tr_\phi(\psi_1),\ldots,Tr_\phi(\psi_n)$ to a proof of $Tr_\phi(\psi)$ in $\mathsf{N4CK}$ by inserting the variants of deductions sketched in the proof of Lemma \ref{L:theorems}.
	
	In the other direction, let $\psi_1,\ldots,\psi_n = Tr_\phi(\psi)$ be a proof in $\mathsf{N4CK}$. Consider the mapping $\overline{Tr}:(\mathcal{L}_{\boxto})\to\mathcal{L}_\Box$ defined by induction on the construction of $\phi\in\mathcal{L}_{\boxto}$:
	\begin{align*}
		\overline{Tr}(p)&:= p&&p\in Var\\
		\overline{Tr}(\sim\psi)&:=\sim\overline{Tr}(\psi)\\
		\overline{Tr}(\psi\ast\chi)&:= \overline{Tr}(\psi)\ast \overline{Tr}(\chi)&&\ast\in \{\wedge, \vee, \to\}\\
		\overline{Tr}(\psi\boxto\chi)&:= \Box\overline{Tr}(\chi)
	\end{align*}
	The following can be easily proved by induction on the construction of $\psi\in\mathcal{L}_\Box$:
	
	\textit{Claim}. For every $\psi\in\mathcal{L}_\Box$, $\overline{Tr}(Tr_\phi(\psi)) = \psi$.
	
	Both basis and every case in the induction step are straightforward. For example, if $\psi = \Box\chi$ then $\overline{Tr}(Tr_\phi(\Box\chi)) = \overline{Tr}(\phi\boxto\chi) = \Box\chi$. Our Claim is proven.
	
	Turning back to our proof of $Tr_\phi(\psi)$ in $\mathsf{N4CK}$, we consider the sequence of $\mathcal{L}_\Box$-formulas $\overline{Tr}(\psi_1),\ldots,\overline{Tr}(\psi_n) = \overline{Tr}(Tr_\phi(\psi)) = \psi$, where the last equality holds by our Claim. We observe that the translation given by $\overline{Tr}$ leaves intact every axiom of $\mathbb{N}4$ and every application of \eqref{E:mp}; as for the other axioms and rules, $\overline{Tr}$ maps every instance of \eqref{E:a1}, (resp. \eqref{E:a2}, \eqref{E:a3}, \eqref{E:a4}) into an instance of \eqref{E:t2} (resp. \eqref{E:t3},  \eqref{E:aa5}, \eqref{E:aa2}). Similarly, every application of the rule \eqref{E:RCbox1} (resp. \eqref{E:RCbox2}) is mapped by $\overline{Tr}$ into an application of the rule \eqref{E:rbox} (resp. \eqref{E:rnbox}). Finally, the conclusion of every application of the rule \eqref{E:RAbox} is mapped by $\overline{Tr}$ into a formula of the form $\Box\psi\leftrightarrow\Box\psi\in\mathsf{N4}$. Therefore, one can extend $\overline{Tr}(\psi_1),\ldots,\overline{Tr}(\psi_n)  = \psi$ to a proof of $\psi$ in $\mathsf{FSK}^d$ by inserting the variants of deductions sketched in the proof of Lemma \ref{L:theorems-m}.
\end{proof}
We observe that the proof of Lemma \ref{L:embed} above implies the following corollary:
\begin{corollary}\label{C:embed}
	For every $\psi \in \mathsf{N4CK}$, we have $\overline{Tr}(\psi)\in\mathsf{FSK}^d$.
\end{corollary}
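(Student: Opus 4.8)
The plan is to read the corollary off the ``other direction'' of the proof of Lemma~\ref{L:embed}: that argument transforms an arbitrary $\mathbb{N}4\mathbb{CK}$-proof into an $\mathsf{FSK}^d$-proof by applying $\overline{Tr}$ termwise, and it never actually uses the assumption that the proved formula is of the form $Tr_\phi(\psi)$ — that hypothesis was only invoked, via the Claim $\overline{Tr}(Tr_\phi(\psi)) = \psi$, in order to recognise the last line of the transformed proof. Dropping that identification is all that is needed.

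Concretely, I would start from $\psi \in \mathsf{N4CK}$, use Theorem~\ref{T:completeness} to fix a proof $\psi_1, \ldots, \psi_n = \psi$ of $\psi$ in $\mathbb{N}4\mathbb{CK}$, and form the sequence of $\mathcal{L}_\Box$-formulas $\overline{Tr}(\psi_1), \ldots, \overline{Tr}(\psi_n) = \overline{Tr}(\psi)$. Then I would replay the bookkeeping from the proof of Lemma~\ref{L:embed}: since $\overline{Tr}$ commutes with $\wedge, \vee, \to, \sim$, it sends each substitution instance of an $\mathbb{N}4$-axiom to a substitution instance of the same scheme and each use of \eqref{E:mp} to a use of \eqref{E:mp}; it sends instances of \eqref{E:a1}, \eqref{E:a2}, \eqref{E:a3}, \eqref{E:a4} to, respectively, an instance of \eqref{E:t2}, \eqref{E:t3}, \eqref{E:aa5}, \eqref{E:aa2}; it sends applications of \eqref{E:RCbox1} and \eqref{E:RCbox2} to applications of \eqref{E:rbox} and \eqref{E:rnbox}; and it sends the conclusion of an application of \eqref{E:RAbox} to a formula of the shape $\Box\chi \leftrightarrow \Box\chi$, which is provable in $\mathsf{FSK}^d$. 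Interpolating between the $\overline{Tr}(\psi_j)$'s the auxiliary deductions used in the proof of Lemma~\ref{L:theorems-m} (plus a trivial derivation of $\Box\chi\leftrightarrow\Box\chi$), the transformed sequence becomes a genuine $\mathsf{FSK}^d$-proof whose last line is $\overline{Tr}(\psi)$, which is the claim.

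I do not expect a genuine obstacle. The only point to keep in mind is that, in contrast with Lemma~\ref{L:embed}, we neither need nor use the identity $\overline{Tr}(Tr_\phi(\psi)) = \psi$ — the output proof simply ends with $\overline{Tr}(\psi)$ on the nose. One should just confirm once more that each rewriting step above is closed under arbitrary substitution, so that it applies to the steps of an arbitrary $\mathbb{N}4\mathbb{CK}$-proof and not merely to a proof lying in the image of $Tr_\phi$; since all the axiom schemes and rules involved are schematic, this is automatic.
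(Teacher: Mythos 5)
Your proposal is correct and is exactly how the paper obtains Corollary~\ref{C:embed}: the paper simply observes that the second half of the proof of Lemma~\ref{L:embed} (translating an arbitrary $\mathbb{N}4\mathbb{CK}$-proof termwise by $\overline{Tr}$, with the same axiom-by-axiom and rule-by-rule bookkeeping) never needs the input formula to be of the form $Tr_\phi(\psi)$, the identity $\overline{Tr}(Tr_\phi(\psi))=\psi$ being used only to identify the last line. Your appeal to Theorem~\ref{T:completeness} to extract a $\mathbb{N}4\mathbb{CK}$-proof from $\psi\in\mathsf{N4CK}$ is the right justification and matches the paper's setup.
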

We are now in a position to prove the main result of this subsection:
\begin{proof}[Proof of Proposition \ref{P:embed}]
	If $\Gamma, \Delta\subseteq \mathcal{L}_{\Box}$, and  $\Gamma\models_{\mathsf{FSK}^d}\Delta$ then choose $\chi_1,\ldots,\chi_m \in \Delta$ and a deduction $\psi_1,\ldots,\psi_n = (\chi_1\vee\ldots\vee\chi_m)$ in $\mathsf{FSK}^d$ from premises in $\Gamma$. Consider the sequence $Tr_\phi(\psi_1),\ldots,Tr_\phi(\psi_n) = (Tr_\phi(\chi_1)\vee\ldots\vee Tr_\phi(\chi_m))$. By Lemma \ref{L:embed}, the application of $Tr_\phi$ brings formulas provable in $\mathsf{FSK}^d$ to formulas provable in $\mathsf{N4CK}$ and leaves intact the applications of \eqref{E:mp}. Therefore $Tr_\phi(\psi_1),\ldots,Tr_\phi(\psi_n)$ is a deduction of $Tr_\phi(\psi_n) = (Tr_\phi(\chi_1)\vee\ldots\vee Tr_\phi(\chi_m))$ in $\mathsf{N4CK}$ from premises in $Tr_\phi(\Gamma)$.
	
	In the other direction, let $\psi_1,\ldots,\psi_n = (Tr_\phi(\chi_1)\vee\ldots\vee Tr_\phi(\chi_m))$ be a deduction in $\mathsf{N4CK}$ from premises in $Tr_\phi(\Gamma)$. Consider the sequence of $\mathcal{L}_\Box$-formulas 
	$$
	\overline{Tr}(\psi_1),\ldots,\overline{Tr}(\psi_n) = \overline{Tr}(Tr_\phi(\chi_1)\vee\ldots\vee Tr_\phi(\chi_m)) = (\chi_1\vee\ldots\vee \chi_m),
	$$
	 where the last equality holds by the Claim in the proof of Lemma \ref{L:embed}.  By Corollary \ref{C:embed}, the application of $\overline{Tr}$ brings formulas provable in $\mathsf{N4CK}$ to formulas provable in $\mathsf{FSK}^d$ and leaves intact the applications of \eqref{E:mp}. Therefore $\overline{Tr}(\psi_1),\ldots,\overline{Tr}(\psi_n)$ is a deduction of $\overline{Tr}(\psi_n) =(\chi_1\vee\ldots\vee \chi_m)$ in $\mathsf{FSK}^d$ from premises in (again applying the Claim from the proof of  Lemma \ref{L:embed}) $\overline{Tr}(Tr_\phi(\Gamma)) = \Gamma$.
\end{proof}
Finally, set $\mathsf{IK}^{e+}:= \mathsf{L}(\mathcal{L}^{e+}_{(\Box,\Diamond)}, MInt^e, \models_m^i)$. The following proposition is well-known in the existing literature (cf. the proof of \cite[Proposition 7]{odintsovwansing}):
\begin{proposition}\label{P:modal-embed}
	Let the mapping $E^m:\mathcal{L}_\Box\to\mathcal{L}^{e+}_{(\Box,\Diamond)}$ be defined by induction on the construction of $\phi \in \mathcal{L}_\Box$, assuming all the clauses given in the definition of $E$ plus the following ones:
	\begin{align*}
		E^m(\Box\psi):= \Box(E^m(\psi));\\
		E^m(\sim\Box\psi):= \Diamond(E^m(\sim\psi))
	\end{align*}
Then for all $\Gamma, \Delta \subseteq \mathcal{L}_\Box$ it is true that $\Gamma \models_{\mathsf{FSK}^d} \Delta$ iff $E^m(\Gamma) \models_{\mathsf{IK}^{e+}} E^m(\Delta)$.
\end{proposition}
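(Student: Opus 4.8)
The plan is to follow the template of the proof of Proposition~\ref{P:prop-embedding}, replacing the two propositional model transformations used there with their modal analogues. The first step is to establish the counterpart of Claim~1: given $\mathcal{M} = (W, \leq, R, V^+, V^-)\in MNel$, one sets $\mathcal{M}^i := (W, \leq, R, V)$ with $V(p_i) := V^+(p_i)$ and $V(q_i) := V^-(p_i)$ for $i\in\omega$, and proves that $\mathcal{M}^i\in MInt^e$ and that $\mathcal{M}, w\models_m^+\phi$ iff $\mathcal{M}^i, w\models_m^i E^m(\phi)$ for all $w\in W$ and $\phi\in\mathcal{L}_\Box$. Membership in $MInt^e$ is immediate, since the conditions \eqref{Cond:1m} and \eqref{Cond:2m} restrict only $\leq$ and $R$, which are carried over unchanged, while $V$ inherits monotonicity from $V^+$ and $V^-$. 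The bi-conditional is proved by induction on $\phi$, treating $\phi$ and $\sim\phi$ simultaneously in each step (exactly as in the proof of Proposition~\ref{P:prop-embedding}, which is possible because $E^m(\sim\phi)$ is always built recursively from $E^m$-values of proper subformulas of $\phi$). The non-modal cases repeat that proof verbatim, and the two new cases are routine: for $\phi = \Box\psi$ one unfolds the $\models_m^+$-clause for $\Box$, applies the IH, and reassembles the $\models_m^i$-clause for $\Box$, obtaining $\mathcal{M}^i, w\models_m^i\Box E^m(\psi) = E^m(\phi)$; for $\phi = \sim\Box\psi$ one passes to $\mathcal{M}, w\models_m^-\Box\psi$, rewrites it via $\mathcal{M}, u\models_m^+\sim\psi$, applies the IH, and recognizes the result as $\mathcal{M}^i, w\models_m^i\Diamond E^m(\sim\psi) = E^m(\sim\Box\psi)$.

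The second step is the counterpart of Claim~2: given $\mathcal{M} = (W, \leq, R, V)\in MInt^e$, one sets $\mathcal{M}^{n4} := (W, \leq, R, V^+, V^-)$ with $V^+(p_i) := V(p_i)$ and $V^-(p_i) := V(q_i)$, and shows $\mathcal{M}^{n4}\in MNel$ together with $\mathcal{M}^{n4}, w\models_m^+\phi$ iff $\mathcal{M}, w\models_m^i E^m(\phi)$; this is entirely symmetric to the first step. The third and final step combines the two claims exactly as in the proof of Proposition~\ref{P:prop-embedding}: a pointed model witnessing $\mathcal{M}, w\models_{\mathsf{FSK}^d}(\Gamma,\Delta)$ yields, via $\mathcal{M}\mapsto\mathcal{M}^i$, one witnessing $\mathcal{M}^i, w\models_{\mathsf{IK}^{e+}}(E^m(\Gamma), E^m(\Delta))$, and conversely via $\mathcal{M}\mapsto\mathcal{M}^{n4}$; hence $\Gamma\models_{\mathsf{FSK}^d}\Delta$ iff $E^m(\Gamma)\models_{\mathsf{IK}^{e+}}E^m(\Delta)$.

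I do not anticipate a genuine obstacle: the whole construction is a direct transplant of Proposition~\ref{P:prop-embedding}, with the (unchanged) frame machinery of Definition~\ref{D:modal-model} carried straight through. The only point requiring mild care is bookkeeping --- keeping the simultaneous $\phi$/$\sim\phi$ induction organized so that the case $\phi = \sim\Box\psi$ has the IH available for $\sim\psi$, and verifying that the falsification clause for $\Box$ in $\mathsf{FSK}^d$ lines up with the $\Diamond$-clause of $\mathsf{IK}^{e+}$ under $E^m$. As the statement notes, this result is essentially \cite[Proposition~7]{odintsovwansing}.
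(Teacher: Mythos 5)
Your proposal matches the paper's own argument: the paper proves the proposition by exactly the two claims you describe (the $V(p_i):=V^+(p_i)$, $V(q_i):=V^-(p_i)$ passage from $MNel$ to $MInt^e$ and its converse, with the induction handled only in the modal cases), and then combines them to transfer satisfiability of $(\Gamma,\Delta)$ in both directions, just as in Proposition \ref{P:prop-embedding}. The proof is correct and essentially identical to the one in the paper.
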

\begin{proof}[Proof (a sketch)]
We argue as in the proof of Proposition \ref{P:prop-embedding}, first establishing the following claims:

\textit{Claim 1}. Let $\mathcal{M} = (W, \leq, R, V^+, V^-)\in MNel$. Then $\mathcal{M}^i = (W, \leq, R, V)\in MInt^e$  where, for an $i\in \omega$,  we set $V(p_i): = V^+(p_i)$ and $V(q_i) := V^-(p_i)$. Moreover, we have $
\mathcal{M}, w \models_m^+ \phi\text{ iff }\mathcal{M}^i, w \models_m^i E^m(\phi)$
for every $w \in W$ and every $\phi \in \mathcal{L}_\Box$.

The proof of Claim 1 is by induction for which we only consider the modal case.

Assume $\phi = \Box\psi$. Then $\mathcal{M}, w \models_m^+ \phi$ iff $(\forall v \geq w)(R(v,u)\text{ implies }\mathcal{M}, u \models_m^+ \psi)$ iff, by IH, $(\forall v \geq w)(R(v,u)\text{ implies }\mathcal{M}, u \models_m^i E^m(\psi))$ iff $\mathcal{M}^i, w \models_m^i E^m(\phi)$.

On the other hand, $\mathcal{M}, w \models_m^+ \sim\phi$ iff $(\exists u \in W)(R(w,u)\text{ and }\mathcal{M}, u \models_m^+ \sim\psi)$ iff, by IH, $(\exists u \in W)(R(w,u)\text{ and }\mathcal{M}^i, u \models_m^i E^m(\sim\psi))$ iff $\mathcal{M}^i, w \models_m^i \Diamond E^m(\sim\psi) = E^m(\sim\phi)$.  

\textit{Claim 2}. Let $\mathcal{M} = (W, \leq, R, V)\in MInt^e$. Then $\mathcal{M}^{n4} = (W, \leq, R, V^+, V^-)\in Nel$,  where, for an $i \in \omega$, we set $V^+(p_i): = V(p_i)$ and $V^-(p_i) := V(q_i)$, and, moreover, we have $\mathcal{M}^{n4}, w \models_m^+ \phi\text{ iff }\mathcal{M}, w \models_m^i E^m(\phi)$ for every $w \in W$ and every $\phi \in \mathcal{L}_\Box$.

The proof is similar to the proof of Claim 1.

Now, if $\mathcal{M}, w \models_{\mathsf{FSK}^d} (\Gamma, \Delta)$, then, by Claim 1, $\mathcal{M}^i, w\models_{\mathsf{IK}^{e+}}(E^m(\Gamma), E^m(\Delta))$. Conversely, if $\mathcal{M}, w\models_{\mathsf{IK}^{e+}}(E^m(\Gamma), E^m(\Delta))$, then, by Claim 2, $\mathcal{M}^{n4}, w\models_{\mathsf{FSK}^d} (\Gamma, \Delta)$.   
\end{proof}
Summing up the results reported in this paper, we get the following cascade of embeddings for every $\phi \in \mathcal{L}_{\boxto}$:
\begin{diagram}
	\mathsf{N4CK} &\rTo^{E^\pm,\,E^\mp} &\mathsf{IntCK}^{e+}\\
	\uTo^{Tr_\phi} &      & \uTo_{Tr^i_\phi}\\
\mathsf{FSK}^d &\rTo_{E^m} & \mathsf{IK}^{e+}
\end{diagram}
Note that the diagram does not commute irrespective of the choice between $E^\pm$ and $E^\mp$. Indeed, we have, for instance $E^\pm(Tr_{p_1}(\Box p_1)) = E^\pm(p_1\boxto p_1) = (p_1 \boxto (q_1 \boxto p_1))$ and $E^\mp(Tr_{p_1}(\Box p_1)) =  (q_1 \boxto (p_1 \boxto p_1))$, but $Tr^i_{p_1}(E^m(\Box p_1)) = Tr^i_{p_1}(\Box p_1) = p_1 \boxto p_1$. Thus the mappings $Tr_\phi\circ E^\pm$, $Tr_\phi\circ E^\mp$, and $E^m\circ Tr^i_\phi$ form a group of three pairwise different faithful embeddings of $\mathsf{FSK}^d$ into $\mathsf{IntCK}^{e+}$ for every $\phi \in \mathcal{L}_{\boxto}$.

Furthermore, it is easy to notice that we have both $Tr_\phi\circ E^+ = E^m\circ Tr^i_\phi$ and $Tr_\phi\circ E^- = E^m\circ Tr^i_{\sim\phi}$; it follows then, that even though $E^+$ and $E^-$ do not provide us with a faithful embedding of 
$\mathsf{N4CK}$ into $\mathsf{IntCK}^{e+}$, they remain faithful when restricted to $Tr_\phi$-images of modal formulas. 

\section{Conclusion, discussion, and future work}\label{S:conclusion}
We have introduced and axiomatized the Nelsonian paraconsistent conditional logic $\mathsf{N4CK}$, and we have looked into its relations to other conditional logics like $\mathsf{CK}$ and $\mathsf{IntCK}$ as well as its relations to the $\mathsf{N4}$-based modal logic $\mathsf{FSK}^d$. In doing so, we mainly focused on different faithful embeddings arising between the logics in question; when constructing these embeddings, we paid special attention to reproducing some of the characteristic properties of $\mathsf{N4}$ which is the reduct of $\mathsf{N4CK}$ to the purely propositional language. Another major source of analogies was provided by $\mathsf{IntCK}$: we used the relation between $\mathsf{IntCK}$ and $\mathsf{IK}$ in our argument that $\mathsf{FSK}^d$ is the correct modal companion for $\mathsf{N4CK}$ and we have also constructed a form of joint (although non-commuting) embedding  of the pair $\{\mathsf{FSK}^d, \mathsf{N4CK}\}$ into the positive fragment of the pair $\{\mathsf{IK}, \mathsf{IntCK}\}$.

However, one element is saliently missing in this web of faithful embeddings, and it is the embedding of $\mathsf{N4CK}$ into $\mathsf{QN4}$, the first-order variant of $\mathsf{N4}$. Indeed, both $\mathsf{IK}$ and $\mathsf{IntCK}$ have been shown to be embeddable into $\mathsf{QInt}$, the first-order intuitionistic logic, and the embedding of $\mathsf{FSK}^d$ into $\mathsf{QN4}$ was shown to be faithful in \cite{odintsovwansing} by a remarkably simple argument which basically fed the negation normal forms arising in $\mathsf{FSK}^d$ into the composition of $E^m$ and the embedding of $\mathsf{IK}$ into $\mathsf{QInt}$.

Whereas this method can also be used in the case of $\mathsf{N4CK}$, it only gives one of the possible faithful embeddings of $\mathsf{N4CK}$ into $\mathsf{QN4}$; yet, other faithful embeddings of this sort are also possible, and one may argue that the embedding arising in this way is neither the most natural nor the most interesting one. The whole question of mapping possible embeddings of $\mathsf{N4CK}$ into $\mathsf{QN4}$, therefore, deserves a separate treatment, and we hope to be able to address it in one of our publications in the nearest future.

On the other hand, it is also interesting to look at other conditional logics based on $\mathsf{N4}$, not just at $\mathsf{N4CK}$. We have already said a thing or two about $\mathsf{N4CK}'$ in this paper, but other promising extensions of $\mathsf{N4CK}$ are clearly possible. We find it especially interesting to look into the extensions of $\mathsf{N4CK}$ which realize different principles of connexive logics (see, e.g. \cite{omw} for an overview) and to see how far can one push the limits of the Nelsonian condional operator in this direction; we hope to do just this in another of our future papers.

Yet another direction for the future research is to look into reproducing the results obtained both in \cite{olkhovikov} and in this paper for other constructive conditional logics, especially the ones based on propositional logics like the original Nelson's logic $\mathsf{N3}$ and the negation-inconsistent connexive logic $\mathsf{C}$, introduced by H. Wansing in \cite{w}.

\medskip
\textbf{Acknowledgements}. This research has received funding from the European Research Council (ERC) under the European Union's Horizon 2020 research and innovation programme, grant agreement ERC-2020-ADG, 101018280, ConLog.

\appendix

\section{Proof of Lemma \ref{L:theorems}}\label{A:1}
We can derive \eqref{E:RCbox} by combining \eqref{E:RCbox1} and \eqref{E:RCbox2}. As for the other items, we sketch the respective proofs and derivations in $\mathbb{N}4\mathbb{CK}$:
\begin{align}
	\eqref{E:Rnec}:\qquad\phi\label{E:p1}& &&\text{premise}\\
	\phi& \leftrightarrow (\phi\to\phi)\label{E:p3}&&\text{\eqref{E:p1}, $\mathsf{N4}$}\\
	(\psi&\boxto\phi)\leftrightarrow(\psi\boxto(\phi\to\phi))\label{E:p4}&&\text{\eqref{E:p3}, \eqref{E:RCbox1}}\\
	\psi&\boxto\phi\label{E:p5}&&\text{\eqref{E:p4}, \eqref{E:a4}, $\mathsf{N4}$}
\end{align}
\begin{align}
	\eqref{E:Rmbox}:\qquad\phi&\to\psi\label{E:p6} &&\text{premise}\\
	(\phi&\wedge\psi)\leftrightarrow \phi\label{E:p7}&&\text{\eqref{E:p6}, $\mathsf{N4}$}\\
	(\chi&\boxto(\phi\wedge\psi))\leftrightarrow(\chi\boxto\phi)\label{E:p8}&&\text{\eqref{E:p7}, \eqref{E:RCbox1}}\\
	(\chi&\boxto\phi)\to(\chi\boxto\psi)\label{E:p10}&&\text{\eqref{E:p8},  \eqref{E:a1}, $\mathsf{N4}$}
\end{align}
\begin{align}
	\eqref{E:Rnbox}:\quad\sim\phi&\to\sim\psi\label{E:q0} &&\text{premise}\\
	\sim(\phi&\wedge\psi)\leftrightarrow \sim\psi\label{E:q1}&&\text{\eqref{E:q0}, $\mathsf{N4}$}\\
	\sim(\chi&\boxto(\phi\wedge\psi))\leftrightarrow\sim(\chi\boxto\psi)\label{E:q2}&&\text{\eqref{E:q1}, \eqref{E:RCbox2}}\\		\sim((\chi&\boxto\phi)\wedge(\chi\boxto\psi))\leftrightarrow\sim(\chi\boxto\psi)\label{E:q3}&&\text{\eqref{E:q2}, \eqref{E:a1}, $\mathsf{N4}$}\\
	(\sim(\chi&\boxto\phi)\vee\sim(\chi\boxto\psi))\leftrightarrow\sim(\chi\boxto\psi)\label{E:q4}&&\text{\eqref{E:q3}, $\mathsf{N4}$}\\
	\sim(\chi&\boxto\phi)\to\sim(\chi\boxto\psi)&&\text{\eqref{E:q4}, $\mathsf{N4}$}
\end{align}
\begin{align}
	\eqref{E:Rmdiam}:\quad\phi&\to\psi\label{E:qa0} &&\text{premise}\\
	\sim\sim\phi&\to\sim\sim\psi\label{E:qa1}&&\text{\eqref{E:qa0}, $\mathsf{N4}$}\\
	(\chi&\diamondto\phi)\to(\chi\diamondto\psi)&&\text{\eqref{E:qa1}, \eqref{E:Rnbox}}
\end{align}
\begin{align}
	\eqref{E:T1}:\qquad(\psi&\wedge(\psi\to\chi))\to\chi\label{E:q5}&&\text{$\mathsf{N4}$}\\
	(\phi&\boxto(\psi\wedge(\psi\to\chi)))\to(\phi\boxto\chi)\label{E:q6}&&\text{\eqref{E:q5}, \eqref{E:Rmbox}}\\
	((\phi&\boxto\psi)\wedge(\phi\boxto(\psi\to\chi)))\to(\phi\boxto\chi)&&\text{\eqref{E:a1}, \eqref{E:q6}, $\mathsf{N4}$}
\end{align}
\begin{align}
\eqref{E:T2}:\,	(\sim(\phi&\boxto\psi)\wedge(\phi\boxto(\sim\psi\to\sim\chi)))\to\notag\\
&\qquad\qquad\to\sim(\phi\boxto(\psi\vee\sim(\sim\psi\to\sim\chi)))\label{E:q7} &&\text{\eqref{E:a2}}\\
	\sim(\psi&\vee\sim(\sim\psi\to\sim\chi))\to\sim\chi\label{E:q8}&&\text{$\mathsf{N4}$}\\
	\sim(\phi&\boxto(\psi\vee\sim(\sim\psi\to\sim\chi)))\to\sim(\phi\boxto\chi)\label{E:q9}&&\text{\eqref{E:q8}, \eqref{E:Rnbox}}\\
	(\sim(\phi&\boxto\psi)\wedge(\phi\boxto(\sim\psi\to\sim\chi)))\to\sim(\phi\boxto\chi)&&\text{\eqref{E:q7}, \eqref{E:q9}, $\mathsf{N4}$}
\end{align}
\begin{align}
	\eqref{E:T3}:\quad(\sim(\phi&\boxto\sim(\psi\to \sim\chi))\wedge(\phi\boxto\psi))\to\notag\\
	&\qquad\qquad\to\sim(\phi\boxto(\sim(\psi\to \sim\chi)\vee\sim\psi))\label{E:r0} &&\text{\eqref{E:a2}}\\
	\sim(\sim&(\psi\to \sim\chi)\vee\sim\psi)\to\sim\chi\label{E:r1}&&\text{$\mathsf{N4}$}\\
	\sim(\phi&\boxto(\sim(\psi\to \sim\chi)\vee\sim\psi))\to \sim(\phi\boxto\chi)\label{E:r2}&&\text{\eqref{E:r1}, \eqref{E:Rnbox}}\\
	(\sim(\phi&\boxto\sim(\psi\to \sim\chi))\wedge(\phi\boxto\psi))\to\sim(\phi\boxto\chi)&&\text{\eqref{E:r0}, \eqref{E:r1}, $\mathsf{N4}$}
\end{align}
\begin{align}
	\eqref{E:T4}:\quad\sim(\phi&\boxto(\sim\psi\wedge \sim\chi))\to\sim((\phi\boxto\sim\psi)\wedge (\phi\boxto\sim\chi))\label{E:ra0}\qquad\text{\eqref{E:a1}}\\
	\sim((\phi&\boxto\sim\psi)\wedge (\phi\boxto\sim\chi))\to((\phi\diamondto\psi)\vee(\phi\diamondto\chi))\label{E:ra1}\quad\,\,\text{\eqref{E:a0.2}}\\
	\sim\sim(&\psi\vee\chi)\to\sim(\sim\psi\wedge\sim\chi)\label{E:ra2}\qquad\qquad\qquad\qquad\qquad\qquad\qquad\text{$\mathsf{N4}$}\\
	(\phi&\diamondto(\psi\vee\chi))\to\sim(\phi\boxto(\sim\psi\wedge \sim\chi))\label{E:ra3}\qquad\qquad\quad\text{\eqref{E:ra2}, \eqref{E:Rnbox}}\\
	(\phi&\diamondto(\psi\vee\chi))\to((\phi\diamondto\psi)\vee(\phi\diamondto\chi))\qquad\qquad\text{\eqref{E:ra0}, \eqref{E:ra1}, \eqref{E:ra3}}
\end{align}	
\begin{align}
	\eqref{E:T5}:\quad\psi&\to(\sim\sim(\psi\to\chi)\to\sim\sim\chi)\label{E:rb0} &&\text{$\mathsf{N4}$}\\
	(\phi&\boxto\psi)\to(\phi\boxto(\sim\sim(\psi\to\chi)\to\sim\sim\chi))\label{E:rb1} &&\text{\eqref{E:rb0}, \eqref{E:Rmbox}}\\
	(\phi&\boxto(\sim\sim(\psi\to\chi)\to\sim\sim\chi))\to\notag\\
	&\qquad\qquad\to((\phi\diamondto(\psi\to\chi))\to (\phi\diamondto\chi))\label{E:rb2} &&\text{\eqref{E:T2}}\\
	(\phi&\boxto\psi)\to((\phi\diamondto(\psi\to\chi))\to (\phi\diamondto\chi)) &&\text{\eqref{E:rb1}, \eqref{E:rb2}, $\mathsf{N4}$}
\end{align}	
\begin{align}
	\eqref{E:T6}:\quad\sim\psi&\leftrightarrow\sim\sim\sim\psi\label{E:rb3} &&\text{\eqref{E:a0.1}}\\
	\sim(\phi&\boxto\psi)\leftrightarrow(\phi\diamondto\sim\psi) &&\text{\eqref{E:rb3}, \eqref{E:RCbox2}}
\end{align}

\section{Proof of Lemma \ref{L:CK}}\label{A:2}
	(Part 1) Note that \eqref{E:a0.1}--\eqref{E:a0.4} are clearly valid in $\mathsf{CL}$. Moreover, \eqref{E:Rmbox} and \eqref{E:T1} can be deduced in $\mathsf{CK}$ in the same way as in $\mathsf{N4CK}$. We sketch the proofs for the remaining axioms and inference rules:
	\begin{align}
		\eqref{E:a2}:\quad&(\phi\boxto\chi)\to(\phi\boxto(\neg(\psi\wedge \chi)\to\neg\psi))\label{E:ck2}&&\text{$\mathsf{CL}$,\eqref{E:Rmbox}}\\
		&(\phi\boxto\chi)\to((\phi\boxto\neg(\psi\wedge \chi))\to(\phi\boxto\neg\psi))\label{E:ck3}&&\text{\eqref{E:ck2},\eqref{E:T1},$\mathsf{CL}$}\\
		&(\phi\boxto\chi)\to(\neg(\phi\boxto\neg\psi)\to\neg(\phi\boxto\neg(\psi\wedge \chi)))\label{E:ck4}&&\text{\eqref{E:ck3},$\mathsf{CL}$}\\
		&(\phi\boxto\chi)\to((\phi\diamondto\psi)\to(\phi\diamondto(\psi\wedge \chi)))&&\text{\eqref{E:ck4},$\mathsf{CL}$}
	\end{align} 
	\begin{align}
		\eqref{E:a3}:\quad&\sim(\phi\boxto\sim\psi)\to(\phi\boxto\chi)\label{E:ck5}&&\text{premise}\\
		&(\phi\boxto\sim\psi)\to(\phi\boxto(\psi\to\chi))\label{E:ck8}&&\text{$\mathsf{CL}$,\eqref{E:Rmbox}}\\
		&(\phi\boxto\chi)\to(\phi\boxto(\psi\to\chi))\label{E:ck9}&&\text{$\mathsf{CL}$,\eqref{E:Rmbox}}\\
		&\sim(\phi\boxto\sim\psi)\to(\phi\boxto(\psi\to\chi))\label{E:ck11}&&\text{\eqref{E:ck5},\eqref{E:ck9},$\mathsf{CL}$}\\
		&((\phi\boxto\sim\psi)\vee\sim(\phi\boxto\sim\psi))\to(\phi\boxto(\psi\to\chi))\label{E:ck12}&&\text{\eqref{E:ck8},\eqref{E:ck11},$\mathsf{CL}$}\\
		&\phi\boxto(\psi\to\chi)\label{E:ck14}&&\text{\eqref{E:ck12},$\mathsf{CL}$}
	\end{align}
	\begin{align}
		\eqref{E:RCbox2}:\quad&\sim\phi\leftrightarrow\sim\psi \in \mathsf{CK}\label{E:kc4}&&\text{premise}\\
			&\phi\leftrightarrow\psi \in \mathsf{CK}\label{E:kc5}&&\text{\eqref{E:kc4}, $\mathsf{CL}$}\\
		&(\chi\boxto\phi)\leftrightarrow(\chi\boxto\psi)\in \mathsf{CK}\label{E:kc6}&&\text{\eqref{E:kc5},\eqref{E:RCbox1}}\\
		&\sim(\chi\boxto\phi)\leftrightarrow\sim(\chi\boxto\psi)\in \mathsf{CK}&&\text{\eqref{E:kc6}, $\mathsf{CL}$}
	\end{align}
	Having now every element of $\mathbb{N}4\mathbb{CK}$ deduced in $\mathsf{CK}$, we can deduce the remaining parts of Lemma \ref{L:theorems} as it was done in Section \ref{sub:axiomatization}.
	
\section{Proof of Lemma \ref{L:theorems-m}}\label{A:3}
\begin{align}
	\eqref{E:rmnbox}:\qquad\sim\phi&\to\sim\psi\label{E:ma2} &&\text{premise}\\
	\Diamond\sim\phi&\to\Diamond\sim\psi\label{E:ma3}&&\text{\eqref{E:ma2}, \eqref{E:rmdiam}}\\
	\sim\Box\phi&\to\sim\Box\psi&&\text{\eqref{E:ma3}, \eqref{E:aa6}}
\end{align}
\begin{align}
	\eqref{E:t2}:\qquad\Box(&\phi\wedge\psi)\to(\Box\phi\wedge\Box\psi)\label{E:ma4} &&\text{\eqref{E:rmbox}, $\mathsf{N4}$}\\
	\sim\Box&\phi\to\sim\Box(\phi\wedge\psi)\label{E:ma5}&&\text{\eqref{E:rmnbox}, $\mathsf{N4}$}\\
	\sim\Box&\psi\to\sim\Box(\phi\wedge\psi)\label{E:ma6}&&\text{\eqref{E:rmnbox}, $\mathsf{N4}$}\\
	\sim(\Box&\phi\wedge\Box\psi)\to\sim\Box(\phi\wedge\psi)\label{E:ma7}&&\text{\eqref{E:ma5}, \eqref{E:ma6}, $\mathsf{N4}$}\\
	\Diamond(\sim&\phi\vee\sim\psi)\to(\Diamond\sim\phi\vee\Diamond\sim\psi)\label{E:ma8}&&\text{\eqref{E:aa3}}\\
	\sim(\phi&\wedge\psi)\leftrightarrow\sim\sim(\sim\phi\vee\sim\psi)\label{E:ma9} &&\text{$\mathsf{N4}$}\\
	\sim\Box(&\phi\wedge\psi)\leftrightarrow\Diamond(\sim\phi\vee\sim\psi)\label{E:ma10} &&\text{\eqref{E:ma9}, \eqref{E:rnbox}}\\
	(\Diamond\sim\phi&\vee\Diamond\sim\psi)\to(\sim\Box\phi\vee\sim\Box\psi)\label{E:ma11} &&\text{\eqref{E:aa6}, $\mathsf{N4}$}\\
	(\Diamond\sim\phi&\vee\Diamond\sim\psi)\to\sim(\Box\phi\wedge\Box\psi)\label{E:ma12} &&\text{\eqref{E:ma11}, $\mathsf{N4}$}\\
	\sim\Box(&\phi\wedge\psi)\to\sim(\Box\phi\wedge\Box\psi)\label{E:ma13} &&\text{\eqref{E:ma8}, \eqref{E:ma10}, \eqref{E:ma12}, $\mathsf{N4}$}\\
	\Box(\phi&\wedge\psi)\Leftrightarrow(\Box\phi\wedge\Box\psi)\label{E:ma14} &&\text{\eqref{E:aa1}, \eqref{E:ma4}, \eqref{E:ma7}, \eqref{E:ma13}, $\mathsf{N4}$}
\end{align}
\begin{align}
	\eqref{E:t3}:\qquad(\sim\Box&\phi\wedge\Box\psi)\to(\Diamond\sim\phi\wedge\Box\psi)\label{E:mb1} &&\text{\eqref{E:aa6}, $\mathsf{N4}$}\\
	\sim&\phi\to(\psi\to\sim(\phi\vee\sim\psi))\label{E:mb2}&&\text{$\mathsf{N4}$}\\
	\Diamond\sim&\phi\to\Diamond(\psi\to\sim(\phi\vee\sim\psi))\label{E:mb3}&&\text{\eqref{E:mb2}, \eqref{E:rmdiam}}\\
	(\Diamond\sim&\phi\wedge\Box\psi)\to (\Diamond(\psi\to\sim(\phi\vee\sim\psi))\wedge\Box\psi)\label{E:mb4}&&\text{\eqref{E:mb3}, $\mathsf{N4}$}\\
	(\Diamond\sim&\phi\wedge\Box\psi)\to\Diamond\sim(\phi\vee\sim\psi)\label{E:mb5}&&\text{\eqref{E:aa4}, \eqref{E:mb4}, $\mathsf{N4}$}\\
	\Diamond\sim(&\phi\vee\sim\psi)\to\sim\Box(\phi\vee\sim\psi)\label{E:mb6}&&\text{\eqref{E:aa6}}\\
	(\sim\Box&\phi\wedge\Box\psi)\to\sim\Box(\phi\vee\sim\psi)&&\text{\eqref{E:mb1}, \eqref{E:mb5}, \eqref{E:mb6}}
\end{align}
}
\end{document}